\newcommand{\PreserveBackslash}[1]{\let\temp=\\#1\let\\=\temp}
\newcolumntype{C}[1]{>{\PreserveBackslash\centering}p{#1}}
\newcolumntype{R}[1]{>{\PreserveBackslash\raggedleft}p{#1}}
\newcolumntype{L}[1]{>{\PreserveBackslash\raggedright}p{#1}}
\DeclareMathOperator*{\argmax}{arg\,max}
\newtheorem{claim}{Claim}
\newcommand{\reals}{\mathbb{R}}
\newcommand{\comment}[1]{}
\newcommand{\bsum}[2]{\sum\limits_{#1}^{#2}}
\newcommand{\N}{\mathbb{N}}
\newcommand{\R}{\mathbb{R}}
\newcommand{\DiffPL}{\Gamma_{\operatorname{PL}}}
\newcommand{\DiffSmooth}{\Gamma_{\infty}}
\newcommand{\DiffLip}{\Gamma_{\operatorname{Lip}}}
\newcommand{\Imm}{\operatorname{Imm}}
\newcommand{\ImmLip}{\operatorname{Imm}_{\operatorname{Lip}}}
\newcommand{\Diff}{\operatorname{Diff}}
\subjclass[2020]{49Q10, 49Q22}
\title[SRNF Distance and Unbalanced Optimal Transport]{The Square Root Normal Field Distance and Unbalanced Optimal Transport}
\author{Martin Bauer, Emmanuel Hartman, Eric Klassen}
\thanks{Martin Bauer and Emmanuel Hartman were partially supported by NSF-grants 1912037 and 1953244.}
\begin{document}

\maketitle
\begin{abstract}
This paper explores a novel connection between two areas: shape analysis of surfaces and unbalanced optimal transport. Specifically, we characterize the square root normal field (SRNF) shape distance as the pullback of the Wasserstein-Fisher-Rao (WFR) unbalanced optimal transport distance. In addition we propose a new algorithm for computing the WFR distance and present numerical results that highlight the effectiveness of this algorithm. As a consequence of our results we obtain a precise method for computing the SRNF shape distance directly on piecewise linear surfaces and gain new insights about the degeneracy of this distance.
\end{abstract}

\tableofcontents
\addtocontents{toc}{\protect\setcounter{tocdepth}{1}}

\section{Introduction}
This paper contributes to two different areas: elastic shape analysis (ESA)~\cite{srivastava2016functional} and unbalanced optimal mass transport~\cite{chizat2018scaling,liero2018optimal}. The main results of our article are twofold: first we develop a new algorithm for the numerical computation of the Wasserstein-Fisher-Rao distance~\cite{liero2018optimal,chizat2018interpolating,kondratyev2016new} (a form of unbalanced optimal mass transport), and secondly we establish a connection between these two areas, which in turn allows for the exact computation of the Square Root Normal Field distance, which is a widely used similarity measure in ESA of surfaces; see e.g.~\cite{jermyn2012elastic,laga2017numerical,laga20183d,jermyn2017elastic} and the references therein. 
Before we describe the contributions of the present article in more detail, we will briefly discuss the background of these two fields.

\subsection*{Background:}
In mathematical shape analysis, one is interested in quantifying and describing the differences between geometric objects, such as point clouds, geometric curves, or unparametrized surfaces~\cite{younes2010shapes,bauer2014overview,srivastava2016functional,dryden2016statistical}. The main sources of difficulty in this area are the high (infinite) dimensionality and the non-linearity of such spaces; e.g., the shape space of surfaces is an (infinite dimensional) function space modulo several finite and infinite dimensional group actions. Consequently, even simple operations such as addition or averaging are not well-defined on such spaces. Riemannian geometry has been proven to provide a successful framework to tackle this challenging task: in a Riemannian viewpoint, one considers the space of all shapes of interest (geometric objects) as an infinite dimensional manifold and equips it with an (infinite dimensional) Riemannian metric, thereby encoding the invariances of the objects in the geometry and building a convenient setup for subsequent statistical analysis. In the context of geometric curves or surfaces this approach is often referred to as Elastic Shape Analysis (ESA)~\cite{srivastava2016functional,younes1998computable}, despite the analogy to elastic stretching and bending energies being only loose~\cite{rumpf2014geometry, rumpf2015variational}.

In this article we will focus on elastic shape analysis of surfaces, i.e., we consider Riemannian metrics on the quotient space $\Imm(M,\mathbb R^3)/\Diff(M)$ of immersions modulo reparametrizations, where $M$ is a compact two dimensional manifold (the parameter space), $\Imm(M,\mathbb R^3)$ denotes the space of immersions of $M$ into $\mathbb R^3$ and $\Diff(M)$ is the diffeomorphism group of the parameter space. One can define a Riemannian metric on the quotient spaces, by considering a reparametrization invariant metric on the space of immersions, such that the projection is a Riemannian submersion. Over the past years there has been a significant amount of work dedicated to studying the mathematical properties of such metrics and in particular sufficient conditions to guarantee non-degeneracy of the geodesic distance~\cite{michor2005vanishing} and local wellposedness~\cite{bauer2011sobolev} of the geodesic equations have been derived. However, the analogs of the global existence and completeness results, that have been derived in the case of planar curves~\cite{bruveris2014geodesic,bruveris2016optimal,lahiri2015precise} are still missing. From an application point of view, the most important task is a fast and robust implementation of the geodesic boundary value problem, which in the setup of geometric statistics serves as the basis for any subsequent statistical analysis~\cite{pennec2006intrinsic}.
In general, the absence of explicit formulas for geodesics makes this a highly non-trivial task. Motivated by similar results in the case of geometric curves, several simplifying transformations have been proposed that locally flatten the Riemannian metric~\cite{kurtek2010novel,kurtek2011elastic,jermyn2017elastic}. The most successful among these is the so-called Square Root Normal Field (SRNF) transformation~\cite{jermyn2017elastic}, which assigns an invariant (pseudo) distance to the space of immersions by considering the $L^2$-distance between appropriately weighted normal vector fields. The corresponding shape distance can then be calculated by minimizing over the action of the reparametrization group, see Section~\ref{subsec:SRNF} for an exact definition of this framework. Based on the resulting computational ease and convincing results~\cite{laga2017numerical,bauer2020numerical}, the SRNF framework has been proven successful in a variety of applications, see e.g.~\cite{kurtek2014statistical,joshi2016surface,matuk2020biomedical,laga20214d}. In a recent paper~\cite{klassen2020closed} certain degeneracy results for the resulting distance have been characterized, but a more detailed theoretical study of its properties is still missing; the second part of this article will contribute towards this aim. 

The optimal mass transport (OMT) problem was first formulated as a non-convex optimization problem on the space of transport maps by Monge in 1781~\cite{monge1781memoire}. Since then a large amount of work has been dedicated to gaining a better theoretical understanding of this challenging model; we refer to the monographs~\cite{villani2008optimal,villani2003topics} for a detailed introduction to the field.
Over the past years OMT has proven successful in a variety of applications, ranging from Computer Vision to Image Analysis and in particular Statistics and Data Science, see e.g.~\cite{rubner2000earth,haker2004optimal,solomon2014wasserstein,solomon2015transportation,peyre2019computational} and the reference therein. Fueled by these applications efficient numerical discretizations of OMT have been developed~\cite{peyre2019computational,benamou2000computational,merigot2011multiscale,burkard2012assignment} including in particular the celebrated Sinkhorn algorithm~\cite{cuturi2013sinkhorn,sinkhorn1964relationship}, which efficiently solves an entropic regularized version of the OMT problem.  

The original formulation of OMT is rooted in the assumption that both densities have the same total mass. Motivated by applications, where this can be a limiting factor, various formulations of OMT that lift this restriction have been proposed~\cite{piccoli2014generalized,maas2015generalized,kondratyev2016new,liero2018optimal,chizat2018interpolating}; such transportation problems are also called unbalanced transport problems. In particular, a new family of metrics that interpolates between the Wasserstein and the Fisher-Rao metric has been introduced in~\cite{liero2018optimal,chizat2018interpolating,kondratyev2016new}. The theoretical properties of this model, called Wasserstein-Fisher-Rao distance (WFR) or Hellinger-Kantorovich distance, have been studied in detail in~\cite{liero2016optimal,chizat2018unbalanced,liero2018optimal} and, as with traditional optimal transport, efficient Sinkhorn-type, entropy regularized methods have been introduced in~\cite{chizat2018scaling}.

\subsection*{Contributions:}
We start our presentation by 
reviewing the Kantorovich formulation of the WFR distance, where we will focus on the induced distance on the subspace of all finitely supported measures on $S^2$. In this setting the computation of the WFR distance reduces to a convex optimization problem on the space of discrete semi-couplings, i.e., on the space of pairs of constrained matrices, see~Section~\ref{sec:finmeasures} and Lemma~\ref{ATilde}. In Section~\ref{flatness}, we prove that the set of measures on $S^2$ with a fixed number of support points is a locally flat metric space with respect to the WFR metric. This result is an extension of a result in \cite{chizat2018interpolating}, in which the same result is proved for measures on convex Euclidean domains instead of $S^2$. In Section~\ref{algorithm} we use the formulations of Section~\ref{sec:finmeasures} to develop an efficient coordinate descent algorithm, whose convergence is ensured by the convexity of the problem. We can find an explicit solution to the optimization problem when restricted to the space of semi-couplings with a fixed first (second, resp.) matrix, cf. Lemma~\ref{decreasing}. This gives rise to a simple, numerically efficient algorithm to compute the optimal semi-coupling. In Section~\ref{implementation} we then present an open source pytorch\footnote{Our code is available at~\url{https://github.com/emmanuel-hartman/WassersteinFisherRaoDistance}.} implementation of this algorithm and compare it in several experiments to the entropic regularized Sinkhorn solver of~\cite{chizat2018scaling}. 
 
In the second part of our article, we focus on the SRNF distance. We start by presenting a unified framework for the SRNF distance that allows us to incorporate both smooth and piecewise linear surfaces (simplicial complexes). We then show that this extended framework coincides with the original SRNF distance when restricted to smooth surfaces, cf.~Theorem~\ref{thm:PLequiv}.  
The main contribution of the second part, which establishes a connection between the SRNF distance on the space of unparametrized surfaces and the Wasserstein-Fisher-Rao distance on the space of Borel measures on $S^2$, is presented next; more precisely, we construct a map from the space of piecewise linear surfaces to the space of finitely supported Borel measures on $S^2$, such that the SRNF shape distance is the pullback of the Wasserstein-Fisher-Rao distance via this map, see~Theorem~\ref{thm:maintheorem}. 
The central building block of this result is related to the theory of area measures~\cite{alexandrov1938theorie}, which have a long history in convex geometry and in particular in Brunn-Minkowski theory, cf.~\cite{schneider2014convex} and the references therein. In the context of shape analysis of curves, area measures have been recently studied in~\cite{charon2020length}.

Theorem~\ref{thm:maintheorem} highlights the degeneracy of the SRNF distance: in the recent paper~\cite{klassen2020closed} it has been shown that there exist families of non-equivalent closed surfaces that are indistinguishable by the SRNF distance. Our result shows that for any closed surface, there exists a unique convex surface such that the SRNF distance cannot distinguish them. It turns out that this convex surface is exactly the solution of the well-known Minkowski problem~\cite{schneider1993convex,sellaroli2017algorithm}, which allows us to use algorithms of convex geometry to present 
examples of such pairs of surfaces that are indistinguishable by the SRNF, cf.~Fig.~\ref{fig:ex_conv}.

As a second outcome of Theorem~\ref{thm:maintheorem}, we obtain a new algorithm for the precise SRNF distance computation by reducing it to the solution of the Wasserstein-Fisher-Rao distance. Thus the algorithm developed in the first part of the paper directly applies to this situation. In the final section of this article, we present numerical experiments and, in particular, a comparison to previous implementations of the SRNF distance.

\subsection*{Acknowledgements.}
We thank Nicolas Charon, Ian Jermyn,  Cy Maor, Zhe Su, François-Xavier Vialard, and the statistical shape analysis group at FSU for helpful discussions during the preparation of this manuscript. 

\addtocontents{toc}{\protect\setcounter{tocdepth}{2}}
\section{Unbalanced optimal transport and the Wasserstein-Fisher-Rao distance}\label{SRMM}
In this section, we will first recall the Kantorovich formulation of the recently proposed Wasserstein-Fisher-Rao distance. We will then discuss the restriction of this distance to the space of finitely supported measures on $S^2$. In our main result of this section, we will construct an efficient splitting algorithm for the computation of this distance. We will prove the convergence of our algorithm using the result that the computation of this distance can be reduced to optimizing a concave function over a finite-dimensional convex set.

\subsection{The Wasserstein-Fisher-Rao distance}\label{SRMMDef}
In recent years, there has been a concerted effort by the optimal transport community to extend the definition of well-studied classical optimal transport to unbalanced problems, i.e. to transport problems that allow for expansion and compression of mass.
We will consider a specific example of such a generalization called the Wasserstein-Fisher-Rao distance that was introduced independently by~\cite{chizat2018interpolating} and~\cite{liero2018optimal}. In the following, we will discuss the corresponding Kantorovich formulation, as introduced in~\cite{chizat2018unbalanced}, for the special case of measures on $S^2$.

Therefore we denote by $\mathcal{M}_+(S^2)$ the space of finite Borel measures on $S^2$. To formulate the Kantorovich problem for unbalanced transport we introduce the notion of a semi-coupling, which is a direct generalization of the notion of a  coupling, which is used in standard OMT:
\begin{definition}[Semi-couplings~\cite{chizat2018unbalanced}]
Given $\mu,\nu\in\mathcal{M}_+(S^2)$ the set of all \textit{semi-couplings} is given by
\begin{equation}
    \Gamma(\mu,\nu)=\left\{ (\gamma_0,\gamma_1)\in \mathcal{M}_+(S^2\times S^2)^2| (\operatorname{Proj}_0)_\#\gamma_0=\mu,(\operatorname{Proj}_1)_\#\gamma_1=\nu\right\}.
\end{equation}
\end{definition}  
\noindent The Wasserstein-Fisher-Rao distance from $\mu$ to $\nu$ can be defined as the infimum of a functional on the space of semi-couplings of $\mu$ and $\nu$.
\begin{definition}[Wasserstein-Fisher-Rao Distance ~\cite{chizat2018unbalanced,liero2018optimal}]
The Wasserstein-Fisher-Rao Distance on $\mathcal{M}_+(S^2)$ is given by \begin{align}
    &\operatorname{WFR}:\mathcal{M}_+(S^2)\times\mathcal{M}_+(S^2)\to\R^+ \text{ defined via }\\
    &(\mu,\nu)\mapsto \inf\limits_{(\gamma_0,\gamma_1)\in \Gamma(\mu,\nu)}\left(\int_{S^2\times S^2}\left|\sqrt{\frac{\gamma_0}{\gamma}(u,v)}u-\sqrt{\frac{\gamma_1}{\gamma}(u,v)}v\right|^2 d\gamma(u,v)\right)^{1/2}
\end{align}
where $\gamma\in \mathcal{M}_+(S^2\times S^2)$ such that $\gamma\ll\gamma_0,\gamma_1$. 
\end{definition}
The following theorem summarizes the main result about this distance function:
\begin{theorem}[Properties of the Wasserstein-Fisher-Rao Distance~\cite{chizat2018interpolating,chizat2018unbalanced,liero2018optimal}]
Given any $\mu,\nu\in\mathcal{M}_+(S^2)$ there exists an optimal $(\gamma_0,\gamma_1)\in \Gamma(\mu,\nu)$ such that \begin{equation}
   \operatorname{WFR}(\mu,\nu)^2=\int_{S^2\times S^2}\left|\sqrt{\frac{\gamma_0}{\gamma}(u,v)}u-\sqrt{\frac{\gamma_1}{\gamma}(u,v)}v\right|^2 d\gamma(u,v).
\end{equation} Further,  $(\mathcal{M}_+(S^2),\operatorname{WFR})$ is a geodesic length space. 
\end{theorem}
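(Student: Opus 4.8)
Both assertions are established in \cite{chizat2018interpolating,chizat2018unbalanced,liero2018optimal}; here I sketch the strategy I would follow. For the existence of an optimal semi-coupling the plan is the direct method in the calculus of variations. First I would record that the functional
\[
\mathcal{F}(\gamma_0,\gamma_1):=\int_{S^2\times S^2}\left|\sqrt{\frac{\gamma_0}{\gamma}(u,v)}\,u-\sqrt{\frac{\gamma_1}{\gamma}(u,v)}\,v\right|^2\,d\gamma(u,v)
\]
is independent of the dominating measure $\gamma$: writing $\gamma_i=\sigma_i\gamma$, the integrand $f_{u,v}(\sigma_0,\sigma_1)=|\sqrt{\sigma_0}\,u-\sqrt{\sigma_1}\,v|^2$ is positively $1$-homogeneous in $(\sigma_0,\sigma_1)$, so rescaling $\gamma\mapsto h\gamma$, $\sigma_i\mapsto\sigma_i/h$, leaves the integral unchanged. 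Using $|u|=|v|=1$ I would also rewrite
\[
\mathcal{F}(\gamma_0,\gamma_1)=\gamma_0(S^2\times S^2)+\gamma_1(S^2\times S^2)-2\int_{S^2\times S^2}\sqrt{\sigma_0\sigma_1}\,\langle u,v\rangle\,d\gamma,
\]
which isolates the only nontrivial term. Finally $\Gamma(\mu,\nu)\neq\emptyset$, e.g.\ $\gamma_0=\frac{1}{\nu(S^2)}\mu\otimes\nu$ and $\gamma_1=\frac{1}{\mu(S^2)}\mu\otimes\nu$ when $\mu,\nu\ne 0$, the degenerate cases being trivial.

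Next I would take a minimizing sequence $(\gamma_0^n,\gamma_1^n)\in\Gamma(\mu,\nu)$. The marginal constraints pin down the total masses $\gamma_i^n(S^2\times S^2)$, so the sequence is bounded in total variation on the compact space $S^2\times S^2$; Banach--Alaoglu yields a subsequence with $\gamma_i^n\rightharpoonup^\ast\gamma_i^\ast$, and testing the pushforward constraints against continuous functions shows $(\gamma_0^\ast,\gamma_1^\ast)\in\Gamma(\mu,\nu)$. The heart of the matter is the weak-$\ast$ lower semicontinuity of $\mathcal{F}$: on $\{\langle u,v\rangle\ge 0\}$ the integrand $f_{u,v}$ is jointly convex in $(\sigma_0,\sigma_1)$ (the geometric mean $\sqrt{\sigma_0\sigma_1}$ is concave), positively $1$-homogeneous and lower semicontinuous, so the standard lower-semicontinuity theory for convex $1$-homogeneous integral functionals of measures applies, while on $\{\langle u,v\rangle<0\}$ one has $f_{u,v}(\sigma_0,\sigma_1)\ge\sigma_0+\sigma_1$ and a decoupling argument (replacing the part of the semi-coupling living there by a product, without changing the marginals or increasing the cost) shows near-optimal sequences place no mass there; alternatively one passes to the equivalent optimal-entropy-transport formulation, where lower semicontinuity is classical. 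Either way $\mathcal{F}(\gamma_0^\ast,\gamma_1^\ast)\le\liminf_n\mathcal{F}(\gamma_0^n,\gamma_1^n)=\operatorname{WFR}(\mu,\nu)^2$, so $(\gamma_0^\ast,\gamma_1^\ast)$ is optimal.

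For the length-space assertion I would first check that $\operatorname{WFR}$ is a genuine metric: symmetry is immediate, the triangle inequality follows from a gluing construction on semi-couplings, and non-degeneracy from the rewriting of $\mathcal{F}$ above. To produce minimizing geodesics the cleanest route is the lift to the metric cone $\mathfrak{C}=\bigl(S^2\times[0,\infty)\bigr)/\bigl(S^2\times\{0\}\bigr)$ over the round sphere: one identifies $\operatorname{WFR}(\mu,\nu)$ with the infimum of the quadratic Wasserstein distance $W_2$ on $\mathcal{M}_+(\mathfrak{C})$ over all homogeneous lifts $\lambda_0,\lambda_1$ projecting to $\mu$ and $\nu$ (equivalently, with the Benamou--Brenier-type continuity-equation-with-source dynamic formulation). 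Since $(S^2,d_{S^2})$ is a complete geodesic space, so is its cone $\mathfrak{C}$, hence the $L^2$-Wasserstein space over $\mathfrak{C}$ is geodesic; picking a $W_2$-geodesic between optimal lifts and pushing it forward to $\mathcal{M}_+(S^2)$ yields a curve $t\mapsto\mu_t$ with $\operatorname{WFR}(\mu_s,\mu_t)=|t-s|\,\operatorname{WFR}(\mu,\nu)$.

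I expect the weak-$\ast$ lower-semicontinuity step to be the main obstacle: the cross term $\int\sqrt{\sigma_0\sigma_1}\,\langle u,v\rangle\,d\gamma$ has the ``wrong'' sign where $\langle u,v\rangle<0$ and is only upper semicontinuous elsewhere, so one genuinely needs both the convex-integral-functional machinery and the reduction that makes the negative-correlation region irrelevant for optimality; the equivalence of the semi-coupling formulation with the dynamic/cone formulation used for the geodesic part is the second delicate ingredient.
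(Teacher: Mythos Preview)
The paper does not give its own proof of this theorem: it is stated purely as a summary of results from the cited works \cite{chizat2018interpolating,chizat2018unbalanced,liero2018optimal}, with no argument supplied. Your opening sentence already acknowledges this, and the sketch you provide is essentially the strategy carried out in those references---the direct method for existence of an optimal semi-coupling, and the cone/lift construction (or equivalently the dynamic formulation) for the geodesic length-space property. So there is nothing to compare against in the paper itself; your outline is consistent with the literature it cites, and you have correctly flagged the weak-$\ast$ lower-semicontinuity of the cross term and the equivalence of the static and dynamic formulations as the two genuinely nontrivial ingredients.
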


\subsection{The subspace of all finitely supported measures}\label{sec:finmeasures}
We will now consider the restriction of this metric to the subset of all finitely supported, finite measures on $S^2$
\begin{equation}
 \mathcal{M}^{<\infty}_+(S^2)=\left\{\sum_{i=1}^m a_i \delta_{u_i}:  m\in \mathbb{N}, a_i\in \mathbb{R}^+, \text{ and } u_i\in S^2\right\} \subset\mathcal{M}_+(S^2),
\end{equation}  
where $\delta_u$ is the Dirac measure at $u\in S^2$. Note that for $\mu,\nu\in\mathcal{M}^{<\infty}_+(S^2)$ an arbitrary semi-coupling $(\gamma_0,\gamma_1)\in \Gamma(\mu,\nu)$ is not required to be finitely supported. We will introduce a subset of discrete semi-couplings and show that optimizing over all valid semi couplings is equivalent to optimizing over our restricted subset.
\begin{definition}[Discrete semi-couplings]\label{def:discrete_semicoupling}
 Let $\mu,\nu\in\mathcal{M}^{<\infty}_+(S^2)$ with $\mu=\sum_{i=1}^m a_i \delta_{u_i}$ and $\nu=\sum_{j=1}^n b_j\delta_{v_j}$. A \textit{discrete semi-coupling} of $\mu$ and $\nu$ is a pair of $(m+1)\times(n+1)$ matrices $(A,B)$ satisfying the properties:
\begin{enumerate}[{\textbf(a)}]
\item for all $i,j$, $A_{ij}\geq 0$ and $B_{ij}\geq0$;\label{property_a}
\item for all $i=1,\dots, m$, $a_i=A_{i0}+\dots+A_{in}$;\label{property_b}
\item for each  $j=0,\dots,n$, $A_{0j}=0$;\label{property_c}
\item for all $j=1,\dots,n$, $b_j=B_{0j}+\dots+B_{mj}$;\label{property_d}
\item for each $i=0,\dots, m$, $B_{i0}=0$.\label{property_e}
\end{enumerate}
We denote the set of all discrete semi-couplings of $\mu$ and $\nu$ by $\mathcal{A}(\mu,\nu)$.
\end{definition}
Note that the discrete semi-couplings from $\mu$ to $\nu$ represent a proper subset of $\Gamma(\mu,\nu)$. Thus, to show that we can compute $\operatorname{WFR}(\mu,\nu)$ by simply optimizing over $\mathcal{A}(\mu,\nu)$ we need the following lemma.
\begin{lemma}
 Let $\mu,\nu\in\mathcal{M}^{<\infty}_+(S^2)$ with  $\mu=\sum_{i=1}^m a_i \delta_{u_i}$ and $\nu=\sum_{j=1}^n b_j\delta_{v_j}$. Let $(\gamma_0,\gamma_1)\in\Gamma(\mu,\nu)$; then there exists $(A,B)\in \mathcal{A}(\mu,\nu)$ such that
\begin{equation}
      \int_{S^2\times S^2}\left|\sqrt{\frac{\gamma_0}{\gamma}(u,v)}u-\sqrt{\frac{\gamma_1}{\gamma}(u,v)}v\right|^2 d\gamma(u,v)=\sum_{i=0}^{m}\sum_{j=0}^n| \sqrt{A_{ij}}u_i-\sqrt{B_{ij}}v_j|^2
\end{equation}
where $u_0=v_0=(1,0,0)\in S^2$.
\end{lemma}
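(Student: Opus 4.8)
The plan is to reduce the abstract semi-coupling $(\gamma_0,\gamma_1)$ to a finite matrix representation by exploiting that the marginals $\mu,\nu$ are supported on finitely many points. First I would disintegrate: since $(\operatorname{Proj}_0)_\#\gamma_0=\mu=\sum_i a_i\delta_{u_i}$, the measure $\gamma_0$ is concentrated on $\bigcup_i\left(\{u_i\}\times S^2\right)$, and similarly $\gamma_1$ is concentrated on $\bigcup_j\left(S^2\times\{v_j\}\right)$. The reference measure $\gamma$ with $\gamma\ll\gamma_0,\gamma_1$ may be taken to be concentrated on the intersection $\bigcup_{i,j}\{(u_i,v_j)\}$ after discarding a null set (any part of $\gamma$ where $\gamma_0/\gamma=0$ or $\gamma_1/\gamma=0$ contributes $|{-}\sqrt{\gamma_1/\gamma}\,v|^2$ or $|\sqrt{\gamma_0/\gamma}\,u|^2$ to the integrand, which can only be decreased by a suitable choice; I will need to argue this carefully, see below). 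Granting this, I would then define the matrix entries for $i=1,\dots,m$, $j=1,\dots,n$ by $A_{ij}=\gamma_0(\{(u_i,v_j)\})$ and $B_{ij}=\gamma_1(\{(u_i,v_j)\})$, setting the zeroth-row/column entries according to properties \ref{property_c} and \ref{property_e} of Definition~\ref{def:discrete_semicoupling}, and placing any "mass lost to infinity" (the part of $\gamma_0$-mass over $u_i$ not matched to any $v_j$ under $\gamma$) into the column $j=0$, and symmetrically for $\gamma_1$ into the row $i=0$, which is exactly why the matrices are $(m+1)\times(n+1)$ with the auxiliary point $u_0=v_0=(1,0,0)$.

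The key verification is that with these definitions property \ref{property_b} holds: $A_{i0}+\dots+A_{in}=a_i$. This follows because the total $\gamma_0$-mass over $\{u_i\}\times S^2$ is $a_i$ (that is the marginal constraint), and this mass is partitioned into the pieces sitting over the atoms $(u_i,v_j)$ with $j\ge 1$ plus the residual piece, which I assign to $A_{i0}$. Property \ref{property_d} is symmetric. Properties \ref{property_a}, \ref{property_c}, \ref{property_e} are immediate from the construction. Then the integral identity is a bookkeeping computation: on each atom $(u_i,v_j)$ the integrand $\left|\sqrt{\gamma_0/\gamma}\,u_i-\sqrt{\gamma_1/\gamma}\,v_j\right|^2$ integrated against $\gamma$ gives $\left|\sqrt{\gamma_0(\{(u_i,v_j)\})}\,u_i-\sqrt{\gamma_1(\{(u_i,v_j)\})}\,v_j\right|^2=|\sqrt{A_{ij}}\,u_i-\sqrt{B_{ij}}\,v_j|^2$, using that $(\sqrt{c})^2$ times the atom weight equals $c$ when $c=\gamma_0(\text{atom})$ and the atom weight is $\gamma(\text{atom})$ — here I should double-check the scaling $\int_E (\gamma_0/\gamma)\,d\gamma=\gamma_0(E)$ holds for singletons, which is just the definition of Radon–Nikodym derivative. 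The residual masses over $u_i$ (resp.\ $v_j$) contribute $|\sqrt{A_{i0}}\,u_i-0|^2=A_{i0}$ (resp.\ $|0-\sqrt{B_{0j}}\,v_j|^2=B_{0j}$), matching the $j=0$ and $i=0$ terms of the right-hand sum since $|u_i|=|v_j|=1$.

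The main obstacle is the reduction step: showing that we may assume $\gamma$ (and hence the relevant parts of $\gamma_0,\gamma_1$) is purely atomic on $\{(u_i,v_j)\}_{i,j}$ without increasing the functional. The subtlety is that $\gamma_0$ could, over a fixed $u_i$, spread mass over a continuum of $v$-values, each paired (via $\gamma$) with some mass; because the cost integrand only sees the \emph{direction} $v$ and the \emph{amounts} $\gamma_0/\gamma$, $\gamma_1/\gamma$, collapsing all $v$-mass that $\gamma$ places over a given non-atomic set is \emph{not} cost-free in general — but here $\gamma_1$'s second marginal being atomic forces $\gamma$ (being $\ll\gamma_1$) to be atomic in its second coordinate, so $\gamma$ is supported on $\bigcup_j S^2\times\{v_j\}$; combined with the first-coordinate structure one gets $\gamma$ supported on the finite set $\{(u_i,v_j)\}$ up to a $\gamma$-null remainder, except possibly mass of $\gamma$ whose $\gamma_0$-density or $\gamma_1$-density vanishes, which can be harmlessly removed or absorbed. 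Making this dichotomy precise — in particular handling the interplay of the three measures and the two-sided "lost mass" — is where the real work lies; once it is done, the rest is the accounting described above.
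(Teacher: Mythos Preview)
Your construction of $(A,B)$---setting $A_{ij}=\gamma_0(\{(u_i,v_j)\})$, $B_{ij}=\gamma_1(\{(u_i,v_j)\})$ for $i,j\geq 1$ and dumping the residual mass into $A_{i0}$, $B_{0j}$---is exactly right, and the bookkeeping you describe (partitioning $S^2\times S^2$ into the atoms $\{(u_i,v_j)\}$, the strips $\{u_i\}\times(S^2\setminus\{v_j\})$, etc.) does yield equality. But the ``main obstacle'' you identify is illusory, and your attempt to resolve it contains an error: the correct absolute continuity is $\gamma_0,\gamma_1\ll\gamma$, not $\gamma\ll\gamma_1$, so $\gamma$ need \emph{not} be atomic. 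What you actually need is much weaker: on the strip $\{u_i\}\times(S^2\setminus\{v_1,\dots,v_n\})$ the density $d\gamma_1/d\gamma$ vanishes $\gamma$-a.e.\ (because $\gamma_1$ gives that strip measure zero), so the integrand there is simply $(d\gamma_0/d\gamma)|u_i|^2=d\gamma_0/d\gamma$, and integrating gives $\gamma_0$ of the strip, which is $A_{i0}$. No reduction of $\gamma$ is required; drop that paragraph and your proof goes through cleanly.

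The paper takes a slicker route that dissolves the obstacle altogether: expand the square using $|u|=|v|=1$ to get
\[
\int\frac{d\gamma_0}{d\gamma}\,d\gamma+\int\frac{d\gamma_1}{d\gamma}\,d\gamma-2\int\sqrt{\frac{d\gamma_0}{d\gamma}\frac{d\gamma_1}{d\gamma}}\,(u\cdot v)\,d\gamma.
\]
The first two integrals are just $\sum_i a_i$ and $\sum_j b_j$ by the marginal constraints, regardless of how $\gamma$ is distributed. The cross term automatically localizes to the finite set $\{(u_i,v_j)\}$ because the product of the two densities vanishes $\gamma$-a.e.\ elsewhere, and on each atom it collapses to $\sqrt{A_{ij}B_{ij}}(u_i\cdot v_j)$. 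Then the same expansion in reverse gives the double sum on the right. What this buys is that all the ``residual mass'' accounting is absorbed into the trivial identity $\int (d\gamma_k/d\gamma)\,d\gamma=\gamma_k(S^2\times S^2)$, so you never have to track the non-atomic pieces of $\gamma$ separately.
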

\begin{proof}
Construct $(A,B)$ as follows: 
\begin{itemize}
    \item For $i\in\{1,...,m\}$ and $j\in \{1,...,n\}$,  $A_{ij}=\gamma_0/\gamma(u_i,v_j)$ and $B_{ij}=\gamma_1/\gamma(u_i,v_j)$,
    \item for $i\in\{1,...,m\}$,  $A_{i0}=a_i-\sum_{j=1}^n\gamma_0/\gamma(u_i,v_j)$,
    \item for $j\in \{1,...,n\}$,  $B_{0j}=b_j-\sum_{i=1}^m\gamma_1/\gamma(u_i,v_j),$
    \item for $i\in\{0,...,m\}$,  $B_{i0}=0,$ and
    \item for $j\in \{0,...,n\}$,  $A_{0j}=0.$
\end{itemize}
Note that by construction $(A,B)\in \mathcal{A}(\mu,\nu)$. Observe,
\begin{align}
    \int\limits_{S^2\times S^2}\left|\sqrt{\frac{\gamma_0}{\gamma}}u-\sqrt{\frac{\gamma_1}{\gamma}}v\right|^2d\gamma(u,v)&=\int\limits_{S^2\times S^2}\left|\sqrt{\frac{\gamma_0}{\gamma}}\right|^2d\gamma(u,v)+\int\limits_{S^2\times S^2}\left|\sqrt{\frac{\gamma_1}{\gamma}}\right|^2d\gamma(u,v)\\&\qquad-2\int\limits_{S^2\times S^2}\sqrt{\frac{\gamma_0}{\gamma}}\sqrt{\frac{\gamma_1}{\gamma}}(u\cdot v)\gamma(u,v)\\
    &=\sum_{i=1}^{m}a_i+\sum_{j=1}^{n}b_j-2\sum_{i=1}^{m}\sum_{j=1}^{n}\sqrt{A_{ij}}\sqrt{B_{ij}}(u_i\cdot v_j)\\
    &=\sum_{i=0}^{m}\sum_{j=0}^n| \sqrt{A_{ij}}u_i-\sqrt{B_{ij}}v_j|^2
\end{align}
\end{proof}
\begin{corollary}
 Let $\mu,\nu\in\mathcal{M}^{<\infty}_+(S^2)$ with $\mu=\sum_{i=1}^m a_i \delta_{u_i}$ and $\nu=\sum_{j=1}^n b_j\delta_{v_j}$. Then \begin{equation}
     \operatorname{WFR}(\mu,\nu)^2=\inf\limits_{(A,B)\in\mathcal{A}(\mu,\nu)}\sum_{i=0}^{m}\sum_{j=0}^n| \sqrt{A_{ij}}u_i-\sqrt{B_{ij}}v_j|^2
 \end{equation}
\end{corollary}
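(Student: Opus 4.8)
The plan is to prove the two inequalities separately; the lower bound will be almost immediate from the preceding Lemma, while the upper bound requires realizing every discrete semi-coupling as a genuine one. For the inequality $\operatorname{WFR}(\mu,\nu)^2\ge\inf_{(A,B)\in\mathcal{A}(\mu,\nu)}\sum_{i=0}^m\sum_{j=0}^n|\sqrt{A_{ij}}u_i-\sqrt{B_{ij}}v_j|^2$, note that the Lemma assigns to every semi-coupling $(\gamma_0,\gamma_1)\in\Gamma(\mu,\nu)$ a discrete semi-coupling $(A,B)\in\mathcal{A}(\mu,\nu)$ with the same cost; hence the WFR integrand of $(\gamma_0,\gamma_1)$, when integrated, is bounded below by the infimum of the discrete cost over $\mathcal{A}(\mu,\nu)$, and passing to the infimum over all semi-couplings --- which is $\operatorname{WFR}(\mu,\nu)^2$ by definition --- gives the claim.

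For the reverse inequality I would fix $(A,B)\in\mathcal{A}(\mu,\nu)$ and build a semi-coupling whose cost equals $\sum_{i=0}^m\sum_{j=0}^n|\sqrt{A_{ij}}u_i-\sqrt{B_{ij}}v_j|^2$. Take $\gamma_0=\sum_{i=1}^m\sum_{j=1}^n A_{ij}\delta_{(u_i,v_j)}+\sum_{i=1}^m A_{i0}\delta_{(u_i,w_i)}$ and $\gamma_1=\sum_{i=1}^m\sum_{j=1}^n B_{ij}\delta_{(u_i,v_j)}+\sum_{j=1}^n B_{0j}\delta_{(z_j,v_j)}$, with auxiliary points $w_i,z_j\in S^2$ chosen as below, and let $\gamma=\gamma_0+\gamma_1$ be the reference measure. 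Properties (b) and (d) of Definition~\ref{def:discrete_semicoupling} say precisely that $(\operatorname{Proj}_0)_\#\gamma_0=\mu$ and $(\operatorname{Proj}_1)_\#\gamma_1=\nu$, so $(\gamma_0,\gamma_1)\in\Gamma(\mu,\nu)$. Evaluating the cost atom by atom: at $(u_i,v_j)$ with $1\le i\le m$, $1\le j\le n$ the Radon--Nikodym densities are $A_{ij}/(A_{ij}+B_{ij})$ and $B_{ij}/(A_{ij}+B_{ij})$, and multiplying the integrand there by the atom mass $A_{ij}+B_{ij}$ reduces it --- via the same bilinear expansion $|\sqrt{A_{ij}}u_i-\sqrt{B_{ij}}v_j|^2=A_{ij}-2\sqrt{A_{ij}B_{ij}}\,(u_i\cdot v_j)+B_{ij}$ used in the Lemma's proof --- to $|\sqrt{A_{ij}}u_i-\sqrt{B_{ij}}v_j|^2$; at $(u_i,w_i)$ the density of $\gamma_1$ vanishes, so the contribution is $A_{i0}=|\sqrt{A_{i0}}u_i-\sqrt{B_{i0}}v_0|^2$ (using $B_{i0}=0$, i.e.\ property (e), and $|u_i|=1$); and symmetrically $(z_j,v_j)$ contributes $B_{0j}=|\sqrt{A_{0j}}u_0-\sqrt{B_{0j}}v_j|^2$ (property (c)). Adding the vanishing $(0,0)$-term and summing, the cost of $(\gamma_0,\gamma_1)$ equals $\sum_{i=0}^m\sum_{j=0}^n|\sqrt{A_{ij}}u_i-\sqrt{B_{ij}}v_j|^2$, whence $\operatorname{WFR}(\mu,\nu)^2$ is at most this quantity; taking the infimum over $(A,B)$ finishes the proof.

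The one delicate point --- and the step I would treat most carefully --- is the choice of $w_i$ and $z_j$: the atom-by-atom density computation is legitimate only if all atoms appearing in $\gamma_0$ and in $\gamma_1$ lie at pairwise distinct points of $S^2\times S^2$, since otherwise coincident atoms merge and the cost ceases to split summand by summand. (This is exactly what can fail if one naively places the destroyed mass at $(u_i,v_0)$ with $v_0=(1,0,0)$ while $(1,0,0)$ happens to coincide with one of the $v_j$.) Because $S^2$ is infinite and $\{u_1,\dots,u_m\}$, $\{v_1,\dots,v_n\}$ are finite, one can always pick $w_i\notin\{v_1,\dots,v_n\}$ and $z_j\notin\{u_1,\dots,u_m\}$, which makes all atoms distinct; moreover the value $A_{i0}$ (resp.\ $B_{0j}$) of the corresponding contribution is independent of this choice, as long as the complementary measure vanishes at the new location, which the choice guarantees. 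Alternatively one may first rotate $\mu$ and $\nu$ --- an operation under which $\operatorname{WFR}$ is invariant --- so that $(1,0,0)$ avoids the $u_i$ and $v_j$ and the literal atoms $\delta_{(u_i,v_j)}$, $0\le i\le m$, $0\le j\le n$, are already pairwise distinct. Finally, one may note in passing that $\mathcal{A}(\mu,\nu)$ is a nonempty compact convex polytope and the discrete cost is continuous, so the infimum above is in fact attained; this is the form that will be used in the algorithmic sections.
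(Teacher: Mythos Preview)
Your argument is correct. In the paper this Corollary is stated without proof: the inequality $\operatorname{WFR}(\mu,\nu)^2\ge\inf_{(A,B)\in\mathcal{A}(\mu,\nu)}\sum|\sqrt{A_{ij}}u_i-\sqrt{B_{ij}}v_j|^2$ is exactly the content of the preceding Lemma, while the reverse inequality is taken for granted via the sentence ``the discrete semi-couplings from $\mu$ to $\nu$ represent a proper subset of $\Gamma(\mu,\nu)$,'' without spelling out the identification or checking that costs agree.

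Your explicit construction of $(\gamma_0,\gamma_1)$ from $(A,B)$ supplies what the paper omits, and your care with the auxiliary points $w_i,z_j$ is warranted: if one naively places the ``destroyed'' and ``created'' mass at $(u_i,v_0)$ and $(u_0,v_j)$ with $u_0=v_0=(1,0,0)$ and this point happens to coincide with one of the $u_i$ or $v_j$, atoms merge and the resulting semi-coupling cost can differ from the discrete cost (the cross term $\sqrt{\alpha\beta}$ is not additive under merging). Either of your fixes---choosing $w_i,z_j$ off the supports, or noting that the discrete cost is independent of $u_0,v_0$ so these may be moved to generic points---resolves this cleanly. The closing remark on compactness and attainment of the infimum is also a useful addition, as the paper uses this fact later without stating it here.
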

\begin{remark}\label{reformulateSRMM}
By writing out the norms inside the summation and excluding terms that sum to zero, one obtains the following alternative formula for WFR:
\begin{equation}\label{eq:distance}
   \operatorname{WFR}(\mu,\nu)=\left(\sum_{i=1}^{m} a_i+\sum_{j=1}^{n} b_j-2\sup_{(A,B)\in\mathcal{A}}\sum_{i=1}^{m}\sum_{j=1}^{n}\sqrt{A_{ij}B_{ij}}(u_i\cdot v_j)     \right)^{1/2}.
\end{equation}
\end{remark}

\begin{remark}Given a discrete semi-coupling $(A,B)$, the zeroth column of $A$ and zeroth row of $B$ are included to handle the case where \textit{all} of the mass at the corresponding support is destroyed/created rather than transported. This does not mean, however, that these rows/columns being zero correspond to no creation/destruction of mass. We will demonstrate this fact in the following example. \end{remark}

\begin{example}
We consider the example with $\mu=2\delta_{u_1}+\delta_{u_2}$ and $\nu=\delta_{v_1}+3\delta_{v_2}$ 
with 
$$u_1=v_1=\left(
\begin{array}{c}
1\\
0\\
\end{array}
\right),\qquad u_2=v_2=\left(
\begin{array}{c}
0\\
1\\
\end{array}
\right).$$
The corresponding weight matrix between the supports is then given by 
\begin{equation}
    \Omega=\begin{pmatrix}1&0\\0&1\end{pmatrix}.
\end{equation} 
Consequently the optimal semi-coupling is given by:
\begin{equation}
    A=\begin{pmatrix}0&0&0\\0&2&0\\0&0&1\end{pmatrix} \text{ and } B=\begin{pmatrix}0&0&0\\0&1&0\\0&0&3\end{pmatrix}.
\end{equation}
Notice that even though the zeroth column of $A$ and zeroth row of $B$ are all zeros, a unit of mass is destroyed and two units of mass are created. 
For contrast we consider a second example with the same masses but different supports: 
$\mu=2\delta_{u_1}+\delta_{u_2}$ and $\nu=\delta_{v_1}+3\delta_{v_2}$ where 
$$u_1=v_1=\left(
\begin{array}{c}
1\\
0\\
\end{array}
\right),\qquad u_2=\left(
\begin{array}{c}
0\\
1\\
\end{array}
\right),\qquad v_2=-u_2=-\left(
\begin{array}{c}
0\\
1\\
\end{array}
\right).$$
This time the corresponding weight matrix between the supports is given by\begin{equation}
    \Omega=\begin{pmatrix}1&0\\0&-1\end{pmatrix}
\end{equation} 
and an optimal semi-coupling is given by:
\begin{equation}
    A=\begin{pmatrix}0&0&0\\0&2&0\\1&0&0\end{pmatrix} \text{ and } B=\begin{pmatrix}0&0&3\\0&1&0\\0&0&0\end{pmatrix}.
\end{equation}
In this case, two units of mass are destroyed (one from each support of $\mu$) and three units are created (all three are created at the second support of $\nu$).
\end{example}

\subsection{Flatness of the space of measures with a fixed number of support points}\label{flatness}
Note that while the main result of this section proves a fundamental fact about the geometry of the space of finitely supported measures, this result is not a prerequisite for the rest of the paper. Let ${\mathcal M}^n_+$ denote the set of measures on $S^2$ that are supported at precisely $n$ points. It is clear that ${\mathcal M}^n_+$ has a natural structure as a $3n$-manifold. In the following theorem, we will show that the restriction of the WFR metric to ${\mathcal M}^n_+$ is locally isometric to the distance function corresponding to a flat Riemannian metric on this space.

\begin{theorem} \label{FlatSubspace}There is a flat Riemannian metric on  ${\mathcal M}^n_+$ whose Riemannian distance function agrees with the Wasserstein-Fisher-Rao metric on a small neighborhood of every point. 

Furthermore, let 
$${\mathcal P}_n=\left\{(x_1,\dots,x_n)\in \left(\reals^3-\{0\}\right)^n: \text{ for all }i\neq j, 
\frac{x_i}{|x_i|}\neq\frac{x_j}{|x_j|}\right\}\subset \left(\reals^3/\{0\}\right)^n$$
and define
\begin{equation}\label{eq:Qn}
   Q_n:{\mathcal P}_n\to {\mathcal M}^n_+\text{ by } Q_n(x_1,\dots,x_n)=\sum_{i=1}^n\|x_i\|^2
\delta_{\left(x_i/\|x_i\|\right)}. 
\end{equation}
Then, for every $x=(x_1,\dots,x_n)\in{\mathcal P}_n$ there exists an $\epsilon>0$ such that for all $y,z\in{\mathcal P}_n$ with
 $|y-x|<\epsilon$ and $|z-x|<\epsilon$ the optimal discrete semi-coupling from $Q_n(y)$ to $Q_n(z)$ is a pair of diagonal matrices.
\end{theorem}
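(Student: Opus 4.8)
The plan is to reduce the WFR distance computation between finitely supported measures to a small, explicit optimization problem and then analyze its minimizers in a neighborhood of a fixed configuration $x\in\mathcal P_n$. By the Corollary, for $y=(y_1,\dots,y_n)$ and $z=(z_1,\dots,z_n)$ in $\mathcal P_n$ we have
\begin{equation}
\operatorname{WFR}(Q_n(y),Q_n(z))^2=\inf_{(A,B)\in\mathcal A}\sum_{i=0}^n\sum_{j=0}^n\bigl|\sqrt{A_{ij}}\,\tfrac{y_i}{\|y_i\|}-\sqrt{B_{ij}}\,\tfrac{z_j}{\|z_j\|}\bigr|^2,
\end{equation}
where (for $y$ close to $x$) the masses are $a_i=\|y_i\|^2$ and $b_j=\|z_j\|^2$. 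The point of putting the cut distance at exactly the right scale is that when $y=z=x$, the diagonal semi-coupling $A=B=\operatorname{diag}(0,\|x_1\|^2,\dots,\|x_n\|^2)$ gives WFR distance $0$, and this is the unique optimizer: any off-diagonal mass strictly increases the cost because $\tfrac{x_i}{\|x_i\|}\neq\tfrac{x_j}{\|x_j\|}$ forces $|\sqrt{A_{ij}}\tfrac{x_i}{\|x_i\|}-\sqrt{B_{ij}}\tfrac{x_j}{\|x_j\|}|^2$ to be bounded below by a positive multiple of $A_{ij}+B_{ij}$ for $i\neq j$ (and the cost of destroying/recreating mass through row/column $0$ is likewise strictly positive). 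So the key geometric input is: \emph{at $x$ there is a unique minimizer and it is diagonal}.

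Next I would upgrade this to a neighborhood statement by a compactness/continuity argument. Using Remark~\ref{reformulateSRMM}, minimizing the cost is equivalent to maximizing $\sum_{i,j\geq 1}\sqrt{A_{ij}B_{ij}}\,(u_i\cdot v_j)$ over the convex set $\mathcal A$, where $u_i=y_i/\|y_i\|$, $v_j=z_j/\|z_j\|$; the set of admissible $(A,B)$ depends continuously (as a set, in Hausdorff distance) on the masses, and the objective depends continuously on $(y,z)$ and jointly continuously on $(A,B)$. The objective is concave, so the value is continuous in $(y,z)$ and the optimal-value map is well behaved; more importantly, the set of maximizers is upper semicontinuous in $(y,z)$. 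I would then argue by contradiction: if there were sequences $y^{(k)},z^{(k)}\to x$ whose optimal semi-couplings $(A^{(k)},B^{(k)})$ are not diagonal, then (passing to a subsequence, using that the feasible matrices stay bounded since their row/column sums converge to $\|x_i\|^2$) they converge to some optimal semi-coupling at $x$, which by the previous paragraph must be the unique diagonal one — contradicting that a nondiagonal matrix cannot be a limit of "far-from-diagonal" matrices. To make "not diagonal" quantitative one shows the off-diagonal mass $\sum_{i\neq j}(A^{(k)}_{ij}+B^{(k)}_{ij})$ plus the destroyed/created mass in rows/column $0$ is bounded below by a fixed $\delta>0$ along the bad sequence, which survives the limit.

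The remaining ingredient — and the main obstacle — is the first part of the theorem: producing an explicit flat Riemannian metric on $\mathcal M^n_+$ whose geodesic distance locally coincides with WFR. The natural candidate is the pushforward under $Q_n$ of the flat metric on $\mathcal P_n\subset(\reals^3)^n$ inherited from the Euclidean structure on $(\reals^3)^n$ (note $Q_n$ is a local diffeomorphism away from the excluded diagonal-direction locus, and is $2^n$-to-one via the sign ambiguities $x_i\mapsto\pm x_i$, which I would handle by working on a connected component). The content is then the identity
\begin{equation}
\operatorname{WFR}(Q_n(y),Q_n(z))=|y-z|\qquad\text{for }y,z\text{ sufficiently close in the same chart,}
\end{equation}
which follows from the neighborhood result just proved: once the optimal semi-coupling is diagonal, the double sum collapses to $\sum_{i=1}^n |y_i-z_i|^2=|y-z|^2$ exactly (here one uses $\|y_i\|^2\tfrac{y_i}{\|y_i\|}$ reading correctly, i.e. $\sqrt{A_{ii}}u_i=y_i$ and $\sqrt{B_{ii}}v_i=z_i$ when $A_{ii}=\|y_i\|^2$, $B_{ii}=\|z_i\|^2$). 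Then $\operatorname{WFR}$ agrees with the Euclidean distance in the chart, which is exactly the geodesic distance of the flat pushforward metric on small balls, so the metric is flat and locally realizes WFR. I expect the fussy points to be: checking $Q_n$ is a well-defined local diffeomorphism and identifying the pushforward metric rigorously (charts, the sign covering), and making the semicontinuity-of-minimizers argument airtight, in particular the uniform lower bound $\delta$ on off-diagonal mass along a hypothetical bad sequence and the uniform bound keeping the feasible set compact.
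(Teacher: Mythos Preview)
Your overall architecture is right: the first statement follows from the second once you know the diagonal semi-coupling is optimal, since then $\operatorname{WFR}(Q_n(y),Q_n(z))^2=\sum_i|y_i-z_i|^2$ and $Q_n$ is a local isometry from Euclidean space. However, two points need correction.

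First, a minor one: the fibers of $Q_n$ are not given by sign flips $x_i\mapsto -x_i$; that changes $x_i/\|x_i\|$ and hence changes the measure. The genuine redundancy is the permutation action of $\Sigma_n$ on the $n$ factors, so $Q_n$ descends to a bijection $\mathcal P_n/\Sigma_n\to\mathcal M^n_+$ and the flat metric is the quotient of the Euclidean metric by this (free, isometric) action.

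Second, and this is the real gap: your compactness/semicontinuity argument does not close. You correctly note that any subsequential limit of optimal semi-couplings $(A^{(k)},B^{(k)})$ along $(y^{(k)},z^{(k)})\to (x,x)$ must be the unique diagonal optimum at $x$. But this only shows the optimal semi-couplings are \emph{asymptotically} diagonal, not \emph{exactly} diagonal. Your attempt to quantify non-diagonality by a fixed $\delta>0$ lower bound on off-diagonal mass is unjustified: nothing rules out optimal $(A^{(k)},B^{(k)})$ with off-diagonal mass positive but tending to zero, and in that scenario the limit is diagonal while every term of the sequence is not. Upper semicontinuity of the argmin alone cannot distinguish ``eventually equal to the diagonal'' from ``converging to the diagonal.''

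The paper's proof supplies exactly the missing ingredient. After normalizing the row/column sums (so the feasible set no longer depends on $y,z$) and substituting $P_{ij}=\sqrt{A_{ij}}$, $Q_{ij}=\sqrt{B_{ij}}$ (so the domain becomes a product of sphere-orthants inside a smooth compact manifold $\mathcal T$), the diagonal point $(P_0,Q_0)$ is shown to be a \emph{critical point of the cost for every pair $(\mu,\nu)$}, not just for $(\mu_0,\mu_0)$. Combined with positive-definiteness of the Hessian at $(\mu_0,\mu_0)$, this lets one invoke a stability lemma (Lemma~\ref{MinHess}): a fixed critical point that is a nondegenerate strict minimum at one parameter value remains the minimum for all nearby parameters. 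The persistence of the critical point is what upgrades ``limit is diagonal'' to ``minimizer \emph{is} diagonal''; without it your argument would also prove the false statement that any isolated minimum of $f(0,\cdot)$ persists under perturbation. If you want to repair your route, the cleanest fix is to verify directly that the diagonal semi-coupling satisfies the first-order optimality (KKT) conditions for every $(y,z)$ and then use concavity of the objective on $\tilde{\mathcal A}$; this is essentially equivalent to the paper's critical-point computation.
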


\begin{remark}Theorem~\ref{FlatSubspace} is closely related to Theorems 4.1 and 4.2 of \cite{chizat2018interpolating}. The main difference is that in \cite{chizat2018interpolating}, the domain of the measure is assumed to be a convex region in $\reals^d$, whereas in Theorem \ref{FlatSubspace}, the domain is $S^2$. Another difference is that in \cite{chizat2018interpolating}, a lower bound on the size of the neighborhood on which the metric is flat is given. The proof given here is more elementary (a straightforward application of differential topology) and thus we hope it is of interest in itself. 
\end{remark}
Before we are able to prove Theorem~\ref{FlatSubspace}, we need the following technical lemma.
\begin{lemma}\label{MinHess}
Let $M$ be an $m$-dimensional manifold, and $C$ a compact subset of $M$. Let $f:\reals^n\times M\to\reals$ be a $C^\infty$ function; let $x_0\in C\subset M$. Define $g:M\to\reals$ by $g(x)= f(0,x)$. Assume that $g$ restricted to $C$ attains an absolute minimum at $x_0$, and that $x_0$ is the only point of $C$ where $g$ achieves this minimum. Also, assume that $g$ has a critical point at $x_0$ and that the Hessian of $g$ at $x_0$ is positive definite. (Clearly this assumption is independent of which chart containing $x_0$ is used to compute the Hessian.) Now, suppose that there exists $\zeta>0$ such that for all $p\in\reals^n$ with $|p|<\zeta$, the function $M\to \reals$ defined by $x\mapsto f(p,x)$ has a critical point at $x_0$. Then there exists $\epsilon>0$ such that for all $p\in\reals^n$ with $|p|<\epsilon$, the function $C\to\reals$ defined by $x\mapsto f(p,x)$ attains an absolute minimum at $x_0$. 
\end{lemma}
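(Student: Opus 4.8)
The idea is to break $C$ into two pieces, a small closed ball $\bar B$ around $x_0$ and the compact ``far'' part $K=C\setminus(\text{smaller ball})$, handle each separately, and combine. On $\bar B$ a convexity argument pins the minimum at $x_0$ for all small $p$; on $K$ a crude continuity/compactness estimate shows the value of $f(p,\cdot)$ stays bounded below by something strictly larger than $f(p,x_0)$.

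\textbf{The local step.} First I would fix a chart around $x_0$, identifying a neighborhood of $x_0$ with an open subset of $\reals^m$ and $x_0$ with the origin. Since $f\in C^\infty$, the map $(p,x)\mapsto D^2_x f(p,x)$ (Hessian in these coordinates) is continuous, and by hypothesis $D^2_x f(0,x_0)$ is positive definite; hence there is a closed ball $\bar B=\bar B_r(x_0)$ contained in the chart domain and an $\epsilon_1\in(0,\zeta)$ such that $D^2_x f(p,x)$ is positive definite for all $x\in\bar B$ and all $|p|<\epsilon_1$. For such $p$, along any segment in the convex set $\bar B$ from $x_0$ to a point $y\neq x_0$ the function $t\mapsto f(p,x_0+t(y-x_0))$ has vanishing first derivative at $t=0$ (because $x_0$ is a critical point of $f(p,\cdot)$, using $\epsilon_1<\zeta$) and strictly positive second derivative on $[0,1]$, so $f(p,y)>f(p,x_0)$. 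Thus for $|p|<\epsilon_1$ the function $x\mapsto f(p,x)$ attains its strict minimum on $\bar B$ at $x_0$, and in particular $f(p,x)\geq f(p,x_0)$ for all $x\in\bar B\cap C$.

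\textbf{The far step.} Set $V=B_{r/2}(x_0)\subset\bar B$ and $K=C\setminus V$, a compact subset of $M$ with $x_0\notin K$. Since $x_0$ is the \emph{unique} minimizer of $g$ on $C$, we have $g(x)>g(x_0)$ on $K$, and compactness gives $\eta:=\min_{x\in K}g(x)-g(x_0)>0$. Because $f$ is continuous and $K\cup\{x_0\}$ is compact, there is $\epsilon_2>0$ with $|f(p,x)-g(x)|<\eta/3$ for all $x\in K\cup\{x_0\}$ whenever $|p|<\epsilon_2$; then for $|p|<\epsilon_2$ and $x\in K$,
\[
f(p,x) > g(x)-\tfrac{\eta}{3} \ \geq\ g(x_0)+\eta-\tfrac{\eta}{3} \ =\ g(x_0)+\tfrac{2\eta}{3} \ >\ g(x_0)+\tfrac{\eta}{3} \ >\ f(p,x_0).
\]
Since $V\subset\bar B$ we have $C=(\bar B\cap C)\cup K$, so taking $\epsilon=\min(\epsilon_1,\epsilon_2)$ the two estimates show $f(p,x)\geq f(p,x_0)$ for all $x\in C$ and all $|p|<\epsilon$, which is the claim.

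\textbf{Main obstacle.} The only delicate point is the local step: one must choose the ball small enough to lie in the chart domain and to keep the (chart-dependent, but that is harmless once the chart is fixed) Hessian positive definite uniformly for small $p$, and, crucially, one must use the standing hypothesis that $x_0$ \emph{persists} as a critical point of $f(p,\cdot)$ for $|p|<\zeta$ --- without it the minimizer could drift off $x_0$ and the conclusion would be false. The far step is the routine observation that an absolute minimum cannot jump a definite distance under a small perturbation, and requires only bookkeeping of the constants.
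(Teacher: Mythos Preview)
Your proof is correct and follows essentially the same route as the paper's. The paper argues by contradiction, extracting sequences $p_k\to 0$, $x_k\in C$ with $f(p_k,x_k)<f(p_k,x_0)$ and splitting into the cases $\lim x_k\neq x_0$ (handled by continuity and uniqueness, your ``far step'') and $\lim x_k=x_0$ (handled by the persistent-critical-point plus positive-Hessian argument along a segment in a chart, your ``local step''); your direct decomposition into $\bar B$ and $K$ with an explicit $\epsilon=\min(\epsilon_1,\epsilon_2)$ is simply the non-contradiction packaging of the same two ideas.
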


\begin{proof} Suppose the lemma is false. Then there exist sequences $\{p_k\}\in\mathbb{R}^n$ with $p_k\to 0$ and $\{x_k\}\in C$ such that for all $k\in\mathbb{N}$, $f(p_k,x_k)< f(p_k,x_0)$. By compactness of $C$, we may choose a subsequence of $\{x_k\}$ that converges in $C$. Continue to denote this subsequence by $\{x_k\}$ (and denote the corresponding subsequence of $\{p_k\}$ by $\{p_k\}$). Let $c=\lim x_k$. There are two  cases to consider:
\smallskip

\noindent{\textbf Case 1.} Suppose $c\neq x_0$. In that case, by the continuity of $f$, $f(0,c)=\lim f(p_k,x_k)\leq \lim f(p_k,x_0)=f(0,x_0)$, contradicting the hypothesis $g$ achieves its minimum on $C$ only at the point $x_0$. 

\smallskip

\noindent{\textbf Case 2.} Suppose $c=x_0$. By taking subsequences, we may assume that all of the $x_k$ lie in a single chart of $M$; hence, for the rest of this proof we will replace $M$ by $U\subset\reals^m$ and $x_0$ by $0\in \reals^m$. Recall that one assumption in our Lemma is that the Hessian of $g$ at $0$ is positive definite; denote its smallest eigenvalue by $\lambda$. (Using the chart, we are now thinking of $g$ as a function $U\to\reals$.) Clearly the condition that the lowest eigenvalue of the Hessian of the map $y\mapsto f(p,y)$ at $x$ is greater than $\lambda/2$ is an open condition on $(p,x)$. Hence we can choose $\delta>0$ so that for all $|p|<\delta$ and $|x|<\delta$, this condition is satisfied. Choose $k_0$ such that $|p_{k_0}|<\delta$ and $|x_{k_0}|<\delta$. Define the map $\tilde g: N_\delta(0)\to\reals$ by $\tilde g(x)=f(p_{k_0},x)$. Define $L:[0,1]\to \reals$ by $L(t)=f(p_{k_0},tx_{k_0})$. Using the chain rule, we see that $L''(t)=x_{k_0}^TH(p_{k_0},tx_{k_0})x_{k_0}$, where $H$ denotes the Hessian of $\tilde g$. Since we are assuming that the smallest eigenvalue of $H$ on $N_\delta(0)\times N_\delta(0)$ is greater than $\lambda/2$, it follows that $L''(t)>|x_{k_0}|^2\lambda/2$ for all $t\in [0,1]$. Also, recall the assumption that $0$ is a critical point of $\tilde g$; hence $L'(0)=0$. From this, it follows that $f(p_{k_0},x_{k_0})=L(1)>L(0)=f(p_{k_0},0)$. This contradicts our original construction of the sequences $\{p_k\}$ and $\{x_k\}$, which required that for all $k$, $f(p_k,x_k)<f(p_k,0)$, thereby proving the lemma.
\end{proof}

We are now able to proceed with the proof of Theorem~\ref{FlatSubspace}.
\begin{proof}[Proof of Theorem~\ref{FlatSubspace}] 
Clearly, the set ${\mathcal P}_n$ is a $3n$-manifold, and the symmetric group $\Sigma_n$ acts freely on ${\mathcal P}_n$ in the obvious way. Furthermore, if we give ${\mathcal P}_n$ the standard Euclidean Riemannian metric restricted from $\reals^{3n}$, $\Sigma_n$ acts by isometries. It follows that ${\mathcal P}_n/\Sigma_n$ inherits the structure of a flat Riemannian manifold.

Let $\mu\in{\mathcal M}^n_+$; so we can write $\mu=\sum_{i=1}^n a_i\delta_{u_i}$, where $\{u_1,\dots, u_n\}$ is a set of distinct elements of $S^2$ and each $a_i>0$. 
 Clearly the map $Q_n$, as defined in~\eqref{eq:Qn}, induces a bijection ${\mathcal P}_n/\Sigma_n\to{\mathcal M}^n_+$.

To complete the proof of the first statement of Theorem ~\ref{FlatSubspace}, we just need to prove that  $Q_n$ maps a small neighborhood of  each point $x$ in ${\mathcal P}_n$ isometrically to a small neighborhood of $Q_n(x)$ in ${\mathcal M}^n_+$.
Indeed this will follow directly from the second statement, i.e., from the fact that if two measures are supported by the same number of points and have their support points and weights very close to each other, then the optimal discrete semi-coupling between them is given by the obvious diagonal matrices coming from the 1-1 correspondence between their supports and weights. 

Let $\mu$ and $\nu$ be two measures on $S^2$, both supported at precisely $n$ points. Write $\mu=\sum_{i=1}^n a_i \delta_{u_i}$ and $\nu=\sum_{i=1}^n b_i \delta_{v_i}$, where $u_1,\dots,u_n$ are distinct elements of $S^2$ and $a_1,\dots,a_n>0$; similarly, $v_1,\dots,v_n$ are distinct elements of $S^2$ and $b_1,\dots,b_n>0$. Recall that a {\em discrete semi-coupling} from $\mu$ to 
$\nu$ is defined to be a pair of $(n+1)\times (n+1)$ matrices $(A,B)$ (where both indices in each matrix run from $0$ to $n$)  satisfying the following conditions:
\begin{enumerate}
\item for all $i,j$, $A_{ij}\geq 0$ and $B_{ij}\geq0$;
\item for all $i=1,\dots, n$, $a_i=A_{i0}+\dots+A_{in}$;
\item for all  $j=0,\dots,n$, $A_{0j}=0$;
\item for all $j=1,\dots,n$, $b_j=B_{0j}+\dots+B_{nj}$;
\item for all $i=0,\dots, n$, $B_{i0}=0$.
\end{enumerate}
We define a cost function $C$ on the set of all discrete semi-couplings from $\mu$ to $\nu$ by
$$C(A,B)=\sum_{i=0}^{m}\sum_{j=0}^{n}\left|\sqrt{B_{ij}}v_j-\sqrt{A_{ij}}u_i\right|^2$$
An {\em optimal discrete semi-coupling} from $\mu$ to $\nu$ is defined to be a discrete semi-coupling that minimizes $C$. Note that the set of discrete semi-couplings (i.e., the domain of $C$) varies according to the particular measures $\mu$ and $\nu$. To remedy this inconvenient fact, we define a {\em normalized discrete semi-coupling} to be pair of matrices $(A,B)$ satisfying the conditions:
\begin{enumerate}
\item for all $i,j$, $A_{ij}\geq 0$ and $B_{ij}\geq0$;
\item for all $i=1,\dots, n$, $1=A_{i0}+\dots+A_{in}$;
\item for each  $j=0,\dots,n$, $A_{0j}=0$;
\item for all $j=1,\dots,n$, $1=B_{0j}+\dots+B_{nj}$;
\item for each $i=0,\dots, n$, $B_{i0}=0$.
\end{enumerate}
We then define the normalized cost function by
$$\bar C(A,B)=\sum_{i=0}^{n}\sum_{j=0}^{n}\left|\sqrt{b_jB_{ij}}v_j-\sqrt{a_iA_{ij}}u_i\right|^2,$$ 
where $a_0$ and $b_0$ should be assigned the value 0.

Note that we are just rescaling each row of $A$ and each column of $B$ to make their sums 1, and then inserting the relevant scalars into the cost function so as not to change its behavior.
Define an {\em optimal normalized discrete semi-coupling} from $\mu$ to $\nu$ to be a normalized discrete semi-coupling that minimizes the normalized cost function. Note that the set of normalized discrete semi-couplings no longer depends on the particular pair of measures $\mu$ and $\nu$ (as long as they are both supported at precisely $n$ points). However, the normalized cost function does depend on $\mu$ and $\nu$. Henceforth, to make this dependence explicit, we write $\bar C_{\mu,\nu}$ to denote the normalized cost function corresponding to the measures $\mu$ and $\nu$.

We now make a further change of variables. For each normalized discrete semi-coupling $(A,B)$, we define another pair of $(n+1)\times(n+1)$ matrices $(P,Q)$ as follows: For each $i,j$, $P_{i,j}=\sqrt{A_{i,j}}$ and $Q_{i,j}=\sqrt{B_{i,j}}$. If we write the cost function as a function of $(P,Q)$, the form of the cost function becomes

$$G_{\mu,\nu}(P,Q)=\sum_{i=0}^{n}\sum_{j=0}^{n}\left|Q_{i,j}\sqrt{b_j}v_j-P_{i,j}\sqrt{a_i}u_i\right|^2.$$

Note that the corresponding domain of $G_{\mu,\nu}$ is the set of all pairs $(P,Q)$ of $(n+1)\times(n+1)$ matrices satisfying the following:

\begin{enumerate}
\item for all $i,j$, $P_{ij}\geq 0$ and $Q_{ij}\geq0$;
\item for all $i=1,\dots, n$, $1=P_{i0}^2+\dots+P_{in}^2$;
\item for each  $j=0,\dots,n$, $P_{0j}=0$;
\item for all $j=1,\dots,n$, $1=Q_{0j}^2+\dots+Q_{nj}^2$;
\item for each $i=0,\dots, n$, $Q_{i0}=0$.
\end{enumerate}
Let ${\mathcal T_0}$ denote the domain of $G_{\mu,\nu}$; it is a product of positive orthants of unit spheres, since each of rows 1 through $n$ of $P$, and each of columns 1 through $n$ of Q is constrained to be in such an orthant. (Row 0 of $P$ and column 0 of $Q$ are each required to be the zero vector.) Also, note that we can remove the first condition on the domain of $G_{\mu,\nu}$, thereby extending its domain to be a $2n$-fold product of spheres, with each sphere having dimension $n$.  Denote this extended domain by ${\mathcal T}$. Clearly, $G_{\mu,\nu}$ is defined  (by the same formula) and smooth on ${\mathcal T}$. 

Now consider the case $\mu=\nu$. In this case,  $G_{\mu,\mu}$ achieves a minimum value of $0$ on ${\mathcal T_0}$, and this value is achieved only at the point where
$$P=Q=
\begin{pmatrix}
0&0\cr
0&I_n
\end{pmatrix}.
$$
Denote this particular discrete semi-coupling by $(P_0,Q_0)$.
In fact, it's clear that this value is also a minimum on all of ${\mathcal T}$ since $G_{\mu,\mu}$ is a sum of non-negative terms. (On all of ${\mathcal T}$ there are other points besides $(P_0, Q_0)$ where this minimum is achieved.) 
$(P_0, Q_0)$ is a critical point of $G_{\mu,\mu}$ on ${\mathcal T}$, since it is a point where the minimum is achieved. Furthermore, we will show that the Hessian of $G_{\mu,\mu}$ at $(P_0, Q_0)$ is positive definite. 

To see that $G_{\mu,\mu}$ has positive definite Hessian at $(P_0,Q_0)$, reason as follows: Let $(P(t),Q(t))$, for $-a<t<a$, be a path in ${\mathcal T}$ such that $(P(0),Q(0))=(P_0,Q_0)$, and $(P'(0),Q'(0))\neq (0,0)$.  We will now show that $G_{\mu,\mu}(P(t),Q(t))$ has positive second derivative at $t=0$, no matter which such path is chosen. Note that no summand $G_{\mu,\mu}(P(t),Q(t))$ can have negative second derivative, since $(P(0),Q(0))$ is a local minimum of each summand. Hence, it suffices to show that just one summand of $G_{\mu,\mu}(P(t),Q(t))$ has positive second derivative. Note that the diagonal entries of $P'(0)$ and $Q'(0)$ are all zero, since ${\mathcal T}$ is a product of spheres.  It follows that either $P'(0)$ or $Q'(0)$ must have a nonzero entry in an off-diagonal element. Thus, choose $i_0\neq j_0$, and assume that $P'_{i_0,j_0}(0)\neq 0$. (The reasoning is the same for the case $Q'_{i_0,j_0}(0)\neq 0$.) First consider the case $j_0>0$. The corresponding summand of $G_{\mu,\mu}(P(t),Q(t))$ is $$\left|Q_{i_0,j_0}(t)\sqrt{b_{j_0}}v_{j_0}-P_{i_0,j_0}(t)\sqrt{a_{i_0}}u_{i_0}\right|^2.$$ A straightforward computation shows that the second derivative of this summand at $t=0$ is $$2\left|Q'_{i_0,j_0}(0)\sqrt{b_{j_0}}v_{j_0}-P'_{i_0,j_0}(0)\sqrt{a_{i_0}}u_{i_0}\right|^2.$$ Since $i_0\neq j_0$, it follows that $u_{i_0}$ and $v_{j_0}$ are linearly independent. Hence this second derivative is positive, since we are assuming that $P'_{i_0,j_0}(0)\neq 0$. For the case $j_0=0$, we know that $Q_{i_0,j_0}(t)$ is constant at zero (since it stays in ${\mathcal T}$). Therefore $Q'_{i_0,j_0}(0)=0$, and it still follows that the second derivative of this summand is positive. This proves that $G_{\mu,\mu}(P(t),Q(t))$ has positive second derivative at $t=0$ for every path $(P(t),Q(t))$ in ${\mathcal T}$ with $(P(0),Q(0))=(P_0,Q_0)$. Hence the Hessian of $G_{\mu,\mu}$ at $(P_0,Q_0)$ is positive definite.

\noindent{\textbf Claim:} For all $\mu,\nu\in{\mathcal M}_n^0$, $(P_0,Q_0)$ is a critical point of $G_{\mu,\nu}$ on ${\mathcal T}$.

The proof of this Claim is a straightforward computation. First we construct a basis for the tangent space $T_{(P_0,Q_0)}{\mathcal T}$, using the fact that ${\mathcal T}$ is a product of unit spheres. This basis is a union of two sets. The first set consists of tangent vectors of the form $(E_{i,j},0)$, where $E_{i,j}$ has the entry 1 in the $(i,j)$-th place and 0's elsewhere, for $1\leq i\leq n$, $0\leq j\leq n$, and $i\neq j$. The second set consists of elements of the form $(0,E_{i,j})$, where $E_{i,j}$ has the entry $1$ in the $(i,j)$-th space, and 0's elsewhere, for $0\leq i\leq n$, $1\leq j\leq n$, and $i\neq j$. It is then a trivial  calculation to see that the directional derivative of $G_{\mu,\nu}$ at $(P_0,Q_0)$ in the direction of any of these tangent vectors vanishes. This proves the claim.

Let $\mu_0\in{\mathcal M}_n^0$. We need to prove that there exists an $\epsilon>0$ such that for all $(\mu,\nu)$ in an $\epsilon$-neighborhood of $(\mu_0,\mu_0)$ in ${\mathcal M}_n^0\times{\mathcal M}_n^0$, the minimum value of $G_{\mu,\nu}$ on ${\mathcal T}_0$ is achieved at $(P_0,Q_0)$.   

Define a function $H:({\mathcal P}^n\times{\mathcal P}^n)\times{\mathcal T}\to\reals$ by
$$H((x,y),(P,Q))=G_{Q_n(x),Q_n(y)}(P,Q).$$ Clearly, $H$ is smooth. We are now in a position to apply Lemma ~\ref{MinHess}. The compact set ${\mathcal T}_0$ plays the role of $C$ in the Lemma. We have shown that $(P_0,Q_0)$ is a critical point of $(P,Q)\mapsto H((x,y),(P,Q))$, for all $x,y\in{\mathcal P}_n$, and that  for any $x_0\in{\mathcal P}_n$, $(P_0,Q_0)$ is the location of the unique minimum of $(P,Q)\mapsto  H((x_0,x_0),(P,Q))$ on ${\mathcal T_0}$. Having also verified the condition on the Hessian, we can conclude from Lemma ~\ref{MinHess} that there exists an $\epsilon>0$ such that for all $(x,y)$ with $|x-x_0|<\epsilon$ and $|y-x_0|<\epsilon$, $H((x,y),(\cdot,\cdot))$ attains its maximum (on the domain ${\mathcal T}_0$) at the point $(P_0,Q_0)$. Thus the proof of the second statement follows, which implies at the same time the first statement.
\end{proof}

\subsection{A convex optimization problem}\label{algorithm}
In this section we will study the class of optimization problems, that consist of maximizing the function
\begin{align}
 F:\mathcal{A}&\to \R \qquad \text{ defined via }\\ 
 (A,B)&\mapsto \sum_{i=1}^{m}\sum_{j=1}^{n}\sqrt{A_{ij}B_{ij}}\Omega_{ij}.
\end{align} 
Here $\Omega_{ij}$ is any given weight matrix. Recall that the tuples $(A,B)\in \mathcal A$ are subject to the constraints \eqref{property_a}-\eqref{property_e}.
In the main result of this part we will  explicitly construct a sequence of semi-couplings that converges to an optimizer of $F$.

Our motivation for studying this class of optimization problems stems from the fact that for a particular choice of $\Omega$ it is equivalent to calculating the WFR-distance between two finitely supported measures. This will lead directly to an efficient algorithm to numerically calculate this distance. Note, that most existing methods for estimating the WFR-distance solve an entropy regularized problem. Our proposed solution instead converges to the WFR distance by performing an optimization on the polytope of discrete semi-couplings.

To see this connection between the function $F$ and the WFR-distance let $\mu=\sum\limits_{i=1}^m a_i \delta_{u_i}$ and $\nu=\sum\limits_{i=1}^n b_i \delta_{v_i}$ be two finitely supported, finite measures.
Recall from Remark~\ref{reformulateSRMM} that  the distance from $\mu$ to $\nu$ can be written as
\begin{equation}
    \operatorname{WFR}(\mu,\nu)=\left(\sum_{i=1}^{m} a_i+\sum_{j=1}^{n} b_j-2\sup_{(A,B)\in\mathcal{A}}\sum_{i=1}^{m}\sum_{j=1}^{n}\sqrt{A_{ij}B_{ij}}(u_i\cdot v_j)     \right)^{1/2}.
\end{equation}
Thus computing $\operatorname{WFR}(\mu,\nu)$ is equivalent to finding $(A,B)\in \mathcal{A}$ that achieves the supremum
\begin{equation}
    \sup_{(A,B)\in \mathcal{A}}\sum_{i=1}^{m}\sum_{j=1}^{n}\sqrt{A_{ij}B_{ij}}(u_i\cdot v_j),
\end{equation} 
which corresponds to the function $F$ with $\Omega$ being given by $\Omega_{ij}=(u_i\cdot v_j)$. 

\begin{remark}[Generalizations of the Algorithm]
In \cite{chizat2018unbalanced}, the Wasserstein Fisher Rao distance is formulated on a domain $M$ with a parameter $\rho\in (0,\infty)$ as \begin{equation} 
\frac{1}{\rho^2}\operatorname{WFR}^2_\rho(\mu,\nu)=\mu(M)+\nu(M)-2 \sup\limits_{(\gamma_0,\gamma_1)\in \Gamma(\mu,\nu)}\int_{d(x,y)<\pi} \cos(d(x,y)/\rho)d(\sqrt{\gamma_0\gamma_1})(x,y)
\end{equation}
where $\sqrt{\gamma_0\gamma_1}=(\frac{\gamma_0}{\gamma}\frac{\gamma_1}{\gamma})^{1/2}$ for $\gamma$ such that $\gamma_0,\gamma_1\ll\gamma$. Another popular distance is the so-called Gaussian Hellinger-Kantorovich distance, which can be expressed with a parameter $\rho\in (0,\infty)$ as
\begin{equation} 
\frac{1}{\rho^2}\operatorname{GHK}^2_\rho(\mu,\nu)=\mu(M)+\nu(M)-2 \sup\limits_{(\gamma_0,\gamma_1)\in \Gamma(\mu,\nu)}\int e^{\frac{-d^2(x,y)}{2\rho}}d(\sqrt{\gamma_0\gamma_1})(x,y).
\end{equation} For finitely supported measures $\mu,\nu$ these distances can be generically expressed as
\begin{equation}
    \frac{1}{\rho^2}\operatorname{Dist}^2_\rho(\mu,\nu)=\sum_{i=1}^{m} a_i+\sum_{j=1}^{n} b_j-2\sup_{(A,B)\in\mathcal{A}}\sum_{i=1}^{m}\sum_{j=1}^{n}\sqrt{A_{ij}B_{ij}}\Omega_{ij}
\end{equation} where
\begin{equation}
    \operatorname{Dist}_\rho(\mu,\nu)=\begin{cases} \operatorname{WFR}_\rho(\mu,\nu)&\text{ if }\Omega_{ij}=\cos(d(u_i,v_j)/\rho)\\\operatorname{GHK}_\rho(\mu,\nu)&\text{ if }\Omega_{ij}=e^{\frac{-d^2(u_i,v_j)}{2\rho}}\end{cases}.
\end{equation}\\ 
As our main result is that the SRNF can be written as the pullback of the WFR distance with $\rho=1$ for measures on $M=S^2$, we developed the algorithm with this particular case in mind. However, our algorithm does not depend on the particular choice of $\Omega$ and thus it could be also used for these more general situations.
\end{remark}

First we will show that the optimum of $F$ will be obtained on a set such that mass is never transported between supports $u_i$ and $v_j$ where $\Omega_{ij}$ is negative. Moreover, we show that $F$ obtains its optimum when all of the mass of a given support is created/destroyed if and only if none of it can be transported for a suitably small enough cost. In terms of the discrete semi-couplings this is represented by non-zero entries in the zeroth column of $A$ and the zeroth row of $B$. 
Therefore we define the subset $\tilde{\mathcal A}$ of $\mathcal{A}$ of all semi-couplings $(A,B)\in\mathcal{A}$ satisfying
\begin{enumerate}
    \item $A_{ij}=B_{ij}=0\operatorname{ whenever }\Omega_{ij}\leq 0, $
    \item $B_{0j}=0 \text{ if there exists } j \text{ such that }\Omega_{ij}> 0$, and
    \item $A_{i0}=0 \text{ if there exists } i \text{ such that }\Omega_{ij}> 0$.
\end{enumerate}
We will now show that the value function $F$ obtains its maximum on $\tilde{\mathcal A}$ and that $F$ is concave when restricted to this subset.

\begin{lemma} \label{ATilde}
The value function $F$ obtains its maximum on $\tilde{\mathcal A}$. Moreover, $\tilde{\mathcal A}$ is a convex, compact subset of $\R^{mn+m+n+1}$ and $F$ is concave when restricted to $\tilde{\mathcal{A}}$.
\end{lemma}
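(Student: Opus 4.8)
The plan is to establish the three claims --- that $F$ attains its maximum on $\tilde{\mathcal A}$, that $\tilde{\mathcal A}$ is convex and compact, and that $F$ is concave on $\tilde{\mathcal A}$ --- essentially in that order, since the first is a reduction lemma and the latter two are structural facts.

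For the first claim I would start from an arbitrary maximizer $(A,B)\in\mathcal A$ of $F$ (one exists by compactness of $\mathcal A$ and continuity of $F$, which I should note up front) and modify it so it lands in $\tilde{\mathcal A}$ without decreasing $F$. To arrange condition (1), observe that whenever $\Omega_{ij}\le 0$ the term $\sqrt{A_{ij}B_{ij}}\,\Omega_{ij}$ is $\le 0$, so replacing $A_{ij}$ and $B_{ij}$ by $0$ and moving that mass into $A_{i0}$ (resp.\ $B_{0j}$) preserves constraints \eqref{property_b},\eqref{property_d} and can only increase $F$. For conditions (2) and (3): suppose $A_{i0}>0$ but there is some $j$ with $\Omega_{ij}>0$. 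Since the row sum $b_j = B_{0j}+\dots+B_{mj}$ must be positive, we can shift a small amount of the destroyed mass $A_{i0}$ into $A_{ij}$ and a matching amount from $B_{0j}$ (or some other $B_{i'j}$) into $B_{ij}$; because $\sqrt{xy}$ has infinite slope at $0$, for a sufficiently small shift this strictly increases the term $\sqrt{A_{ij}B_{ij}}\,\Omega_{ij}$ — contradicting maximality, or at worst letting us reach a maximizer with $A_{i0}=0$. (Symmetrically for $B_{0j}$.) This is the step I expect to be the most delicate: one has to make the mass-shift simultaneously in $A$ and $B$, keep all constraints intact, and argue the net change is nonnegative (or strictly positive) even though $A_{ij}B_{ij}$ starts at $0$; the concavity/square-root-slope argument has to be stated carefully, perhaps by directly exhibiting an explicit feasible perturbation and computing the first-order change.

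For compactness and convexity of $\tilde{\mathcal A}$: the full set $\mathcal A$ is a closed, bounded polytope in $\R^{(m+1)(n+1)}$ cut out by the linear constraints \eqref{property_a}--\eqref{property_e} (boundedness follows since every entry of $A$ is bounded by $\sum a_i$ and every entry of $B$ by $\sum b_j$). Passing to $\tilde{\mathcal A}$ imposes only the additional constraints $A_{ij}=B_{ij}=0$ for $\Omega_{ij}\le0$, and $A_{i0}=0$ (resp.\ $B_{0j}=0$) for those rows $i$ (resp.\ columns $j$) containing a positive entry of $\Omega$ — all of these are again linear equalities, so $\tilde{\mathcal A}$ is a closed face-type subpolytope, hence convex and compact. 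I should double-check the claimed ambient dimension $mn+m+n+1=(m+1)(n+1)$ matches the count of free coordinates (it does: after the forced-zero constraints $A_{0j}=0$ and $B_{i0}=0$, the pair $(A,B)$ lives naturally in that many coordinates).

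For concavity of $F$ on $\tilde{\mathcal A}$: write $F(A,B)=\sum_{i,j}\Omega_{ij}\sqrt{A_{ij}B_{ij}}$ and note that on $\tilde{\mathcal A}$ every surviving term has $\Omega_{ij}>0$. The function $(x,y)\mapsto\sqrt{xy}$ is concave on the nonnegative quadrant (its Hessian is negative semidefinite, or: it is the geometric mean, which is jointly concave), so each term $\Omega_{ij}\sqrt{A_{ij}B_{ij}}$ is a nonnegative multiple of a concave function of $(A_{ij},B_{ij})$, hence concave in $(A,B)$; a sum of concave functions is concave. I would close by remarking that this concavity, together with the convexity of the feasible set $\tilde{\mathcal A}$, is exactly what guarantees convergence of the coordinate-descent algorithm developed in the next section.
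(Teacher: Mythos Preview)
Your treatment of convexity/compactness of $\tilde{\mathcal A}$ and of the concavity of $F$ is correct and essentially matches the paper (the paper checks that the second derivative of $t\mapsto\sqrt{(pt+q)(yt+z)}$ is $\le 0$ along every line, which is equivalent to your joint-concavity-of-the-geometric-mean argument, and is arguably less clean). Your handling of condition (1) is also the same as the paper's.

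There is, however, a genuine gap in your argument for conditions (2) and (3). The ``infinite slope of $\sqrt{xy}$ at $0$'' claim is false in the situation you are describing: if initially $A_{ij}=B_{ij}=0$ and you shift $\epsilon$ into each, the new contribution is $\sqrt{\epsilon\cdot\epsilon}\,\Omega_{ij}=\epsilon\,\Omega_{ij}$, which has \emph{finite} slope $\Omega_{ij}$ in $\epsilon$. (Infinite slope occurs only when one coordinate is already positive and you perturb the other.) Meanwhile, if the $\epsilon$ of $B$-mass is taken from an entry $B_{i'j}$ with $A_{i'j}>0$, $B_{i'j}>0$, $\Omega_{i'j}>0$, that term decreases at first-order rate $\tfrac12\sqrt{A_{i'j}/B_{i'j}}\,\Omega_{i'j}$, which can certainly exceed $\Omega_{ij}$. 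So your simultaneous $A$--$B$ perturbation may strictly \emph{decrease} $F$, and neither ``contradicting maximality'' nor ``at worst reach another maximizer'' follows.

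The paper avoids this by a simpler, one-sided move that you should use instead: do not touch $B$ at all when fixing $A_{i0}$. For each row $i$ having some $\Omega_{ij}>0$, pick $k_i=\argmax_{k}\Omega_{ik}$ and transfer all of $A_{i0}$ into $A_{ik_i}$; symmetrically, transfer $B_{0j}$ into $B_{l_jj}$ where $l_j=\argmax_l\Omega_{lj}$. Since $A_{i0}$ and $B_{0j}$ do not appear in $F$, and the receiving entries have $\Omega>0$, each such transfer weakly increases $F$ regardless of the value of the companion entry in the other matrix. This already produces a point of $\tilde{\mathcal A}$ with $F$-value at least that of the original, which is all that is needed; no strict-increase or contradiction argument is required.
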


\begin{proof}
To show this statement, let $(A,B)\in\mathcal{A}$. We will construct a new semi-coupling $(\tilde{A},\tilde{B})\in\tilde{\mathcal A}$ such that $F(A,B)\leq F(\tilde{A},\tilde{B})$. Begin by letting $(\tilde{A},\tilde{B})=(A,B)$. Next for each $i\in\{1,...,m\}$ define 
\begin{equation}
    k_i=\begin{cases}\argmax\limits_{k\in\{1,...,n\}}\Omega_{ik}&\text{if  } \max\limits_{k\in\{1,...,n\}} \Omega_{ij}>0\\0&\text{otherwise}\end{cases}
\end{equation}
and for each $j=\{1,...,n\}$ define
\begin{equation}
    l_j=\begin{cases}\argmax\limits_{l\in\{1,...,m\}}\Omega_{lj}&\text{if  } \max\limits_{l\in\{1,...,m\}} \Omega_{lj}>0\\0&\text{otherwise}\end{cases}.
\end{equation}

\textbf{Step 1: }If $\Omega_{ij}<0$, we modify $\tilde{A}_{ij}$ and $\tilde{B}_{ij}$ as follows:
\begin{itemize}
    \item Replace $\tilde{A}_{ik_i}$ by $A_{ik_i}+A_{ij}$ and replace $\tilde{A}_{ij}$ by $0$.
    \item Replace $\tilde{B}_{l_jj}$ by $B_{l_jj}+B_{ij}$ and replace $\tilde{B}_{ij}$ by $0$.
\end{itemize}
It is clear that the new point satisfies property (1). In the next stage of our modification we will ensure that our semi-coupling also satisfies (2) and (3). 

\textbf{Step 2:}
\begin{itemize}
    \item If $k_i\neq0$, then replace $\tilde{A}_{ik_i}$ by $A_{ik_i}+A_{i0}$ and $\tilde{A}_{i0}$ by $0$. 
    \item If $l_j\neq0$, then replace $\tilde{B}_{l_jj}$ by $B_{l_jj}+B_{0j}$ and $\tilde{B}_{0j}$ by $0$.
\end{itemize}
It is clear that $(\tilde A, \tilde B)$ is an element of $\tilde{\mathcal A}$ and that $(\tilde A, \tilde B)=(A,B)$ if $(A,B)$ was already in $\tilde{\mathcal A}$.
Moreover, for each $i\in\{1,...,m\}$ and $j=\{1,...,n\}$, \begin{equation}A_{ij}B_{ij}\Omega_{ij}\leq\tilde A_{ij}\tilde B_{ij}\Omega_{ij}\end{equation} and thus $F(A,B)\leq F(\tilde A, \tilde B)$. In fact, it is clear that the value of $F$ steadily increases along the line from the original point to the new point. This contradicts the assumption that $F$ attained a local maximum at the original point. Thus $F$ obtains its maximum on the subset $\tilde{\mathcal A}$. A parametrized straight line in $\tilde{\mathcal A}$ will be of the form $\{p_{ij}t+q_{ij}, y_{ij}t+z_{ij}\}$. If we restrict $F$ to such a line, it will be a linear combination of functions of the form $\sqrt{(pt+q)(yt+z)}$, with positive coefficients. It is easy to verify that a function of this form always has a second derivative $\leq 0$ for all values of $t$ for which it is defined (i.e., for all values of $t$ for which the quantity under the square root sign is $\geq 0$). It follows that along any line in the domain, either $F$ has at most one local maximum, or it is constant along that line. Therefore, if $F$ has two local maxima on its domain, then $F$ must be constant on the entire line through these two maxima. Thus, $F$ is concave when restricted to $\tilde{\mathcal{A}}$.
\end{proof}

Thus, we have reformulated our optimization problem as finding the maximum of a concave function over a convex set (or equivalently the minimum of a convex function over a convex set). Next, we will introduce two operators on $\tilde{\mathcal A}$:
First, let 
\begin{equation}
T_1:\tilde{\mathcal A}\to \tilde{\mathcal A}\text{ is defined via }(A,B)\mapsto (E,B)\text{ where }
\end{equation}
\begin{equation}
    E_{ij}=\begin{cases}
    A_{ij} &\text{if  $i=0$ or $j=0$}\\
    \frac{a_i B_{ij}\Omega_{ij}^2}{\bsum{k=1}{n}B_{ik}\Omega_{ik}^2}&\text{if } \bsum{k=1}{n}B_{ik}\Omega_{ik}^2>0\\0& \text{otherwise}\end{cases}.
\end{equation}
Similarly let 
\begin{equation}
    T_2:\tilde{\mathcal A}\to \tilde{\mathcal A}\text{ be defined via
    }(A,B)\mapsto (A,E)\text{ where }
\end{equation}
\begin{equation}
    E_{ij}=\begin{cases}
    B_{ij} &\text{if  $i=0$ or $j=0$}\\
    \frac{b_j A_{ij}\Omega_{ij}^2}{\bsum{k=1}{m}A_{kj}\Omega_{kj}^2}&\text{if } \bsum{k=1}{m}A_{kj}\Omega_{kj}^2>0\\0&\text{otherwise }\end{cases}.
\end{equation}

We will define a sequence of semi-couplings recursively by initializing the sequence at some $(A, B)_0$
 in the interior of $ \tilde{\mathcal A}$ and iteratively updating the two components of this semi-coupling via $T_1$ and $T_2$. The primary goal of this section  will be to show that any limit point of this sequence is a maximizer of $F$, which is formalized in the following theorem:
\begin{theorem}\label{convergence}
Define a sequence of semi-couplings via  $(A,B)_{k+1}:=T_2(T_1((A,B)_k))$ where $(A,B)_0$ is in arbitrary chosen initialization from the interior of $\tilde{\mathcal A}$. Then the real-valued sequence given by $F((A,B)_{k})$ converges to the optimum of $F$, i.e.,
\begin{equation}\lim\limits_{k\to \infty}F((A,B)_{k})=\argmax\limits_{(A,B)\in\tilde{\mathcal A}}F(A,B).\end{equation} 
\end{theorem}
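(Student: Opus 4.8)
The plan is to set up a coordinate-descent / alternating-maximization argument in the spirit of the Sinkhorn–type analyses, but adapted to the fact that the objective $F$ is concave (Lemma~\ref{ATilde}) rather than an entropy. First I would establish the two key facts about the operators $T_1$ and $T_2$: namely that each is a \emph{partial maximizer}, i.e.\ for any $(A,B)\in\tilde{\mathcal A}$ the pair $T_1(A,B)=(E,B)$ satisfies $F(E,B)=\max\{F(A',B): (A',B)\in\tilde{\mathcal A}\}$, and symmetrically for $T_2$ in the second slot. This is exactly the content of the ``explicit solution with fixed second (first) matrix'' mentioned in the introduction around Lemma~\ref{decreasing}: with $B$ fixed, maximizing $\sum_{ij}\sqrt{A_{ij}B_{ij}}\Omega_{ij}$ subject to the row constraints $\sum_j A_{ij}=a_i$ decouples over $i$, and a Lagrange-multiplier computation (or Cauchy--Schwarz, since $\sum_j \sqrt{A_{ij}}\cdot(\sqrt{B_{ij}}\Omega_{ij})\le (\sum_j A_{ij})^{1/2}(\sum_j B_{ij}\Omega_{ij}^2)^{1/2}$) shows the maximum is attained precisely at $A_{ij}\propto B_{ij}\Omega_{ij}^2$, normalized to sum to $a_i$ — which is the formula defining $T_1$. (One must check the edge cases where $\sum_k B_{ik}\Omega_{ik}^2=0$, and that membership in $\tilde{\mathcal A}$, not just $\mathcal A$, is preserved, using the definition of $\tilde{\mathcal A}$ and the sign conditions on $\Omega$.) Consequently $F((A,B)_{k})$ is a nondecreasing sequence, bounded above (compactness of $\tilde{\mathcal A}$), hence convergent; call its limit $F^\ast$. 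It remains to identify $F^\ast$ with the global maximum.

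Next I would extract a convergent subsequence $(A,B)_{k_\ell}\to(A^\ast,B^\ast)\in\tilde{\mathcal A}$ by compactness. By continuity of $F$, $F(A^\ast,B^\ast)=F^\ast$, and also $F(T_2T_1(A^\ast,B^\ast))=F^\ast$ (since $F((A,B)_{k_\ell+1})\to F^\ast$ too and $T_2T_1$ is continuous on a neighborhood — here a small technical point about continuity of $T_1,T_2$ at the boundary must be dealt with, or circumvented by showing the iterates stay in a compact subset of the interior, or by an $\varepsilon$-argument). The partial-maximality of $T_1$ and $T_2$ then forces: $F(A^\ast,B^\ast)\ge F(A',B^\ast)$ for all admissible $A'$, and $F(A^\ast,B^\ast)\ge F(A^\ast,B')$ for all admissible $B'$ — that is, $(A^\ast,B^\ast)$ is a coordinatewise maximum. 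The final step is the standard convexity principle: a coordinatewise maximum of a concave function over a product of convex sets is a global maximum. Concretely, for any $(A,B)\in\tilde{\mathcal A}$, concavity along the segment from $(A^\ast,B^\ast)$ to $(A,B)$ combined with the fact that the directional derivative there is a sum of the two partial directional derivatives — each $\le 0$ by coordinatewise optimality — gives $F(A,B)\le F(A^\ast,B^\ast)=F^\ast$. Hence $F^\ast=\max_{\tilde{\mathcal A}}F$, and since the whole sequence $F((A,B)_k)$ is monotone with this value as a subsequential limit, it converges to it.

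The step I expect to be the main obstacle is not the convexity argument (which is routine once partial maximality is in hand) but the \textbf{boundary behavior} of the operators $T_1, T_2$: they are defined by cases precisely because denominators $\sum_k B_{ik}\Omega_{ik}^2$ can vanish, and $F$ is only differentiable where the relevant products $A_{ij}B_{ij}$ are positive, so the ``directional derivative is a sum of partials'' reasoning needs care where coordinates hit zero. I would handle this either by (i) a one-sided/subgradient version of the coordinatewise-optimality implies global-optimality argument, valid for concave functions on convex sets regardless of smoothness, using the definition of $\tilde{\mathcal A}$ to rule out the pathological vanishing cases (the sign conditions ensure that whenever $\Omega_{ij}>0$ the optimal transport actually uses those slots, so the denominators stay positive for indices that matter), or (ii) by first proving that starting from an interior point the iterates remain in a fixed compact subset of the relative interior of $\tilde{\mathcal A}$ on which $T_1, T_2$ are continuous and $F$ is smooth, and then only passing to the boundary in the limit. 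Either route closes the argument; the second is cleaner if the needed interior invariance can be shown, which I suspect follows from the explicit form of $T_1, T_2$ plus the structure of $\tilde{\mathcal A}$.
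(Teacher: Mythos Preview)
Your proposal is correct and follows essentially the same route as the paper: partial maximality of $T_1,T_2$ (this is Lemma~\ref{decreasing}), monotonicity and compactness to extract a limit point, then coordinatewise optimality plus concavity to conclude global optimality (the paper phrases the last step via the first-order optimality condition of \cite{bertsekas1997nonlinear} rather than arguing along segments, but the content is the same). On the boundary issue you correctly flag as the main obstacle, the paper's resolution (Lemma~\ref{fixedpoint}) is simpler than either of your proposed routes: one does not need $T_1$ itself to be continuous, nor to confine the iterates to a compact subset of the interior; it suffices that the \emph{composition} $F\circ T_1$ be continuous on all of $\tilde{\mathcal A}$, and this is immediate from the Cauchy--Schwarz computation in the proof of Lemma~\ref{decreasing}, which gives $F(T_1(A,B))=\sum_i \sqrt{a_i\sum_j B_{ij}\Omega_{ij}^2}$ --- a manifestly continuous function of $B$ even where the denominators defining $T_1$ vanish.
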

The key ingredient for the proof of this statement is the observation that applying $T_1$ maximizes $F$ when we fix the second component and similarly for $T_2$ when fixing the first component. This will allow us to apply results from coordinate descent analysis to obtain the proof of the above theorem. This will be the content of the following lemma:
%The remainder of this section will consist of technical lemmas and definitions that lead to the proof of this claim. First note that $T_1(\tilde{\mathcal A}^{\mathrm{o}})\subseteq \tilde{\mathcal A}^{\mathrm{o}}$ and $T_2(\tilde{\mathcal A}^{\mathrm{o}})\subseteq \tilde{\mathcal A}^{\mathrm{o}}$. Thus for all $k\in\mathbb{N}$, $(A,B)_{k}\in\tilde{\mathcal A}^{\mathrm{o}}$. 
%We will now show, that applying $T_1$ or $T_2$ increases the value of $F$. 

\begin{lemma}\label{decreasing} 
The operators $T_i$ restrict to operators on the interior of $\tilde{\mathcal A}$, i.e., 
$T_i(\tilde{\mathcal A}^{\mathrm{o}})\subset \tilde{\mathcal A}^{\mathrm{o}}$. 
For a fixed $(A,B)$ in the interior $\tilde{\mathcal A}$ let $\tilde{\mathcal A}_{(-,B)}$ be the space of semi-couplings where the second factor is equal to $B$ and $\tilde{\mathcal A}_{(A,-)}$ be the space of semi-couplings where the first factor is equal to $A$. Thus, 
\begin{enumerate}
     \item $T_1(A,B)$ uniquely attains $\sup\limits_{(A^{'},B)\in \tilde{\mathcal A}_{(-,B)}}F(A^{'},B)$
    \item  $T_2(A,B)$ uniquely attains $\sup\limits_{(A,B^{'})\in \tilde{\mathcal A}_{(A,-)}}F(A,B^{'})$
\end{enumerate}
\end{lemma}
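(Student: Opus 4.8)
The plan is to reduce both assertions to one elementary fact: the maximum of a function of the form $x\mapsto\sum_j\sqrt{x_j}\,w_j$ with weights $w_j>0$ over a simplex $\{x\ge 0,\ \sum_j x_j=a\}$ is attained at the unique point $x_j=a\,w_j^2/\sum_k w_k^2$, which follows from Cauchy--Schwarz after the substitution $y_j=\sqrt{x_j}$ (maximizing a linear functional over the positive part of a sphere). First I would describe the (relative) interior of $\tilde{\mathcal A}$ explicitly: since $\tilde{\mathcal A}$ is cut out of an affine subspace by the inequalities $A_{ij},B_{ij}\ge 0$, a point $(A,B)$ lies in $\tilde{\mathcal A}^{\mathrm{o}}$ exactly when $A_{ij}>0$ and $B_{ij}>0$ for all $i,j\ge 1$ with $\Omega_{ij}>0$; the zeroth-row and zeroth-column entries, and the entries with $\Omega_{ij}\le 0$, are forced to fixed values by the defining equalities, so they impose no openness condition. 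It helps to split $\{1,\dots,m\}$ into the \emph{active} rows $i$, those with $J_i:=\{j:\Omega_{ij}>0\}\ne\varnothing$ — for which property~(3) of $\tilde{\mathcal A}$ forces $A_{i0}=0$, hence $\sum_{j\in J_i}A_{ij}=a_i$ — and the \emph{inactive} rows, for which property~(1) forces $A_{ij}=0$ for $j\ge 1$ and the row-sum constraint then forces $A_{i0}=a_i$, so an inactive row of $A$ is completely determined; the columns of $B$ split analogously, and an inactive row of $A$ moreover has $B_{ik}=0$ for all $k\in\{0,\dots,n\}$.

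Next I would verify $T_1(\tilde{\mathcal A}^{\mathrm{o}})\subseteq\tilde{\mathcal A}^{\mathrm{o}}$ (the claim for $T_2$ is proved identically, exchanging the roles of rows and columns, of $A$ and $B$, and of $a$ and $b$). Write $(E,B)=T_1(A,B)$; since $T_1$ fixes $B$ and the zeroth row and column, only the rows $E_{i\bullet}$ with $i\ge 1$ need checking. For an active row $i$, interiority gives $B_{ik}>0$ for $k\in J_i$, so $\sum_{k=1}^n B_{ik}\Omega_{ik}^2=\sum_{k\in J_i}B_{ik}\Omega_{ik}^2>0$ and we land in the first branch of the definition of $E$; then $E_{ij}>0$ precisely for $j\in J_i$, $\sum_{j=1}^n E_{ij}=a_i$, and $E_{i0}=A_{i0}=0$, which is exactly the constraint system of $\tilde{\mathcal A}$ together with the strict positivity defining the interior. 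For an inactive row $i$ all the $B_{ik}$ vanish, so we land in the ``otherwise'' branch and $E_{i\bullet}=(a_i,0,\dots,0)$, the forced value. Hence $(E,B)\in\tilde{\mathcal A}^{\mathrm{o}}$.

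Finally, for the optimality claims, fix $(A,B)\in\tilde{\mathcal A}^{\mathrm{o}}$. Because $B_{ij}=0$ whenever $\Omega_{ij}\le 0$, we have $F(A',B)=\sum_{i=1}^m\sum_{j\in J_i}\sqrt{A'_{ij}}\,\big(\sqrt{B_{ij}}\,\Omega_{ij}\big)$ for every $(A',B)\in\tilde{\mathcal A}_{(-,B)}$, and the admissible set for $A'$ is a product over rows — each active row ranging over the simplex $\{A'_{ij}\ge 0:\ j\in J_i,\ \sum_{j\in J_i}A'_{ij}=a_i\}$ (the other entries of the row forced to $0$), each inactive row a single point. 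So it suffices to maximize, separately for each active $i$, the map $(A'_{ij})_{j\in J_i}\mapsto\sum_{j\in J_i}\sqrt{A'_{ij}}\,w_j$ with $w_j=\sqrt{B_{ij}}\,\Omega_{ij}>0$ over that simplex; by the Cauchy--Schwarz fact above the unique maximizer is $A'_{ij}=a_i w_j^2/\sum_k w_k^2=a_iB_{ij}\Omega_{ij}^2/\sum_k B_{ik}\Omega_{ik}^2=E_{ij}$, the strict positivity of the $w_j$ (a consequence of $(A,B)$ being interior) being precisely what forces the maximizer into the positive orthant and makes it unique. Assembling the rows shows $(E,B)=T_1(A,B)$ is the unique maximizer of $F(\cdot,B)$ on $\tilde{\mathcal A}_{(-,B)}$, which is assertion~(1); the transpose argument over columns gives assertion~(2). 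I expect the only mildly delicate points to be the exact description of $\tilde{\mathcal A}^{\mathrm{o}}$ — this is exactly what guarantees the relevant denominators are positive and the Cauchy--Schwarz weights strictly positive, hence uniqueness — and the bookkeeping for the inactive rows and columns; everything else is routine.
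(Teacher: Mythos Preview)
Your proof is correct and follows essentially the same route as the paper: row-by-row application of the Cauchy--Schwarz inequality to $\sum_j\sqrt{A'_{ij}}\cdot\sqrt{B_{ij}}\,\Omega_{ij}$, with the equality case identifying the unique maximizer as $E_{ij}=a_iB_{ij}\Omega_{ij}^2/\sum_kB_{ik}\Omega_{ik}^2$. Your write-up is in fact more careful than the paper's on two points---you explicitly characterize the relative interior of $\tilde{\mathcal A}$ and actually verify the inclusion $T_i(\tilde{\mathcal A}^{\mathrm o})\subset\tilde{\mathcal A}^{\mathrm o}$, and your active/inactive row bookkeeping makes the uniqueness claim for inactive rows transparent (these rows are forced, so contribute no freedom)---whereas the paper handles the inactive case via the vanishing of $\rho_i$ and leaves the interior-preservation implicit.
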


\begin{proof}

Let$(A,B)$ be in the interior $\tilde{\mathcal A}$. Thus, for any $i=1,...,m$ and $j=1,...n$ such that $\Omega_{ij}>0$ we have that $A_{ij}>0$ and $B_{ij}>0$. We need to show that for any semi-coupling such that the second matrix is equal to $B$, $F(A^{'},B)\leq F\left(T_1(A,B)\right)$ where equality holds if and only if $(A^{'},B)=T_1(A,B)$. 

Therefore, let $(A^{'},B)$ be an arbitrary semi-coupling such that the second matrix is equal to $B$ and let $(E,B)=T_1(A^{'},B)$.  
For $1\leq i \leq m$ we let
\begin{equation}
\rho_i=\sum\limits_{j=1}^{n} B_{ij}\Omega_{ij}^2\;.
\end{equation}
Consider the case where $\rho_i=0$. Since $(A,B)$ is in $\tilde{\mathcal{A}}$, $B_{ij}\geq 0$ and $B_{ij}=0$ if and only if $\Omega_{ij}^2=0$. Thus, $B_{ij}=0$ for all $1\leq j\leq n$. Thus,
\begin{align}
    \bsum{j=1}{n}\sqrt{A^{'}_{ij}B_{ij}}\Omega_{ij}=\bsum{j=1}{n}\sqrt{E_{ij}B_{ij}}\Omega_{ij}=0,
\end{align}
It remains to prove the statement for the case that $\rho_i>0$.  By Cauchy-Schwarz we have
\begin{align}
    \bsum{j=1}{n}\sqrt{A^{'}_{ij}B_{ij}}\Omega_{ij}&\leq \sqrt{\left(\bsum{j=1}{n}A^{'}_{ij}\right)\left(\bsum{j=1}{n}B_{ij}\Omega_{ij}^2\right) }  = \sqrt{a_i\bsum{j=1}{n}B_{ij}\Omega_{ij}^2 } \\
    &= \frac{\sqrt{a_i}\bsum{j=1}{n}B_{ij}\Omega_{ij}^2}{\sqrt{\left(\bsum{j=1}{n}B_{ij}\Omega_{ij}^2\right) }}  =\bsum{j=1}{n}\sqrt{\frac{a_iB_{ij}\Omega_{ij}^2}{\bsum{k=1}{n} B_{ik}\Omega_{ik}^2}B_{ij}}\Omega_{ij}=\bsum{j=1}{n}\sqrt{E_{ij}B_{ij}}\Omega_{ij}.
\end{align}
Therefore, 
\begin{align}
F(A^{'},B)&=\bsum{i=1}{m}\bsum{j=1}{n}\sqrt{A^{'}_{ij}B_{ij}}\Omega_{ij}\leq\bsum{i=1}{m}\bsum{j=1}{n}\sqrt{E_{ij}B_{ij}}\Omega_{ij}=F\left(T_1(A,B)\right).
\end{align}
Note that this inequality is strict unless for each $1\leq i \leq m$ such that \begin{equation}\sum\limits_{k=1}^{n} B_{ik}\Omega_{ik}^2>0,\end{equation}
$\{A^{'}_{ij}\}$ is a scalar multiple of $\{B_{ij}\Omega_{ij}^2\}$. Since we know $\bsum{j=1}{n}A^{'}_{ij}=a_i$, equality holds if and only if 
  \begin{equation}
      \{A^{'}_{ij}\}=\left\{\frac{a_i B_{ij}\Omega_{ij}^2}{\sum_{k=1}^{n}B_{ik}\Omega_{ik}^2}\right\}=\{E_{ij}\}.
  \end{equation}
The proof of (2) follows by a symmetric argument. 
\end{proof}

Finally, we need to observe the effect of $T_1$ and $T_2$ on the value of $F$ at the limit points of our sequence. This is not immediate because our previous results require that our semi-coupling is in the interior of $\tilde{\mathcal{A}}$, but the limit point of our sequence could lie on the boundary of $\tilde{\mathcal{A}}$.

\begin{lemma}\label{fixedpoint}
If  $(\overline{A},\overline{B})$ is a limit point of $(A,B)_{k}$, then \begin{equation}F(T_1(\overline{A},\overline{B}))=F(T_2(T_1(\overline{A},\overline{B})))=F(\overline{A},\overline{B}).\end{equation}
\end{lemma}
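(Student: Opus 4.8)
The plan is to reduce the statement to two closed-form expressions for $F\circ T_1$ and $F\circ T_2$, and then to combine a monotonicity argument along the sequence with a lower-semicontinuity argument at the limit point. First I would record that for every $(A,B)\in\tilde{\mathcal A}$, carrying out the Cauchy--Schwarz equality-case computation already present in the proof of Lemma~\ref{decreasing} — together with the fact that the entries of $A$ and $B$ vanish wherever $\Omega_{ij}\le 0$ — yields
\[
  F(T_1(A,B))=\sum_{i=1}^{m}\sqrt{a_i\sum_{j=1}^{n}B_{ij}\Omega_{ij}^2},
  \qquad
  F(T_2(A,B))=\sum_{j=1}^{n}\sqrt{b_j\sum_{i=1}^{m}A_{ij}\Omega_{ij}^2}.
\]
In particular $F\circ T_1$ is a continuous function of the second matrix $B$ alone, and $F\circ T_2$ is a continuous function of the first matrix $A$ alone.

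Next I would set up the monotonicity. Since $(A,B)_0$ lies in $\tilde{\mathcal A}^{\mathrm{o}}$ and $T_1,T_2$ preserve the interior (Lemma~\ref{decreasing}), every iterate $(A,B)_k$ and every intermediate point $T_1((A,B)_k)$ stays in $\tilde{\mathcal A}^{\mathrm{o}}$, so Lemma~\ref{decreasing} gives $F((A,B)_k)\le F(T_1((A,B)_k))\le F(T_2(T_1((A,B)_k)))=F((A,B)_{k+1})$. Hence $F((A,B)_k)$ is non-decreasing and bounded above by $\max_{\tilde{\mathcal A}}F$, so it converges to some $F^*$, and by squeezing $F(T_1((A,B)_k))\to F^*$ and $F((A,B)_{k+1})\to F^*$ as well. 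Fixing a subsequence $(A,B)_{k_\ell}\to(\overline A,\overline B)$, continuity of $F$ gives $F(\overline A,\overline B)=F^*$, so it suffices to show $F(T_1(\overline A,\overline B))=F(T_2(T_1(\overline A,\overline B)))=F^*$. The first equality is immediate from the closed form: the second matrices of $(A,B)_{k_\ell}$ converge to $\overline B$ and $F\circ T_1$ is continuous in the second matrix, so $F(T_1((A,B)_{k_\ell}))\to F(T_1(\overline A,\overline B))$, while the left-hand side tends to $F^*$.

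The main obstacle will be the $T_2\circ T_1$ identity, since this composition is \emph{not} continuous: the first matrix of $T_1(A,B)$ is obtained by normalizing, for each $i$, the vector $(B_{ij}\Omega_{ij}^2)_j$, and this degenerates at matrices $B$ for which $\sum_j B_{ij}\Omega_{ij}^2=0$ for some $i$ — exactly the boundary behaviour flagged before the lemma. I expect to get around this by proving that $F\circ T_2\circ T_1$ is lower semicontinuous on $\tilde{\mathcal A}$: writing $F(T_2(T_1(A,B)))=\sum_j\sqrt{b_j\sum_i E_{ij}\Omega_{ij}^2}$ with $E$ the first matrix of $T_1(A,B)$, each map $B\mapsto E_{ij}\Omega_{ij}^2$ is continuous where $\sum_k B_{ik}\Omega_{ik}^2>0$ and has nonnegative lower limit at the degenerate points (where its assigned value is $0$), and composing with the continuous nondecreasing maps $t\mapsto\sqrt{b_j t}$ and summing preserves lower semicontinuity. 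This yields $F(T_2(T_1(\overline A,\overline B)))\le\liminf_\ell F(T_2(T_1((A,B)_{k_\ell})))=\lim_\ell F((A,B)_{k_\ell+1})=F^*$. For the reverse inequality I would write $T_1(\overline A,\overline B)=(\overline C,\overline B)$, observe that the entries of $\overline C$ still vanish where $\Omega_{ij}\le 0$, and apply the columnwise Cauchy--Schwarz estimate underlying Lemma~\ref{decreasing}: this gives $F^*=F(T_1(\overline A,\overline B))=F(\overline C,\overline B)\le\sum_j\sqrt{b_j\sum_i\overline C_{ij}\Omega_{ij}^2}$, whose right-hand side equals $F(T_2(\overline C,\overline B))=F(T_2(T_1(\overline A,\overline B)))$ by the closed form above. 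Combining the two inequalities gives $F(T_2(T_1(\overline A,\overline B)))=F^*=F(\overline A,\overline B)$, which finishes the proof. The only genuine subtlety is the boundary discontinuity of $T_1$, hence of the composition, which is precisely why a lower-semicontinuity bound, and not a naive continuity argument, is required in the last step.
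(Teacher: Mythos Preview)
Your argument is correct. For the first equality $F(T_1(\overline A,\overline B))=F(\overline A,\overline B)$, you and the paper do essentially the same thing: combine the monotonicity $F((A,B)_{k})\le F(T_1((A,B)_k))\le F((A,B)_{k+1})$ with continuity of $F\circ T_1$; the closed-form identity $F(T_1(A,B))=\sum_i\sqrt{a_i\sum_j B_{ij}\Omega_{ij}^2}$ that you write down is exactly what justifies the paper's one-line assertion ``since $F\circ T_1$ is continuous.'' The genuine difference is in the second equality. The paper merely says the proof of $F(T_2(T_1(\overline A,\overline B)))=F(\overline A,\overline B)$ ``follows by a similar argument,'' which read literally would need continuity of $F\circ T_2\circ T_1$; as you correctly point out, this composition can fail to be continuous at boundary points where some $\rho_i=\sum_j B_{ij}\Omega_{ij}^2$ vanishes, since the normalization in $T_1$ degenerates there. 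Your replacement --- the upper bound via lower semicontinuity of $B\mapsto\sum_j\sqrt{b_j\sum_i E_{ij}\Omega_{ij}^2}$ (each $E_{ij}\ge 0$ with value $0$ at degenerate points, hence lsc), and the lower bound via the columnwise Cauchy--Schwarz estimate $F(\overline C,\overline B)\le\sum_j\sqrt{b_j\sum_i\overline C_{ij}\Omega_{ij}^2}=F(T_2(\overline C,\overline B))$, which holds on all of $\tilde{\mathcal A}$ and not just its interior --- is a cleaner and more rigorous way to close the sandwich. So your route is not merely different but actually patches a point the paper glosses over.
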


\begin{proof}
Let  $(\overline{A},\overline{B})$ be a limit point of $(A,B)_{k}$ and $(A,B)_{k_j}$ be a subsequence that converges to $(\overline{A},\overline{B})$.  Let $\epsilon > 0$.  Since $F\circ T_1$ is continuous, there exists a $\delta>0$ such that if $\|(A^{'},B^{'})-(\overline{A},\overline{B})\|<\delta$ then $|F(T_1(A^{'},B^{'}))-F(T_1(\overline{A},\overline{B}))|<\epsilon/3$.

Since $(A,B)_{k_j}$ converges to $(\overline{A},\overline{B})$, it follows from the continuity of $F$ that $F((A,B)_{k_j})$ converges to $F(\overline{A},\overline{B})$. Thus, there exists $k_j$ such that 
\begin{equation}
    \|(A,B)_{k_j}-(\overline{A},\overline{B})\|<\delta\text{ and }|F((A,B)_{k_j})-F(\overline{A},\overline{B})|<\epsilon/3.
\end{equation}
Recall, $F((A,B)_{k_j})\leq F(T_1((A,B)_{k_j})) \leq F(\overline{A},\overline{B})$. Thus, \begin{equation}
    |F(T_1((A,B)_{k_j}\}))-F((A,B)_{k_j})|\leq|F((A,B)_{k_j})-F(\overline{A},\overline{B})|<\epsilon/3.
\end{equation}
Therefore,

\begin{align}
  &  |F(T_1(\overline{A},\overline{B}))-F(\overline{A},\overline{B})|\leq|F(T_1(\overline{A},\overline{B}))-F(T_1((A,B)_{k_j}))|\\
   & \qquad\qquad +|F(T_1((A,B)_{k_j}))-F((A,B)_{k_j})|+|F((A,B)_{k_j})-F(\overline{A},\overline{B})|\\
    &\qquad<\epsilon/3+\epsilon/3+\epsilon/3=\epsilon
\end{align}
The proof that $F(T_2(T_1(\overline{A},\overline{B})))=F(\overline{A},\overline{B})$ follows by a similar argument.
\end{proof}

We can now proceed with the proof of Theorem ~\ref{convergence}.
\begin{proof}[Proof of Theorem  ~\ref{convergence}]
Note that for every $k\in \N$, $F((A,B)_{k+1})\geq F((A,B)_{k})$. Since $F$ is bounded above. The sequence defined by  $F((A,B)_{k})$ converges.

Let $(\overline{A},\overline{B})$ be a limit point of $(A,B)_{k}$. From  Lemma~\ref{decreasing}, we have $F(T_1((A,B)_k))\geq F(A^{'},B_k),$ for each  $(A^{'},B_k) \in \tilde{\mathcal A}$  where $B_k$ is fixed. Taking the limit of this inequality as $k\to \infty$, we obtain $F(T_1(\overline{A},\overline{B}))\geq F(A^{'},\overline{B}),$  for each $(A^{'},\overline{B}) \in \tilde{\mathcal A}$ where $\overline{B}$ is fixed. By Lemma~\ref{fixedpoint}, $F(T_1(\overline{A},\overline{B}))=F(\overline{A},\overline{B})$.

By the optimality condition (Prop. 2.1.2 in Section 2.1 of ~\cite{bertsekas1997nonlinear}) on convex sets, $\nabla_1 F(\overline{A},\overline{B}) (\overline{A}-A^{'})\leq 0$ for all possible $A^{'}$. Here $\nabla_1 F$ is the gradient of $F$ with respect to the first block. A similar argument shows that, $\nabla_2 F(\overline{A},\overline{B}) (\overline{B}-B^{'})\leq 0$ for all possible $B^{'}$.  Here $\nabla_2 F$ is the gradient of $F$ with respect to the second block.

Combining the inequalities and using the product structure of $\tilde{\mathcal A}$, we obtain the statement $\nabla F(\overline{A},\overline{B}) ((\overline{A},\overline{B})-(A^{'},B^{'}))\leq 0$ for all $(A^{'},B^{'})$ in $\tilde{\mathcal A}$. Thus by the optimality condition of concave functions on convex sets, $(\overline{A},\overline{B}) =\argmax\limits_{(A,B)\in\tilde{\mathcal A}}F(A,B).$
\end{proof}

\subsection{Implementation and Experiments}\label{implementation}
The theory developed in the previous section directly gives rise to an algorithm for computing the WFR distance. We outline this procedure in Algorithm~\ref{alg:WFR_iter}.
A PyTorch implementation of our algorithm is open source available at
\begin{center}
\url{https://github.com/emmanuel-hartman/WassersteinFisherRaoDistance}  
\end{center}
Unlike the Sinkhorn-type methods proposed in ~\cite{chizat2018scaling} and implemented in~\cite{flamary2021pot}, our method computes the Wasserstein-Fisher-Rao distance without any entropic regularization.In this implementation, we assume that for all $j$ there exists $i$ such that $\Omega_{ij}>0$ and  for all $i$ there exists $j$ such that $\Omega_{ij}>0$. As such, the zeroth rows and columns of the optimal discrete semi-coupling will have all zero entries, so we omit them from our implementation. Our reason for making this assumption on $\Omega_{ij}$ is that our main application of this algorithm is to compute SRNF distances between closed surfaces. For closed surfaces, this assumption will always be valid. (See Section 3 of this paper for more details.) 

We implemented Alg.~\ref{alg:WFR_iter} using PyTorch and perform the computations on the GPU. The main operations in our algorithm consist of element-wise matrix multiplication, which leads to a quadratic complexity. This can be also observed in the computation times in Table~\ref{fig:ex_meas}. As the obtained semi-coupling matrices are usually sparsely populated, we expect that an implementation utilizing sparse matrix data types could significantly improve the performance of the implementation.

\begin{algorithm}
\caption{Calculate $\operatorname{WFR}$}\label{alg:WFR_iter}
\begin{algorithmic} 
\Procedure{WFR\_distance}{$a,b,u,v,\epsilon$}

\State $\Omega_{ij}\gets(u_i\cdot v_j)$ if $i,j\geq 1$ and 1 otherwise
\State $A,B\gets \text{some intitialization in }\tilde{A}^{\mathrm{o}}$, $err\gets\infty$
\State $c\gets \sum_{i,j}\sqrt{A_{ij}\cdot B_{ij}}\cdot \Omega_{ij}$
\While $err\geq \epsilon$
\State $(A,B)\gets\Call{WFR\_iteration}{a,b,\Omega,A,B}$
\State $c_{\text{prev}}\gets c$
\State $c\gets \sum_{i,j}\sqrt{A_{ij}\cdot B_{ij}}\cdot \Omega_{ij}$
\State $err\gets (c-c_{\text{prev}})/c$
\EndWhile
\State \Return $\left(\sum_i a_i + \sum_j  b_j - 2\sum_{i,j}\sqrt{A_{ij}\cdot B_{ij}}\cdot \Omega_{ij} \right)^{1/2}$
\EndProcedure

\Procedure{WFR\_iteration}{$a,b,\Omega,A,B$}
 \State $B'_{ij}\gets B_{ij}\cdot\Omega_{ij}\cdot\Omega_{ij}$
\State $R_i\gets a_i/\sum_jB'_{ij} $ 
\State $A_{ij}\gets B'_{ij}\cdot R_i $
\State $A'_{ij}\gets A_{ij}\cdot\Omega_{ij}\cdot\Omega_{ij}$
\State $C_j\gets b_j/\sum_iA'_{ij}$
\State $B_{ij}\gets A'_{ij}\cdot C_j$
\State \Return(A,B)
\EndProcedure
\end{algorithmic}
\end{algorithm}

To quantify the performance of our algorithm we perform experiments comparing our method with entropy regularized methods using a small regularization parameter. 
Therefore we construct random pairs of finitely supported measures with a fixed number of support points. We then calculate the distance using both our algorithm and the unbalanced Sinkhorn algorithm~\cite{chizat2018scaling}, where we choose the regularization parameter to be $1\times10^{-3}$ (this was the smallest value that led to a stable performance across all experiments).  Therefore we will first discuss the relation between the variables of these two algorithms. Due to \cite{chizat2018unbalanced}, we have 
\begin{equation}
    \operatorname{WFR}^2(\mu,\nu)=\inf_{\gamma\in\mathcal{M}(M^2)} G(\gamma)
\end{equation}
where \begin{equation}G(\gamma) = \operatorname{KL}((\operatorname{Proj}_0)_\#\gamma,\mu)+\operatorname{KL}((\operatorname{Proj}_1)_\#\gamma,\nu)-\int_{M^2} \log(\cos^2(d(x,y)\wedge(\pi/2))) d\gamma(x,y).
\end{equation}
Forthcoming results of by Gallouët, Ghezzi and Vialard~\cite{gallouet2021regularity}, show that given a semi-coupling $(\gamma_1,\gamma_2)$ one can produce an optimal transport plan, 
\begin{equation}
    (\gamma_1,\gamma_2)\mapsto \gamma=\sqrt{\gamma_1\gamma_2}e^{-c/2} \text{ where }c=-\log(\cos^2(d(x,y)\wedge(\pi/2)))
\end{equation} such that 
\begin{equation}
    G(\gamma)=\int_{M^2}\left|\sqrt{\frac{\gamma_1}{\gamma}(u,v)}u-\sqrt{\frac{\gamma_2}{\gamma}(u,v)}v\right|^2 d\gamma(u,v)
\end{equation} for $\gamma$ such that $\gamma_1,\gamma_2\ll\gamma$. Therefore, we can compare the transport plans produced by both methods. The Sinkhorn type algorithm proposed in \cite{chizat2018scaling} then solves a regularized optimization problem given by \begin{equation}
    \inf\limits_{\gamma\in\mathcal{M}(M^2)}G(\gamma)+ \lambda\operatorname{Ent}(\gamma)
\end{equation} where $\lambda$ is the regularization parameter and $\operatorname{Ent}(\cdot)$ is an entropic regularization term. To obtain a fair comparison of the solutions obtained with our algorithm and the solutions obtained with the Sinkhorn algorithm, we disregarded the final entropy of the Sinkhorn solution and only compared the corresponding  transport costs. The distances that resulted from our algorithm were consistently smaller (and consequently more precise) across all experiments, which can be seen in Table~\ref{fig:ex_meas}, where we report the mean errors and variances of the Sinkhorn algorithm as compared to the obtained distance using our algorithm. For each number of support points, the relative errors were calculated by repeating the experiments 100 times. As one can see in this table our method produces significantly more accurate distances and without having to choose an entropic regularization parameter.

We also report the corresponding mean computation times for these experiments using the Sinkhorn algorithm. As the implementation of~\cite{flamary2021pot} does not utilize the GPU, we ported their implementation to PyTorch to be able to have a fair comparison of the two methods.  
Both algorithms are run on an Intel 3.2 GHz CPU with a Gigabyte GeForce GTX 2070 1620 MHz GPU. We used a maximum of 2000 iterations, but we usually observed a much faster convergence. While the GPU Sinkhorn implementation seems to scale better for larger numbers of support points, we believe that the above mentioned adaptions would lead to a similar complexity for Algorithm~\ref{alg:WFR_iter}.
In addition, as one can see in Figure \ref{fig:compare_sinkhorn}, the Sinkhorn algorithm has a significantly faster convergence and will thus lead to a faster computation time. We want to emphasize, however, that our algorithm solves the exact problem, while the Sinkhorn algorithm only tackles a regularized problem. This is also mirrored by the fact that in all our experiments the solutions obtained with our algorithm have a lower distance as compared to corresponding Sinkhorn solutions, cf.~Table~\ref{fig:ex_meas}.

\begin{table}[htbp]
\centering
\begin{tabular}{|c||c|c|c|c|c|c|c|}
\hline
Support&\multicolumn{2}{c|}{ WFR Distance}&\multicolumn{2}{c|}{Sinkhorn Error}&\multicolumn{3}{c|}{Timing in seconds}\\
\cline{2-8}
%\\
points &Alg.~\ref{alg:WFR_iter}&Sink.&Mean&Variance&Alg.~\ref{alg:WFR_iter}&Sink. (GPU)&Sink. \cite{flamary2021pot}\\\hline\hline
128&\textbf{24.045}&25.040&3.883\%&1.835\%&0.100&1.348&0.101\\\hline
256&\textbf{25.824}&26.951&4.445\%&2.909\%&0.120&1.413&0.135\\\hline
512&\textbf{27.526}&29.532&7.303\%&3.071\%&0.179&1.470&0.219\\\hline
1024&\textbf{29.724}&32.302&8.673\%&3.490\%&0.398&1.523&0.509\\\hline
2048&\textbf{31.728}&34.533&8.929\%&3.976\%&1.372&1.901&4.506\\\hline
4096&\textbf{35.061}&38.027&8.382\%&3.699\%&5.569&3.152&20.323\\\hline
8192&\textbf{38.385}&40.426&5.250\%&3.154\%&23.472&8.732&81.905\\\hline
\end{tabular}
\caption{Comparison of the Sinkhorn algorithm of~\cite{chizat2018scaling} and our Algorithm. For each number of support points theses results were obtained by calculating the WFR distance between 100 pairs of randomly chosen measures.}\label{fig:ex_meas}
\end{table}

\begin{figure}[htbp]
    \centering
    \includegraphics[height=0.29\textwidth]{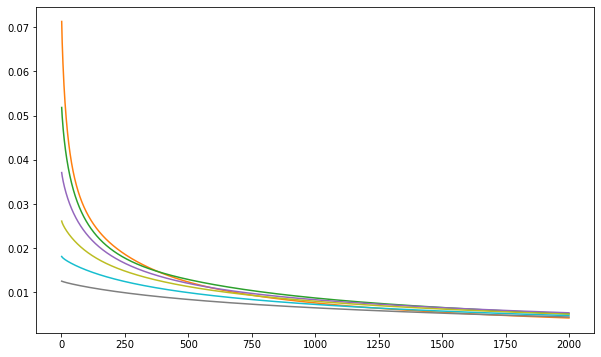}
    \includegraphics[height=0.29\textwidth]{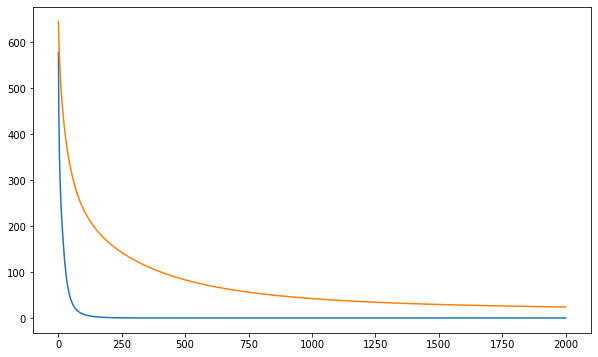}
    \caption{\textbf{Left:} Plot of $||(A,B)-(A^*,B^*)||_{L^2}$ for 2000 iterations on pairs of measures with $n$ supports:  256 (Orange), 512 (Green), 1024 (Purple), 2048 (Olive), 4096 (Light Blue), 8192 (Grey). \textbf{Right:} Plot of $||\gamma-\gamma^*||_2$ for 2000 iterations of our algorithm (Orange) and the Sinkhorn algorithm (Blue). The optimal semi-couplings, $(A^*,B^*)$, and optimal plans $\gamma^*$ are computed with 10,000 iterations of the respective algorithms.}
    \label{fig:compare_sinkhorn}
\end{figure}

\section{The SRNF shape metric as an unbalanced transport problem} 
In this section, we will present the main result of our article: the interpretation of the SRNF shape distance as an unbalanced OMT problem. Our result will then allow us to compute the SRNF distance using the algorithm introduced in Section~\ref{algorithm} and we will use this to present several numerical examples at the end of the section.

\subsection{Shape spaces of surfaces}
In all of this section let $M$ be a smooth, connected, compact, oriented Riemannian 2-dimensional manifold with or without boundary. In addition to the smooth structure on $M$, we will be also interested in a piecewise linear structure on it. By ~\cite{whitehead1940c1} any such $M$ indeed admits a Whitehead PL structure. That is, there exists a polyhedral surface $K$ in $\R^3$ and a homeomorphism called a triangulation $\sigma:K\to M$ such that $\sigma$ is differentiable with injective differential on each face of $K$. 

We denote the space of all Lipschitz immersions of $M$ into $\reals^3$ by $\Imm_{\operatorname{Lip}}(M,\reals^3)$, i.e.,
\begin{align}
\Imm_{\operatorname{Lip}}(M,\reals^3)=\{f\in W^{1,\infty}(M,\reals^3): Tf \text{ is inj. a.e.}\}\;.    
\end{align}
The reason for considering immersions of the Lipschitz class is that this space has two important subsets: the space of smooth immersions from $M$ to $\reals^3$ and the space of functions $f\in \Imm_{\operatorname{Lip}}(M,\reals^3)$ that are PL with respect to the given PL structure on $M$.

As we are interested in unparametrized surfaces, we have to factor out the action of the group of diffeomorphisms. In the context of Lipschitz immersions the natural group of reparametrizations for us to consider is the group of all 
orientation preserving, bi-Lipschitz diffeomorphisms:
 \begin{equation}
     \DiffLip(M)=\{ \gamma\in W^{1,\infty}(M,M):\; \gamma^{-1}\in W^{1,\infty}(M,M), |D\gamma|>0 \text{ a.e.}\},
     \end{equation} 
where  $|D\gamma|$ denotes the Jacobian determinant of $\gamma$, which is well-defined as $D\gamma\in L^{\infty}$. For reasons, that will become clear later in Section~\ref{subsec:SRNF}, we will also consider two subsets of $\DiffLip(M)$, namely the group of smooth, orientation preserving diffeomorphisms 
\begin{align}
  \DiffSmooth(M)=\{\gamma\in C^{\infty}(M,M):\; \gamma^{-1}\in C^{\infty}(M,M), |D\gamma|>0\},
\end{align}
and the set of $PL$-homeomorphisms. To define the latter, we recall that a PL-homeomorphism on $M$ is a homeomorphism $\gamma: M \to M$, such that 
there exists some subdivision of $K$ such that $\sigma^{-1} \circ \gamma \circ \sigma$ is linear on each face. We denote the corresponding space of all orientation preserving homeomorphisms by
 \begin{equation}
     \DiffPL(M)=\{\gamma\in \operatorname{Hom}_{PL}(M,M):\, |D\gamma|>0 \text{ a.e.} \}.
 \end{equation}
Note that any of these reparametrization groups act by composition from the right on $\ImmLip(M,\reals^3)$. In addition to the action of these reparametrization groups, we also want to identify surfaces that only differ by a translation. 
This leads us to consider the following three quotient spaces:
\begin{align}
&\mathcal S_{\operatorname{\infty}}:=\ImmLip(M,\mathbb R^3)/\DiffSmooth(M)/\operatorname{trans}  \\ 
&\mathcal S_{\operatorname{Lip}}:=\ImmLip(M,\mathbb R^3)/\DiffLip(M)/\operatorname{trans} \\
&\mathcal S_{\operatorname{PL}}:=\ImmLip(M,\mathbb R^3)/\DiffPL(M)/\operatorname{trans} ,
\end{align}
which will play a central role in the remainder of the article. Note that we always consider immersions of Lipschitz class and only vary the regularity of the group acting on this space.

\subsection{The SRNF framework}\label{subsec:SRNF}  
The square root normal field (SRNF) map was first introduced by Jermyn et al. 
in ~\cite{jermyn2012elastic} for the space of smooth immersions. As we will see in the following this mapping directly extends to all immersions of the Lipschitz class. 

For any given $f\in\ImmLip(M,\reals^3)$, the orientation on $M$ allows us to consider the unit normal vector field $n_f:M\to\reals^3$, which is well-defined as a function in $L^{\infty}(M,\reals^3)$. Furthermore, let $\{v,w\}$ be an orthonormal basis of $T_xM$. Then for any $f\in\ImmLip(M,\reals^3)$ we can also define the area multiplication factor at $x\in M$ via $a_f(x)=|df_x(v)\times df_x(w)|$. 

The SRNF map is then given by  
\begin{align}
\Phi:\ImmLip(M,\reals^3)/\operatorname{translations}&\to L^2(M,\reals^3)\\
f&\mapsto q_f,
\end{align}
where
\begin{equation}q_f(x)=\sqrt{a_f(x)}\;n_f(x).\end{equation}
We can now use the SRNF to define a pseudometric on $\ImmLip(M,\reals^3)/\operatorname{translations}$ by \begin{equation}\label{distfuncdef}
d_{\Imm}(f_1,f_2)=\|\Phi(f_1)-\Phi(f_2)\|_{L^2}.
\end{equation} The function $d_{\Imm}$ is only a pseudo-metric due to the non-injectivity of $\Phi$. Examples of this degeneracy have been discussed extensively in the recent article ~\cite{klassen2020closed}.

Next we consider a right-action of $\DiffLip(M)$ on $L^2(M,\reals^3)$ 
that is compatible with the mapping $\Phi$.
Therefore we let
\begin{align}
 (g*\gamma)(x)=\sqrt{a_\gamma(x)}g(\gamma(x)),
\end{align} where $a_\gamma(x)$, the area multiplication factor of $\gamma$ at $x$, is defined by $a_\gamma(x)=\sqrt{|D_\gamma(x)|}$. It is easy to check that \begin{align}
   \Phi(f)*\gamma=\Phi(f\circ\gamma) 
\end{align} 
and that the action of $\DiffLip(M)$ on $L^2(M,\reals^3)$ is by linear isometries, if we put the usual $L^2$ inner product on $L^2(M,\reals^3)$. Thus it follows that the SRNF pseudometric on $\ImmLip(M,\reals^3)$ is invariant with respect to this action and thus it descends to a pseudometric on the quotient space $\mathcal S_{\operatorname{Lip}}$, which is given by
\begin{equation}
   d_{\mathcal S_{\operatorname{Lip}}}([f_1],[f_2])=\inf\limits_{\gamma\in\DiffLip(M)}d(f_1,f_2\circ\gamma),\qquad [f_1],[f_2]\in \mathcal{ S}_{\operatorname{Lip}}(M)
\end{equation}
Since $\DiffSmooth(M)$ and $\DiffPL(M)$ are both subsets of $\DiffLip$ the invariance properties continue to hold for the actions of the smaller groups and we can also consider the corresponding distance functions
\begin{align}
d_{\mathcal S_{\infty}}([f_1],[f_2])=\inf\limits_{\gamma\in\DiffSmooth(M)}d(f_1,f_2\circ\gamma),\qquad [f_1],[f_2]\in \mathcal{S}_{\infty}(M)\\
   d_{\mathcal S_{\operatorname{PL}}}([f_1],[f_2])=\inf\limits_{\gamma\in\DiffPL(M)}d(f_1,f_2\circ\gamma),\qquad [f_1],[f_2]\in \mathcal{S}_{\operatorname{PL}}(M)
\end{align}
 The remainder of this section will be devoted to showing that the SRNF distance for each of these three group actions is equivelent. More precisely, we aim to prove the following theorem:
 \begin{theorem}\label{thm:PLequiv}
 Let $f_1,f_2\in \ImmLip(M,\reals^3)$. Then 
\begin{equation}
 d_{\mathcal S_{\operatorname{Lip}}}([f_1],[f_2])= 
 d_{\mathcal S_{\infty}}([f_1],[f_2])=
 d_{\mathcal S_{\operatorname{PL}}}([f_1],[f_2]).
\end{equation}
\end{theorem}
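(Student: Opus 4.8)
The plan is to prove the two equalities by a sandwiching argument based on the inclusions of groups. Since $\DiffSmooth(M)\subset\DiffLip(M)$ and $\DiffPL(M)\subset\DiffLip(M)$, and since in each case the infimum defining the quotient distance is taken over a larger set, we immediately get
\begin{equation}
d_{\mathcal S_{\operatorname{Lip}}}([f_1],[f_2])\leq d_{\mathcal S_{\infty}}([f_1],[f_2])
\quad\text{and}\quad
d_{\mathcal S_{\operatorname{Lip}}}([f_1],[f_2])\leq d_{\mathcal S_{\operatorname{PL}}}([f_1],[f_2]).
\end{equation}
So the whole content of the theorem is the reverse inequalities: every bi-Lipschitz reparametrization can be approximated, in the only sense that matters for the SRNF functional, by a smooth one and by a PL one. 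Concretely, it suffices to show that for any $f_1,f_2\in\ImmLip(M,\reals^3)$ and any $\varepsilon>0$ there exist $\gamma_\infty\in\DiffSmooth(M)$ and $\gamma_{\operatorname{PL}}\in\DiffPL(M)$ with $d(f_1,f_2\circ\gamma_\infty)\leq d_{\mathcal S_{\operatorname{Lip}}}([f_1],[f_2])+\varepsilon$ and similarly for $\gamma_{\operatorname{PL}}$.

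The key analytic fact to establish is a continuity/density statement: the map $\gamma\mapsto \Phi(f_2)*\gamma = \Phi(f_2\circ\gamma)$, viewed as a map into $L^2(M,\reals^3)$, is ``stable'' under approximation of $\gamma$ in a suitable topology, and $\DiffSmooth(M)$ (resp.\ $\DiffPL(M)$) is dense in $\DiffLip(M)$ in that topology. The natural quantity to control is $a_\gamma = \sqrt{|D\gamma|}$ together with the pointwise values $\gamma(x)$: since $\Phi(f_2\circ\gamma)(x) = \sqrt{a_{f_2}(\gamma(x))\,|D\gamma(x)|}\;n_{f_2}(\gamma(x))$, what one needs is that $|D\gamma_k|\to|D\gamma|$ in $L^1$ and $\gamma_k\to\gamma$ uniformly (or in measure), with the Jacobians uniformly bounded above and below away from $0$; then $n_{f_2}\circ\gamma_k$ and $a_{f_2}\circ\gamma_k$ converge in $L^2$ by dominated convergence (using $n_{f_2},a_{f_2}\in L^\infty$ and a change-of-variables bound), and hence $\Phi(f_2\circ\gamma_k)\to\Phi(f_2\circ\gamma)$ in $L^2$, giving $d(f_1,f_2\circ\gamma_k)\to d(f_1,f_2\circ\gamma)$. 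Given such a lemma, one starts from a near-optimal $\gamma\in\DiffLip(M)$ for $d_{\mathcal S_{\operatorname{Lip}}}$, approximates it by smooth (resp.\ PL) diffeomorphisms, and reads off the reverse inequalities.

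I expect the main obstacle to be the approximation statement itself at the level of diffeomorphism \emph{groups}, not just maps: one must approximate a bi-Lipschitz $\gamma$ by \emph{smooth} orientation-preserving diffeomorphisms of $M$ (and by PL-homeomorphisms for the fixed Whitehead PL structure) while keeping the Jacobian controlled pointwise and convergent in $L^1$ — mollification does not a priori stay injective or map $M$ to $M$, so one needs a genuine smoothing theorem for bi-Lipschitz homeomorphisms of a compact manifold (in the spirit of Whitehead's and Munkres' smoothing results, or a partition-of-unity plus local straightening argument), plus care that the mollified maps remain diffeomorphisms with $|D\gamma_k|$ uniformly bounded away from $0$ and $\infty$. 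For the PL case one invokes the given triangulation $\sigma:K\to M$ and approximates $\sigma^{-1}\circ\gamma\circ\sigma$ by simplicial maps on finer and finer subdivisions, again monitoring Jacobians. A cleaner alternative that sidesteps smoothing \emph{of diffeomorphisms}: since the SRNF functional only sees $\Phi(f_2\circ\gamma)\in L^2$, it may be enough to approximate the \emph{pushforward data} directly — but one still has to exhibit honest elements of the smaller groups realizing the approximation, so some version of the smoothing theorem seems unavoidable; establishing it (or citing it precisely) is the crux, and the rest is the routine dominated-convergence bookkeeping sketched above.
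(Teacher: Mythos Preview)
Your overall strategy---the trivial inequality from the group inclusions, followed by approximating a near-optimal $\gamma\in\DiffLip(M)$ by elements of the smaller group so that $\Phi(f_2)*\gamma_k\to\Phi(f_2)*\gamma$ in $L^2$---is exactly the paper's. The difference lies in how the continuity/approximation step is set up. You propose to control $\gamma_k\to\gamma$ uniformly, $|D\gamma_k|\to|D\gamma|$ in $L^1$, \emph{and} keep two-sided uniform Jacobian bounds, then run dominated convergence on $n_{f_2}\circ\gamma_k$ and $a_{f_2}\circ\gamma_k$; this is sound (the uniform Jacobian bound is what lets you push the $L^2$-approximation of $n_{f_2}$ by continuous functions through the composition), but as you correctly flag, producing approximants that are honest diffeomorphisms with such uniform Jacobian control is the hard part. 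The paper instead equips $\DiffLip(M)$ with only the $W^{1,2}$ topology and proves joint continuity of $(q,\gamma)\mapsto q*\gamma$ there (Lemma~\ref{lem:cont_act}). The trick is that the action is by $L^2$-isometries, so one may reduce to piecewise constant $q$; for such $q$ the estimate follows directly from $\|\sqrt{a_{\gamma_n}}-\sqrt{a_\gamma}\|_{L^2}\to 0$ (a consequence of $a_\gamma\mapsto a_\gamma$ being continuous $W^{1,2}\to L^1$ in dimension two) together with the shrinking of the mismatch sets $\gamma_n^{-1}(M_j)\cap\gamma^{-1}(M_i)$. No uniform lower Jacobian bound on the approximants is ever needed. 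This buys a much lighter density step: one only has to cite that smooth (resp.\ PL) maps into $M$ are $W^{1,2}$-dense in $W^{1,2}(M,M)$ for a $2$-manifold $M$ (Bethuel--Zheng~\cite{bethuel1988density}; Van~Schaftingen~\cite{van2014approximation}), rather than a full smoothing theorem for bi-Lipschitz homeomorphisms with Jacobian control. Your route would work, but the paper's choice of the weaker $W^{1,2}$ topology combined with the isometric-action/piecewise-constant reduction is precisely what collapses the analytic core to a few lines.
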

 \begin{remark}
 Note that this result implies in particular that for smooth immersions $f_1,f_2$ the SRNF metric as defined in this section is equal to the SRNF metric considered in~\cite{jermyn2012elastic}.
 \end{remark}
 The main ingredient for proving this theorem will be the following lemma  concerning the continuity of the action of $\DiffLip(M)$ on $L^2(M,\mathbb{R}^3)$. Therefore we first  note that $\DiffLip(M)\subseteq W^{1,\infty}(M,M)\subseteq W^{1,2}(M,M)$  and thus we can equip $\DiffLip(M)$ with the $\|\cdot\|_{W^{1,2}}$ norm. 
 \begin{lemma}\label{lem:cont_act}
 The map 
 \begin{align}
  \left(L^2(M,\mathbb{R}^3),\|\cdot\|_{L^2}\right)\times\left(\DiffLip(M),\|\cdot\|_{W^{1,2}}\right)&\to  \left( L^2(M,\mathbb{R}^3),\|\cdot\|_{L^2}\right)  \\
  (q,\gamma) &\mapsto q*\gamma
 \end{align} 
 is jointly continuous.
 \end{lemma}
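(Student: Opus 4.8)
The plan is to prove joint continuity by splitting the difference $q_1 * \gamma_1 - q_2 * \gamma_2$ into two pieces and handling each separately, using a density argument to reduce the hard piece to continuous $q$. Concretely, write
\begin{equation}
q_1*\gamma_1 - q_2*\gamma_2 = (q_1 - q_2)*\gamma_1 + (q_2*\gamma_1 - q_2*\gamma_2).
\end{equation}
For the first term, since the action of $\DiffLip(M)$ on $L^2$ is by linear isometries (as recalled in Section~\ref{subsec:SRNF}), we have $\|(q_1-q_2)*\gamma_1\|_{L^2} = \|q_1 - q_2\|_{L^2}$, so this term is controlled directly by closeness of $q_1$ to $q_2$ in $L^2$, uniformly in $\gamma_1$. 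The whole difficulty is therefore concentrated in the second term: showing that $\gamma \mapsto q*\gamma$ is continuous at a fixed $\gamma_2$, in the $W^{1,2}$ topology on the source and the $L^2$ topology on the target.

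For the second term I would first treat the case where $q$ is continuous (indeed, I would reduce to $q \in C^0(M,\reals^3)$, or even Lipschitz, which is dense in $L^2(M,\reals^3)$). Given $\epsilon > 0$, pick a continuous $\tilde q$ with $\|q - \tilde q\|_{L^2} < \epsilon$; then by the isometry property $\|q*\gamma - \tilde q*\gamma\|_{L^2} < \epsilon$ for every $\gamma$, so it suffices to prove continuity of $\gamma \mapsto \tilde q * \gamma$. Here $(\tilde q * \gamma)(x) = \sqrt{|D\gamma(x)|}\,\tilde q(\gamma(x))$. If $\gamma_k \to \gamma_2$ in $W^{1,2}$, then (passing to a subsequence) $\gamma_k \to \gamma_2$ a.e. and $D\gamma_k \to D\gamma_2$ in $L^2$, hence $|D\gamma_k| \to |D\gamma_2|$ in $L^1$ and (along a further subsequence) a.e.; combined with continuity of $\tilde q$, the integrand $\sqrt{|D\gamma_k(x)|}\,\tilde q(\gamma_k(x))$ converges a.e. to $\sqrt{|D\gamma_2(x)|}\,\tilde q(\gamma_2(x))$. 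To upgrade a.e. convergence to $L^2$ convergence I would use a generalized dominated convergence / Vitali-type argument: $\|\tilde q*\gamma_k\|_{L^2}^2 = \int_M |D\gamma_k|\,|\tilde q(\gamma_k(x))|^2\,dx \le \|\tilde q\|_\infty^2 \int_M |D\gamma_k|\,dx = \|\tilde q\|_\infty^2 \cdot \operatorname{vol}(M)$ by the change-of-variables inequality (the Jacobian of $\gamma_k$ integrates to at most $\operatorname{vol}(M)$ since $\gamma_k$ is an orientation-preserving bi-Lipschitz map onto $M$), and more importantly $\|\tilde q*\gamma_k\|_{L^2}^2 \to \|\tilde q*\gamma_2\|_{L^2}^2$ because $\int_M |D\gamma_k|\,|\tilde q(\gamma_k)|^2 = \int_M |\tilde q|^2 \,dy + (\text{error})$ — actually by change of variables $\int_M |D\gamma_k(x)|\,|\tilde q(\gamma_k(x))|^2\,dx = \int_M |\tilde q(y)|^2\,dy$ exactly, so the $L^2$ norms are all equal to $\|\tilde q\|_{L^2}$. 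Then a.e. convergence together with equality of norms gives $L^2$ convergence (this is the standard fact that a.e. convergence plus convergence of $L^p$ norms implies $L^p$ convergence, via Fatou / Brezis–Lieb). Since every subsequence has a further subsequence converging to the same limit $\tilde q*\gamma_2$ in $L^2$, the full sequence converges, establishing continuity of $\gamma \mapsto \tilde q*\gamma$ and hence of $\gamma \mapsto q*\gamma$.

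The main obstacle I anticipate is the passage from $W^{1,2}$-convergence of $\gamma_k$ to a.e. convergence of both $\gamma_k$ and $D\gamma_k$ in a way that survives through the nonlinear composition $\tilde q \circ \gamma_k$ and the square-root factor; in particular one must be careful that the a.e.-convergent subsequence argument is organized so the final conclusion is about the full sequence, and that the change-of-variables formula $\int_M |D\gamma_k|\,(h\circ\gamma_k) = \int_M h$ is legitimately applied for bi-Lipschitz $\gamma_k$ (this is the area formula for Lipschitz maps, valid since $\gamma_k \in W^{1,\infty}$ and is a homeomorphism of $M$ onto itself). A secondary subtlety is that $q$ itself need not be bounded, which is why the density reduction to continuous $\tilde q$ — where $\|\tilde q\|_\infty < \infty$ — is essential before invoking any domination; the isometry property of the action is exactly what makes this reduction clean, since it transfers the $L^2$-approximation error uniformly across all reparametrizations.
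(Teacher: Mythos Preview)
Your proof is correct and takes a genuinely different route from the paper's. Both arguments share the same outer structure: split off $(q_1-q_2)*\gamma_1$ using the isometry property, then reduce continuity in $\gamma$ to a dense subclass of $q$'s via that same isometry property. The divergence is in how the core case is handled.

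The paper approximates $q$ by \emph{piecewise constant} functions $q=\sum_j u_j\chi_{M_j}$, then decomposes $M$ into the pieces $\gamma^{-1}(M_i)\cap\gamma_n^{-1}(M_j)$ and estimates the integral on each piece directly, using that $\sqrt{a_{\gamma_n}}\to\sqrt{a_\gamma}$ in $L^2$ (from $L^1$-convergence of Jacobians via Sobolev multiplication) and that the off-diagonal pieces have vanishing measure. Your approach instead approximates by \emph{continuous} $\tilde q$ and exploits the isometry property a second, sharper time: by the area formula for bi-Lipschitz maps, $\|\tilde q*\gamma_k\|_{L^2}=\|\tilde q\|_{L^2}$ exactly for every $k$, so a.e.\ convergence of the integrands (obtained along subsequences from $W^{1,2}$-convergence) plus equality of norms yields $L^2$-convergence via the standard Brezis--Lieb/generalized-DCT fact, and the subsequence principle finishes. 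Your argument is shorter and avoids the set-by-set bookkeeping, at the price of invoking the ``a.e.\ convergence + convergence of norms $\Rightarrow$ $L^p$-convergence'' lemma; the paper's argument is more hands-on but entirely elementary. One small clean-up: you do not actually need the intermediate claim that $|D\gamma_k|\to|D\gamma_2|$ in $L^1$ --- a.e.\ convergence along a subsequence (immediate from $L^2$-convergence of $D\gamma_k$) is all your norm-equality argument uses.
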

 Since the $W^{1,\infty}$ topology dominates the $W^{1,2}$-topology, the lemma would also hold when equipping $\DiffLip(M)$ with the Lipschitz topology. 
 The reason for equipping it instead with the $W^{1,2}$-topology is that the subgroups of smooth and PL diffeomorphisms are dense with respect to this topology, but not with respect to the $W^{1,\infty}$-topology. The density of these groups will be a crucial ingredient for our proof of Theorem~\ref{thm:PLequiv}.

 \begin{proof} 
 The proof of this result is inspired by~\cite[Proposition 7]{bruveris2016optimal}, where a related result for one-dimensional domain space $M$ is shown.  \newline
 \textbf{Step 1 (Piecewise constant $q$)}
 Let $q\in L^2(M,\mathbb{R}^3)$ be piecewise constant, i.e., there exists a disjoint family of sets $M_j$ such that $q=\sum_{j=1}^{N}u_j\chi(M_j)$. Given a sequence $\gamma_n\to \gamma$ in $\left(\DiffLip(M),\|\cdot\|_{W^{1,2}}\right)$, we need to show $q*\gamma_n\to q*\gamma$ in $L^2(M,\mathbb{R}^3)$.  Since $M$ is a 2-manifold, the map
\begin{align}
    W^{1,2}(M,M)\to L^1(M,M)\text{ defined via }
    \gamma\mapsto a_{\gamma}
\end{align}
is continuous by the Sobolev multiplication theorem. Thus, 
\begin{equation}
    \int_M|a_{\gamma}-a_{\gamma_n}|dx\to 0. 
\end{equation}
Further this implies
\begin{equation}
    \int_M|a_{\gamma}-a_{\gamma_n}|dx= \int_M\left(\sqrt{|a_{\gamma}-a_{\gamma_n}|}\right)^2dx\geq\int_M|\sqrt{a_{\gamma}}-\sqrt{a_{\gamma_n}}|^2dx
\end{equation}
and therefore we also have 
\begin{equation}
    \int\limits_{M}\left|\sqrt{a_{\gamma}}-\sqrt{a_{\gamma_n}}\right|^2dx\to 0.
\end{equation}
 Next we define the sets $M_{i,n}^j=\gamma_n(M_j)\cap \gamma(M_i)$. Using this we can write the integral via a double sum as:
 \begin{align}
    \int_M|(q\circ\gamma)\sqrt{a_{\gamma}}-(q\circ\gamma_n)\sqrt{a_{\gamma_n}}|^2 dx = \sum_{i=1}^{N}\sum_{j=1}^{N}\int_{M_{i,n}^j}|u_j\sqrt{a_{\gamma}}-u_i\sqrt{a_{\gamma_n}}|^2dx
 \end{align}
 Note that $\gamma_n \to \gamma$ in $W^{1,\infty}$ and thus for $i\neq j$, $\mu(M_{i,n}^j)\to 0$. Meanwhile 
 \begin{align}
    \int_{M_{i,n}^j}|u_j\sqrt{a_{\gamma}}-u_i\sqrt{a_{\gamma_n}}|^2dx&\leq  \int_{M_{i,n}^j}|u_j\sqrt{a_{\gamma}}-u_i\sqrt{a_{\gamma}}|^2dx+\|q\|^2_\infty\int_{M_{i,n}^j}|\sqrt{a_{\gamma}}-\sqrt{a_{\gamma_n}}|^2dx\\
    &=|u_j-u_i|^2\int_{M_{i,n}^j}|a_{\gamma}|dx+\|q\|^2_\infty\int_{M_{i,n}^j}|\sqrt{a_{\gamma}}-\sqrt{a_{\gamma_n}}|^2dx.
 \end{align}
 Recall, $\gamma\in W^{1,\infty}(M,M)$ thus $|a_{\gamma}|$ is bounded. So \begin{equation}
     |u_j-u_i|^2\int\limits_{M_{i,n}^j}|a_{\gamma}|dx\to 0
 \end{equation}
 as $\mu(M_{i,n}^j) \to 0$.
 Additionally,
 \begin{equation}
     \int\limits_{M}\left|\sqrt{a_{\gamma}}-\sqrt{a_{\gamma_n}}\right|^2dx\to 0\text{ therefore when }i\neq j, \,\int_{M_{i,n}^j}|u_j\sqrt{a_{\gamma}}-u_i\sqrt{a_{\gamma_n}}|^2dx\to 0.
 \end{equation}
In the case where $i=j$ we have 
 \begin{equation}
     \int_{M_{i,n}^i}|u_i\sqrt{a_{\gamma}}-u_i\sqrt{a_{\gamma_n}}|^2=\int_{M_{i,n}^i}|u_i|^2\cdot|\sqrt{a_{\gamma}}-\sqrt{a_{\gamma_n}}|^2dx\leq \|q\|^2_\infty \int_{M_{i,n}^i}|\sqrt{a_{\gamma}}-\sqrt{a_{\gamma_n}}|^2dx.
 \end{equation}
 Thus, 
 \begin{equation}
     \int_M|(q\circ\gamma)\sqrt{a_{\gamma}}-(q\circ\gamma_n)\sqrt{a_{\gamma_n}}|^2 dx \to 0.
 \end{equation} 
\textbf{Step 2. (general $q$)} 
Let now $q\in L^2(M,\mathbb{R}^3)$, $\gamma_n\to \gamma$ in $\DiffLip(M)$, and $\epsilon>0$. Pick $q'$ to be piecewise constant such that $\|q-q'\|_{L^2}<\epsilon/3$. Then using the fact that $\DiffLip(M)$ acts by isometries and the triangle inequality we have
\begin{align}
    \|q*\gamma-q*\gamma_n\|_{L^2}&\leq \|q*\gamma-q'*\gamma\|_{L^2}+ \|q'*\gamma-q'*\gamma_n\|_{L^2}+ \|q'*\gamma_n-q*\gamma_n\|_{L^2}\\&\leq
    \|q-q'\|_{L^2}+ \|q'*\gamma-q'*\gamma_n\|_{L^2}+ \|q'-q\|_{L^2}\\&
    \leq \epsilon/3+\|q'*\gamma-q'*\gamma_n\|_{L^2}+\epsilon/3.
\end{align}
 From here we can conclude the result using Step 1.
 
\noindent\textbf{Step 3. (joint continuity)} 
Let now $q_n\to q$ in $L^2(M,\mathbb{R}^3)$, $\gamma_n\to \gamma$ in $\DiffLip(M)$, and $\epsilon>0$. Using the fact that $\DiffLip(M)$ acts by isometries and the triangle inequality we have
\begin{align}
    \|q*\gamma-q_n*\gamma_n\|_{L^2}&\leq \|q*\gamma-q*\gamma_n\|_{L^2}+ \|q*\gamma_n-q_n*\gamma_n\|_{L^2}\\&=
    \|q*\gamma-q*\gamma_n\|_{L^2}+ \|q-q_n\|_{L^2}\\
\end{align}
 From here we can conclude the result using Step 2.
\end{proof}
We are now able to prove Theorem~\ref{thm:PLequiv}:
\begin{proof}[Proof of Theorem~\ref{thm:PLequiv}]
Given $[f_1],[f_2]\in \mathcal S_{\operatorname{Lip}}$, choose parametrized representations  $f_1,f_2\in\ImmLip(M,\reals^3)$.

\noindent\textbf{Part 1.} $(\boldsymbol{d_{\mathcal S_{\operatorname{Lip}}}([f_1],[f_2])= 
 d_{\mathcal S_{\infty}}([f_1],[f_2]))}$ Note that $\DiffSmooth(M)$ is a subset of $\DiffLip(M)$. Thus,  \begin{equation}
    \inf\limits_{\gamma\in\DiffLip(M)}\|\Phi(f_1)-\Phi(f_2\circ\gamma)\|_{L^2}\leq\inf\limits_{\gamma\in\DiffSmooth(M)}\|\Phi(f_1)-\Phi(f_2\circ\gamma)\|_{L^2}.
\end{equation}
To show the opposite inequality, take an arbitrary $g\in\DiffLip(M)$ and let $\epsilon>0$. By Lemma~\ref{lem:cont_act}, there exists some $\delta>0$ such that $\|g-\gamma\|_{W^{1,2}}<\delta$ implies $\|\Phi(f_2)*g-\Phi(f_2)*\gamma\|<\epsilon$. Since $M$ is a 2-manifold, $C^\infty(M,M)$ maps are dense in the Sobolev functions $W^{1,2}(M,M)$ ~\cite{bethuel1988density}. Since $\DiffLip(M)\subseteq W^{1,2}(M,M)$, there exists $\gamma\in \DiffSmooth(M)$ such that $\|g-\gamma\|_{W^{1,2}}<\delta$. Thus, 
\begin{align}
    \|\Phi(f_1)-\Phi(f_2)*\gamma\|_{W^{1,2}}&\leq \|\Phi(f_1)-\Phi(f_2)*g\|_{W^{1,2}}+\|\Phi(f_2)*g-\Phi(f_2)*\gamma\|_{W^{1,2}}\\&<\|\Phi(f_1)-\Phi(f_2)*g\|_{W^{1,2}}+\epsilon
\end{align}
Therefore, \begin{equation}
    \inf\limits_{\gamma\in\DiffLip(M)}\|\Phi(f_1)-\Phi(f_2\circ\gamma)\|_{L^2}=\inf\limits_{\gamma\in\DiffSmooth(M)}\|\Phi(f_1)-\Phi(f_2\circ\gamma)\|_{L^2}.
\end{equation}
\textbf{Part 2.} $(\boldsymbol{d_{\mathcal S_{\operatorname{Lip}}}([f_1],[f_2])= 
 d_{\mathcal S_{PL}}([f_1],[f_2]))}$ 
 This follows exactly as in Part 1, using that $\DiffPL(M)$ is a dense subset of $\DiffLip(M)$. To prove the density we use the fact that $\operatorname{Hom}_{PL}(M,M)$ maps are dense in the Sobolev functions $W^{1,2}(M,M)$ as $\operatorname{dim}(M)=2$, cf. ~\cite{van2014approximation}. 
\end{proof}

\subsection{SRNF shape metric as an unbalanced optimal transport problem.}
First we define a map from  $L^2(M,\mathbb{R}^3)$ to the the space of positive finite Borel measures on $S^2$, and then show that computing the shape distance between two surfaces is equivalent to computing the Wasserstein-Fisher-Rao distance between the corresponding measures. For $q\in L^2(M,\mathbb{R}^3)$ and $U\subseteq S^2$ open, define 
\begin{equation}
    q^*U=\{x\in M|q(x)\neq 0 \text{ and } q(x)/|q(x)| \in U \}.
\end{equation}
Then we can define
\begin{align}
     &\psi:L^2(M,\mathbb{R}^3)\to\mathcal{M}(S^2)\text{ via }\psi(q)=\mu_q\\ 
     &\text{where for } U\subseteq S^2, \mu_q(U)=\int_{q^*U}q(x)\cdot q(x)dx.
\end{align}
The main goal of this section is to show that the shape pseudo-distance in the SRNF framework can be written as a pullback of the Wasserstein-Fisher-Rao distance via the map $\psi$. We will show this result only for the dense subset of PL surfaces. We expect that a careful analysis of the continuity of the SRNF map would allow one to obtain this result also for general Lipschitz immersions, which we plan to study in future work. 

\begin{theorem}\label{thm:maintheorem}
Given two PL surfaces $S_1$ and $S_2$ parameterized by $f_1$ and $f_2$ the SRNF shape distance can be computed as an unbalanced transport problem. More precisely, we have  
\begin{equation}
    d_{\mathcal S_{\operatorname{Lip}}}([f_1],[f_2])=\operatorname{WFR}(\psi\circ\phi(f_1),\psi\circ\phi(f_2)).
    \end{equation}
\end{theorem}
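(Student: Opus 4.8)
The plan is to reduce the SRNF shape distance to a computation with area measures on $S^2$ and then recognize the resulting optimization as exactly the Kantorovich problem for $\operatorname{WFR}$ analyzed in Section~\ref{sec:finmeasures}. First I would unwind the definitions: for a PL surface $S_i$ parametrized by $f_i$, the SRNF $q_{f_i}=\Phi(f_i)$ is piecewise constant, taking the value $\sqrt{a_{f_i}}\,n_{f_i}$ on each (open) face; hence $\psi(q_{f_i})$ is a finitely supported measure $\mu_i=\sum_k \operatorname{area}_k\,\delta_{N_k}$, where $N_k$ is the oriented unit normal of the $k$-th face and $\operatorname{area}_k$ is its (Euclidean) area, with faces of equal normal direction merged. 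So the right-hand side is $\operatorname{WFR}$ between two explicit finitely supported measures on $S^2$, which by the Corollary in Section~\ref{sec:finmeasures} equals $\inf_{(A,B)\in\mathcal A(\mu_1,\mu_2)}\sum_{i,j}|\sqrt{A_{ij}}u_i-\sqrt{B_{ij}}v_j|^2$.

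Next I would attack the left-hand side. By Theorem~\ref{thm:PLequiv} we may compute $d_{\mathcal S_{\operatorname{Lip}}}$ as $d_{\mathcal S_{\operatorname{PL}}}$, i.e.\ as $\inf_{\gamma\in\DiffPL(M)}\|\Phi(f_1)-\Phi(f_2)\ast\gamma\|_{L^2}$, possibly after further common subdivision of the triangulation so that $f_1$ and $f_2$ are simultaneously PL. The key structural observation is that $\|\Phi(f_1)-\Phi(f_2)\ast\gamma\|_{L^2}^2$ only depends, through $\gamma$, on how the mass of each face of $S_2$ is distributed among the faces of $S_1$: writing $A_{ij}$ for the amount of area of $q_{f_1}$ on face $i$ that lies over face $j$ of the domain of the $\gamma$-pulled-back $q_{f_2}$, and $B_{ij}$ for the corresponding area coming from $q_{f_2}\ast\gamma$, one checks (using the pointwise identity $|q_{f_1}-q_{f_2}\ast\gamma|^2$ integrated over the overlap region, together with $|u_i|=|v_j|=1$) that the squared $L^2$ distance equals $\sum_{i,j}|\sqrt{A_{ij}}u_i-\sqrt{B_{ij}}v_j|^2$ plus the mass that is created/destroyed, which is precisely the zeroth-row/column bookkeeping in the definition of $\mathcal A(\mu_1,\mu_2)$. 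Thus every $\gamma$ produces a discrete semi-coupling, giving the inequality $d_{\mathcal S_{\operatorname{Lip}}}([f_1],[f_2])\ge \operatorname{WFR}(\psi\Phi f_1,\psi\Phi f_2)$.

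For the reverse inequality I would show that any discrete semi-coupling $(A,B)\in\mathcal A(\mu_1,\mu_2)$ — or at least one arbitrarily close to the optimum, and by the Corollary an optimizer exists — can be (approximately) realized by a PL homeomorphism $\gamma$: subdivide the faces of $\operatorname{dom}(f_1)$ and of $\operatorname{dom}(f_2)$ so that face $i$ of $S_1$ is cut into pieces of areas $\{A_{ij}\}_j$ and face $j$ of $S_2$ into pieces of areas proportional to $\{B_{ij}\}_i$, then define $\gamma$ piece-by-piece as an affine (area-rescaling) identification of the $(i,j)$-piece of $\operatorname{dom}(f_1)$ with the $(i,j)$-piece of $\operatorname{dom}(f_2)$; the SRNF area weight $\sqrt{a_\gamma}$ absorbs exactly the ratio needed so that $q_{f_2}\ast\gamma$ has the prescribed mass $B_{ij}$ on the $(i,j)$-piece. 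Matching up the pieces consistently into a genuine homeomorphism (so that the subdivisions on the two sides are compatible and orientation is preserved) is the step that needs care; one may need a limiting argument, approximating a measure-theoretic "transport of area" by honest PL homeomorphisms and invoking Lemma~\ref{lem:cont_act} to pass to the limit in the $L^2$ distance. This realization step is the main obstacle: the forward direction (every $\gamma$ gives a semi-coupling) is essentially a change-of-variables computation, whereas building a PL homeomorphism with prescribed area-redistribution on each face, while respecting the zeroth-row/column "destroy/create" slots (which correspond to mass of $S_1$ not covered by any image face, and mass of $S_2$ not in any preimage — handled by noting these regions can be mapped to vanishingly small collars), requires a genuine geometric construction plus a density/continuity argument to close the $\varepsilon$-gap.
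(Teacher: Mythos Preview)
Your overall two–inequality strategy is exactly the paper's, and your sketch of the reverse inequality (subdivide faces according to the entries of a semi-coupling, build a PL homeomorphism piece by piece on interior sub-triangles, and let a parameter tend to~$1$ so the uncontrolled ``collar'' contribution vanishes) is essentially what the paper carries out.

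The forward direction, however, contains a genuine error. Your ``key structural observation'' asserts that for a PL reparametrization $\gamma$ the squared $L^2$ distance \emph{equals} $\sum_{i,j}|\sqrt{A_{ij}}u_i-\sqrt{B_{ij}}v_j|^2$ for the semi-coupling $(A,B)$ recording the overlap areas. This is false: on the overlap region $M_{ij}$ one has
\[
\int_{M_{ij}} q_{f_1}\cdot(q_{f_2}\!\ast\!\gamma)
=(u_i\cdot v_j)\int_{M_{ij}}\sqrt{a}\,\sqrt{a\,m_{ij}},
\]
where $a$ is the area factor of $f_1$ and $m_{ij}$ the Jacobian of the PL homeomorphism $g=f_2\circ\gamma\circ f_1^{-1}$. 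Since $g$ need not be affine on $\sigma_{ij}$, $m_{ij}$ is not constant, and the last integral is in general strictly smaller than $\sqrt{A_{ij}B_{ij}}$. Equality does not hold; you only get the needed inequality by applying Cauchy--Schwarz, $\int_{M_{ij}}\sqrt{a}\sqrt{a m_{ij}}\le\sqrt{A_{ij}}\sqrt{B_{ij}}$, and this gives the right direction only when $u_i\cdot v_j\ge 0$. For pairs with $u_i\cdot v_j<0$ you must further modify the semi-coupling (move that mass into the zeroth column of $A$ and zeroth row of $B$, as in Lemma~\ref{ATilde}) so that the corresponding terms vanish; only then does the associated semi-coupling dominate $\int_M q_{f_1}\cdot(q_{f_2}\!\ast\!\gamma)$ and yield $d_{\mathcal S_{\operatorname{Lip}}}\ge\operatorname{WFR}$. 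Relatedly, for a genuine homeomorphism nothing is created or destroyed, so the zeroth row/column of the ``natural'' semi-coupling coming from $\gamma$ are identically zero---there is no ``plus the mass that is created/destroyed'' term on the homeomorphism side.
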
 
\begin{proof}
Assume that $S_1$ and $S_2$ are triangulated compact oriented PL surfaces in $\reals^3$, and let $f:M\to S_1$ be a PL parametrization of $S_1$. Let $\{\sigma_i\}_{i=1}^m$ denote the faces of $S_1$, and let $\{\tau_j\}_{j=1}^n$ denote the faces of $S_2$. For each $i$, assume that $\sigma_i$ has area $a_i$ and oriented unit normal vector $u_i$. For each $j$, assume that $\tau_j$ has area $b_j$ and  oriented unit normal vector $v_j$. Let $\mathcal{A}$ be the set of all discrete semi-couplings from $\psi(S_1)$ to $\psi(S_2)$. 

\begin{claim}\label{maintheoremclaim1} We may equivalently express the shape distance by
\begin{equation} d_{\mathcal S_{\operatorname{Lip}}}([f_1],[f_2])^2=\sum_{i=1}^ma_i+\sum_{j=1}^nb_j-2\sup_{g\in\operatorname{Hom}_{PL}^+(S_1,S_2)}\int_M \Phi(f_1)\cdot \Phi(g\circ f_1).
\end{equation}
\end{claim}

\noindent\textit{Proof of Claim \ref{maintheoremclaim1}: }Recall by Theorem~\ref{thm:PLequiv},
\begin{align}
    d_{\mathcal S_{\operatorname{Lip}}}([f_1],[f_2])^2&=\inf\limits_{\gamma\in\DiffPL(M)}\|\Phi(f_1)-\Phi(f_2\circ\gamma)\|^2_{L^2}\\
    &=\inf\limits_{\gamma\in\DiffPL(M)}\int\limits_{M} |\Phi(f_1)-\Phi(f_2\circ\gamma)|^2 dx\\
    &=\sum_{i=1}^ma_i+\sum_{j=1}^nb_j-2\sup\limits_{\gamma\in\DiffPL(M)}\int\limits_{M} |\Phi(f_1)\cdot \Phi(f_2\circ\gamma)|dx
\end{align}

Every $\gamma\in\DiffPL(M)$, corresponds to $g=f_2\circ \gamma \circ f_1^{-1}\in \operatorname{Hom}_{PL}^+(S_1,S_2)$. Thus,
\begin{equation}
d_{\mathcal S_{\operatorname{Lip}}}([f_1],[f_2])^2=\sum_{i=1}^ma_i+\sum_{j=1}^nb_j-2\sup_{g\in\operatorname{Hom}_{PL}^+(S_1,S_2)}\int_M \Phi(f_1)\cdot \Phi(g\circ f_1).
\end{equation}
This completes the proof of Claim \ref{maintheoremclaim1}. Now recall from Remark~\ref{reformulateSRMM}
\begin{equation}
    \operatorname{WFR}(\psi\circ\phi(f_1),\psi\circ\phi(f_2))^2=\sum_{i=1}^ma_i+\sum_{j=1}^nb_j-2\sup_{(A,B)\in\mathcal{A}}\sum_{i=1}^{m}\sum_{j=1}^{n}\sqrt{A_{ij}B_{ij}}(u_i\cdot v_j)  
\end{equation}
Therefore, showing $d_{\mathcal S_{\operatorname{Lip}}}([f_1],[f_2])=\operatorname{WFR}(\psi\circ\phi(f_1),\psi\circ\phi(f_2))$ is equivalent to showing \begin{equation}
    \sup_{g\in\operatorname{Hom}_{PL}^+(S_1,S_2)}\int_M \Phi(f_1)\cdot \Phi(g\circ f_1)dx=\sup_{(A,B)\in\mathcal{A}}\sum_{i=1}^{m}\sum_{j=1}^{n}\sqrt{A_{ij}B_{ij}}(u_i\cdot v_j).
\end{equation} 
\begin{claim}\label{ApproxHom}
 Assume that $(A,B)$ is a discrete semi-coupling from $\psi(S_1)$ to $\psi(S_2)$. Then for all $\epsilon>0$ there is a PL homeomorphism $g:S_1\to S_2$ such that 
\end{claim}
\begin{equation}\left|\int_M q_{f}\cdot q_{g\circ f}-\sum_{i,j}\sqrt{A_{ij}B_{ij}}(u_i\cdot v_j)\right|<\epsilon.\end{equation}

\noindent\textit{Proof of Claim \ref{ApproxHom}: }Let $(A,B)$ be a discrete semi-coupling from $\psi(S_1)$ to $\psi(S_2)$ such that for each $1\leq i\leq m$ and $1\leq j\leq n$, $A_{ij},B_{ij}>0$. We will first prove the claim for this restricted case and extend it to all semi-couplings by continuity.

Let $a:M\to \mathbb{R}$ denote the area multiplication factor of $f$, and let $u:M\to S^2$ denote the unit normal vector function corresponding to $f$. First we choose a real number $r\in(0,1)$. For each $1\leq i\leq m$, subdivide $\sigma_i$ into $n$ smaller 2-simplexes $\sigma_{ij}$ such that each  $\sigma_{ij}$ has area $A_{ij}$. Similarly, for each $1\leq j\leq n$, subdivide $\tau_j$ into $m$ smaller 2-simplexes $\tau_{ij}$ such that each $\tau_{ij}$ has area $B_{ij}$. For each $1\leq i\leq m$ and $1\leq j\leq n$, choose a smaller 2-simplex $\tilde\sigma_{ij}$, whose closure is contained in the interior of $\sigma_{ij}$, such that $\tilde\sigma_{ij}$ has area equal to $rA_{ij}$. Similarly, for each $1\leq i\leq m$ and $1\leq j\leq n$, choose a smaller 2-simplex $\tilde\tau_{ij}$, whose closure is contained in the interior of $\tau_{ij}$, such that $\tilde\tau_{ij}$ has area equal to $rB_{ij}$.

We now construct an orientation preserving PL homeomorphism $g_r:S_1\to S_2$. First, for each $1\leq i\leq m$ and $1\leq j\leq n$, define $g_r:\tilde\sigma_{ij}\to\tilde\tau_{ij}$ to be an arbitrary PL orientation preserving homeomorphism with constant area multiplication factor. Note that 
$S_1-\left(\bigcup\limits_{i=1}^m\bigcup\limits_{j=1}^n\tilde\sigma_{ij}^{\mathrm{o}}\right)$ is homeomorphic to $S_2-\left(\bigcup\limits_{i=1}^m\bigcup\limits_{j=1}^n\tilde\tau_{ij}^{\mathrm{o}}\right)$. Hence, we can simply extend the homeomorphism $g_r$ which we already defined on the $\tilde\sigma_{ij}$'s to a homeomorphism $S_1\to S_2$ in an arbitrary manner.  Denote the unit normal function  coming from the parametrization $g_r\circ f$ of $S_2$ by $v_r:M\to S^2$. Denote the area multiplication factor of $g_r\circ f$ by $m_r:M\to\reals$.

Write $M=M_1\cup M_2$, where $M_1=f^{-1}\left(\bigcup\limits_{i=1}^m\bigcup\limits_{j=1}^n\tilde\sigma_{ij}\right)$ and $M_2=\overline{M-M_1}$. For each $1\leq i\leq m$ and $1\leq j\leq n$, let $M_{ij}=f^{-1}(\tilde\sigma_{ij})$. Note that on each $M_{ij}$, $m_r=\frac{aB_{ij}}{A_{ij}}$. Compute:
\begin{align}
\int_{M_1}q_f\cdot q_{g_r\circ f}& =\sum\limits_{i=1}^{m}\sum_{j=1}^{n}\int_{M_{ij}}\sqrt{a}\sqrt{m_r}(u_i\cdot v_j)
=\sum\limits_{i=1}^{m}\sum_{j=1}^{n}\int_{M_{ij}}\sqrt{a}\sqrt{\frac{aB_{ij}}{A_{ij}}}(u_i\cdot v_j)\\
&=\sum\limits_{i=1}^{m}\sum_{j=1}^{n}\int_{M_{ij}}a\sqrt{\frac{B_{ij}}{A_{ij}}}(u_i\cdot v_j)
=\sum\limits_{i=1}^{m}\sum_{j=1}^{n}\operatorname{area}(\tilde\sigma_{ij})\sqrt{\frac{B_{ij}}{A_{ij}}}(u_i\cdot v_j)\\
&=\sum\limits_{i=1}^{m}\sum_{j=1}^{n}\sqrt{\operatorname{area}(\tilde\sigma_{ij})}\sqrt{\frac{\operatorname{area}(\tilde\sigma_{ij})B_{ij}}{A_{ij}}}(u_i\cdot v_j)\\
&=\sum\limits_{i=1}^{m}\sum_{j=1}^{n}\sqrt{\operatorname{area}(\tilde\sigma_{ij})}\sqrt{\operatorname{area}(\tilde\tau_{ij})}(u_i\cdot v_j)\\
&=\sum\limits_{i=1}^{m}\sum_{j=1}^{n}\sqrt{rA_{ij}}\sqrt{rB_{ij}}(u_i\cdot v_j).
\end{align}
Meanwhile by the Schwarz inequality,
\begin{align}
    \left|\int_{M_2}q_f\cdot q_{g_r\circ f}\right|
    &=\left|\int_{M_2}\sqrt{a}\sqrt{m_r}(u\cdot v_r)\right|\leq\int_{M_2}\sqrt{a}\sqrt{m_r}\left| u\cdot v_r \right|\\
    &\leq\int_{M_2}\sqrt{a}\sqrt{m_r}\leq\sqrt{\int_{M_2}a}\sqrt{\int_{M_2}m_r}\\
    &=\sqrt{\mbox{area}\left(S_1-\left(\bigcup\limits_{i=1}^m\bigcup\limits_{j=1}^n\tilde\sigma_{ij}^{\mathrm{o}}\right)\right)}\sqrt{\mbox{area}\left(S_2-\left(\bigcup\limits_{i=1}^m\bigcup\limits_{j=1}^n\tilde\tau_{ij}^{\mathrm{o}}\right)\right)}\\
    &=\sqrt{(1-r)\operatorname{area}(S_1) }\sqrt{(1-r)\operatorname{area}(S_2) }.
\end{align}
So as we let $r\to 1$,
\begin{equation}
    \int_{M_1}q_f\cdot q_{g_r\circ f}\to\sum\limits_{i=1}^{m}\sum_{j=1}^{n}\sqrt{A_{ij}B_{ij}}(u_i\cdot v_j) \operatorname{ and }
\int_{M_2}q_f\cdot q_{g_r\circ f}\to 0.
\end{equation}
Hence, 
\begin{equation}
    \int_{M}q_f\cdot q_{g_r\circ f}\to\sum\limits_{i=1}^{m}\sum_{j=1}^{n}\sqrt{A_{ij}B_{ij}}(u_i\cdot v_j). 
\end{equation}Thus Claim \ref{ApproxHom} follows for the case in which for each $1\leq i\leq m$ and $1\leq j\leq n$, $A_{ij}>0$ and $B_{ij}>0$. The general case then follows immediately from the continuity of
\begin{equation}
    \sum\limits_{i=1}^{m}\sum_{j=1}^{n}\sqrt{A_{ij}B_{ij}}(u_i\cdot v_j)
\end{equation} as a function of $(A,B)$.
This completes the proof of Claim \ref{ApproxHom}. It follows that \begin{equation}
    \sup_{g\in\operatorname{Hom}_{PL}^+(S_1,S_2)}\int_M q_f\cdot q_{g\circ f}\geq\sup_{\{A_{ij}\},\{B_{ij}\}\in\mathcal{A}}\sum_{i=1}^{m}\sum_{j=1}^{n}\sqrt{A_{ij}B_{ij}}(u_i\cdot v_j).
\end{equation}
We are left to show the opposite inequality.
\begin{claim}\label{maintheoremclaim3}
 Assume $g$ is a PL-homeomorphism from $S_1$ to $S_2$, then there exists a discrete semi-coupling $(A,B)$ such that \begin{equation}\int_M q_f\cdot q_{g\circ f}\leq \sum_{i,j}\sqrt{A_{ij}}\sqrt{B_{ij}}(u_i\cdot v_j).\end{equation}
\end{claim}
\noindent\textit{Proof of Claim \ref{maintheoremclaim3}: }Let $g:S_1\to S_2$ be an orientation preserving PL homeomorphism. For $1\leq i\leq m$ and $1\leq j\leq n$, define $\sigma_{ij}=g^{-1}(\tau_j)\cap\sigma_i$ and define $\tau_{ij}=g(\sigma_{ij})$. Now define two $(m+1)\times (n+1)$ matrices $A$ and $B$ via:
\begin{itemize}
    \item For $1\leq i\leq m$ and $1\leq j\leq n$, $A_{ij}=\operatorname{area}(\sigma_{ij})$ and $B_{ij}=\operatorname{area} (\tau_{ij})$.
    \item For $0\leq j\leq n$, $A_{j0}=0$. 
    \item For $0\leq i\leq m$, $B_{0i}=0$.
    \item For $0\leq i\leq m$, \begin{equation}A_{i0}=a_i-\sum\limits_{j=1}^{n}\operatorname{area}(\sigma_{ij}).\end{equation}
    \item For $0\leq j\leq n$, \begin{equation}B_{0j}=b_j-\sum\limits_{i=1}^{m}\operatorname{area} (\tau_{ij}).\end{equation}
\end{itemize}

\noindent The pair of matrices $(A,B)$ is a discrete semi-coupling from $\psi(S_1)$ to $\psi(S_2)$ by construction. We say that $(A,B)$ is the semi-coupling \textit{ corresponding} to the homeomorphism $g$. Let $1\leq i\leq m$ and $1\leq j\leq n$ and let $M_{ij}=f^{-1}(\sigma_{ij})\subset M$. Denote the area multiplication factor of $g$ on $\sigma_{ij}$ by $m_{ij}$. Then by the Schwarz inequality,
\begin{align}
    \int_{M_{ij}}q_f\cdot q_{g\circ f}&=\int_{M_{ij}}\sqrt{a(x)}u_i\cdot \sqrt{a(x)m_{ij}(x)}v_j\,dx=\int_{M_{ij}}\sqrt{a(x)}\sqrt{a(x)m_{ij}(x)}\,dx(u_i\cdot v_j)\\
    &\leq\sqrt{\int_{M_{ij}}a(x)\,dx  \int_{M_{ij}}a(x)m_{ij}(x)\,dx} (u_i\cdot v_j)=\sqrt{A_{ij}}\sqrt{B_{ij}}(u_i\cdot v_j)
\end{align}
Summing over all $i$ and $j$ we obtain:
\begin{align}
    \int_M q_f\cdot q_{g\circ f}&=\sum_{i,j}\int_{M_{ij}} q_f\cdot q_{g\circ f}=\sum_{i,j}\int_{M_{ij}}a(x)m(x) (u_i\cdot v_j)\leq \sum_{i,j}\sqrt{A_{ij}}\sqrt{B_{ij}}(u_i\cdot v_j).
\end{align}

This completes the proof of Claim \ref{maintheoremclaim3}. It follows that,
\begin{equation}\sup_{g\in\operatorname{Hom}_{PL}^+(S_1,S_2)}\int_M q_f\cdot q_{g\circ f}\leq\sup_{\{A_{ij}\},\{B_{ij}\}\in\mathcal{A}}\sum_{i=1}^{m}\sum_{j=1}^{n}\sqrt{A_{ij}B_{ij}}(u_i\cdot v_j).\end{equation}
and thus the theorem is proved.
\end{proof}

\begin{remark}
In this section we have defined a mapping from the shape space of PL surfaces in $\mathbb R^3$ to the space of finitely supported measures on $S^2$. We have then shown that the SRNF (pseudo-) distance between two surfaces is equal to the WFR distance between the two corresponding measures. It is shown in \cite{chizat2018interpolating,chizat2018unbalanced,liero2018optimal} that the space of finitely supported measures on $S^2$ is a geodesic length space. One might hope that geodesics in this space could somehow be ``lifted" to geodesics in the space of PL surfaces. The main problem with this plan is that there is an infinite-dimensional space of surfaces corresponding to each measure; see \cite{klassen2020closed} for examples of arbitrarily high dimensional spaces of surfaces corresponding to a single measure. Hence, there is no unique way of lifting geodesics in the space of measures to the space of surfaces. Because of this degeneracy, which is inherent to the SRNF, there is no direct way to define a ``geodesic" in the space of surfaces with respect to the SRNF distance function. While it is true that  existing methods (involving gradient searches over the reparametrization group; see for example \cite{jermyn2017elastic}) do result in plausible-looking deformations from one surface to another, these deformations are not geodesics in any strict mathematical sense. Thus in order to define geodesics formally in the space of surfaces, one would have to resolve the degeneracy of the SRNF distance function by, for example, adding extra terms to the SRNF metric, see e.g.~\cite{jermyn2012elastic,su2020shape}.

The mapping we introduced does, however, restrict to a bijection between the space of \textit{ closed convex} PL surfaces in $\mathbb R^3$ and a subspace of the finitely supported measures on $S^2$. Therefore it would be possible to define geodesics in the space of closed convex surfaces as lifts of geodesics in this subspace; this was done recently for the space of convex curves in \cite{charon2020length}. %We have, however, not explored this in the current article.
\end{remark}

\begin{remark}
Using the mapping that we have defined from the shape space of surfaces in ${\mathbb R}^3$ to measures on $S^2$, we can pull back any distance function on the space of measures to obtain a pseudo-distance function on the shape space of surfaces. While the main purpose of this paper is to show that the SRNF pseudo-distance is obtained in this manner from the WFR distance on the space of measures, it might be of interest to use other distance functions on the space of measures to obtain interesting pseudo-distances on the space of surfaces. A likely candidate for this would be the $\rho$-parametrized WFR distance function defined in \cite{chizat2018unbalanced}. The corresponding pseudo-distance on surfaces would have a natural interpretation as assigning different weights to the direction of the normal vector as opposed to the shrinking or expansion of area on the surfaces. As explained in Section 2, the algorithm introduced in Section 2 would also provide a computation of this generalized version of the SRNF pseudo-distance. Note that any pseudo-distance on surfaces obtained in this way would have the same degeneracy as the SRNF pseudo-distance.
\end{remark}

\subsection{SRNF Computation Experiments}
By Theorem~\ref{thm:PLequiv}, we can utilize Algorithm~\ref{alg:WFR_iter} to compute the exact SRNF pseudo-distance directly between two simplicial meshes. The resulting method is described in Algorithm~\ref{alg:SRNF}.

\begin{algorithm}
\caption{Calculate $\operatorname{SRNF}$ shape distance\label{alg:SRNF}}
\begin{algorithmic} 
\Procedure{SRNF\_distance}{$S_1,S_2,\epsilon$}

\State $u_i\gets$ the unit normal of the $i$th face of $S_1$  
\State $a_i\gets$ the area of the $i$th face of $S_1$ 
\State $v_j\gets$ the unit normal of the $i$th face of $S_2$ 
\State $b_j\gets$ the area of the $i$th face of $S_2$ 
\State \Return \Call{WFR\_distance}{$a,b,u,v,\epsilon$}
\EndProcedure
\end{algorithmic}
\end{algorithm}
 
To quantify the performance of Algorithm~\ref{alg:SRNF} we compare it to the method introduced in ~\cite{laga2017numerical}. In their implementation surfaces are assumed to be represented by a spherical parametrization and the diffeomorphism group is discretized using spherical harmonics. 
This in turn allows one to formulate the SRNF distance computation as a constrained minimization problem over the coefficients of the reparametrization in the chosen spherical harmonics basis. In the following, we will refer to this method as the parametrization-based method.

Most data that one encounters in real applications is, however, not given in such a parametrized form but rather as a simplicial complex. Thus one first has to solve the parametrization problem ~\cite{sheffer2007mesh}, 
which is of comparable difficulty to the geodesic boundary problem itself. In our experiments, we used 24 shapes from the TOSCA dataset (simplicial meshes), for which we also had access to spherical parametrizations, which have been calculated using the method of~\cite{praun2003spherical} as implemented in~\cite{kurtek2013landmark}. We present the result of this comparison in Figure~\ref{fig:Corr} which consists of three subplots: Figure~\ref{fig:Corr} (a), which highlights the distances computed with both methods for 552 pairs of PL surfaces. The precise distances produced by our method (Orange) are consistently lower than the parametrization-based distances produced by the method of ~\cite{laga2017numerical} (Blue). The mean relative error of the parametrization-based method compared to our method is $174.582\%$ with a standard deviation of $90.077\%$. Note that this error consists of both an approximation error of the spherical parameterization of the simplicial complex and an approximation error of the optimal reparametrization.  In Figure~\ref{fig:Corr} (b), we plot the correlation between these two methods of computing the distances which have a Pearson correlation coefficient of 0.793. Note that this is comparable to the correlation of elastic distances between functions on the line that are either computed using dynamic programming or computed using an exact algorithm, cf.~\cite{hartman2021supervised,srivastava2016functional,lahiri2015precise}.

\begin{figure}[ht!]
\centering
\begin{minipage}{.45\textwidth}
\centering
\includegraphics[width=\textwidth]{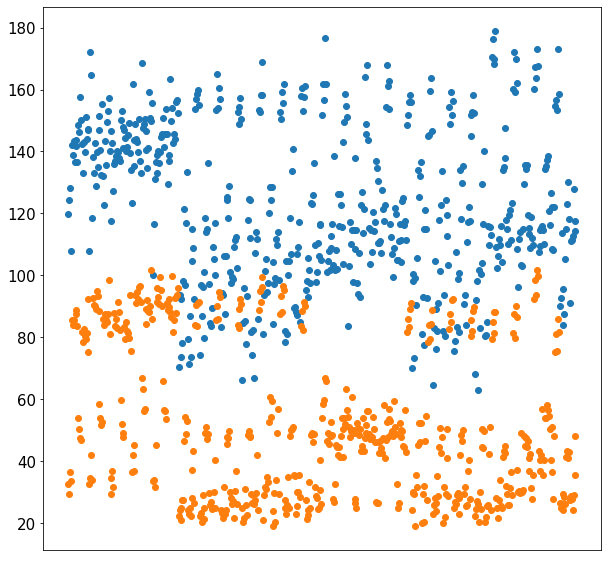}\\
\small(a) SRNF shape distances produced by Algorithm~\ref{alg:SRNF} (Orange) and by the method of ~\cite{laga2017numerical} (Blue). \\
\end{minipage}
\begin{minipage}{.45\textwidth}
\centering
\includegraphics[width=\textwidth]{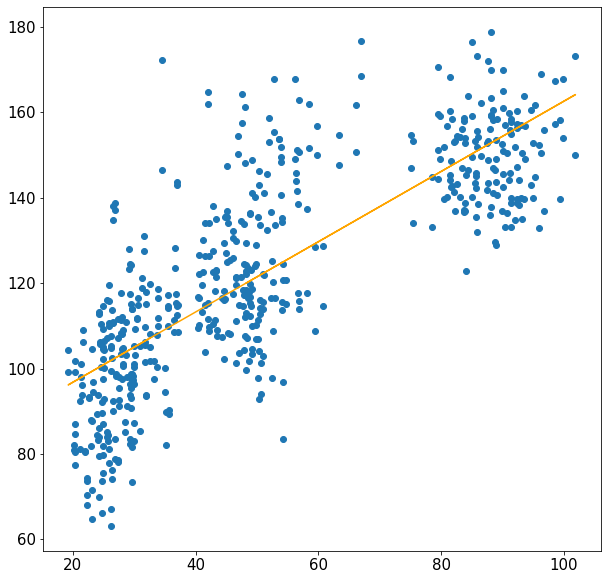}\\
\small(b) Correlation plot of distances from the method of ~\cite{laga2017numerical} and Algorithm~\ref{alg:SRNF}.\vspace{\baselineskip}
\end{minipage}
\caption{Comparison of Algorithm~\ref{alg:SRNF} and the method of ~\cite{laga2017numerical} for 552 pairs of surfaces from the TOSCA dataset.The shape distances used in this experiment are sampled from pairs of shapes from the TOSCA data set as displayed in Figure \ref{fig:MDS}.}
\label{fig:Corr}
\end{figure}

The main drawback of our method is that it does not produce an optimal reparameterization that aligns the two surfaces, i.e., we do not obtain point correspondences between the two meshes.  We can, however, still interpret the information in the optimal semi-coupling to visualize a ``fuzzy" correspondence between the surfaces. This is visualized in Figure~\ref{fig:correspo}:
given two PL surfaces $S_1$ and $S_2$, we color each face of $S_2$ according to its unit normal vector. Further let $(A,B)$ be the optimal semi-coupling between $\psi(S_1)$ and $\psi(S_2)$. We then color each face of $S_1$ with normal vector $u_i$ according to the color of the face of $S_2$ with normal vector $v_k$ where $k=\argmax_j A_{ij}$, i.e., each face on $S_1$ is colored by the same color as the face where most of its mass is transported. Examples of such correspondences are presented in Figure~\ref{fig:correspo}.

\begin{figure}[ht!]
\centering
\begin{tabular}{| c c c c c|c|}
\hline
&&$S_1$&&&$S_2$\\

\includegraphics[width=0.15\textwidth]{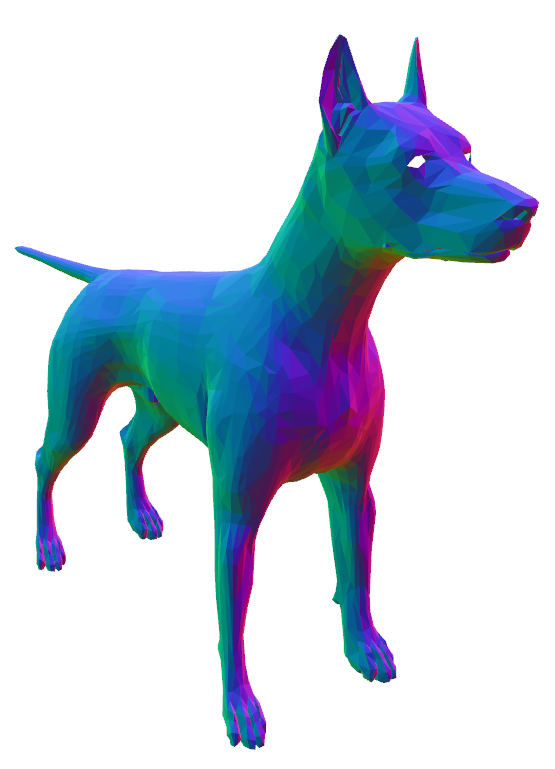}&
\includegraphics[width=0.15\textwidth]{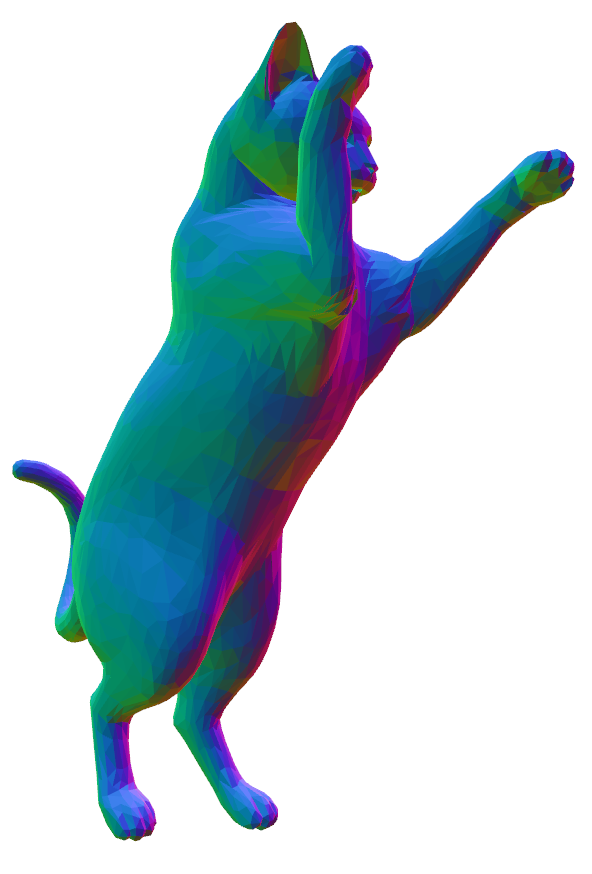} &&
\includegraphics[width=0.15\textwidth]{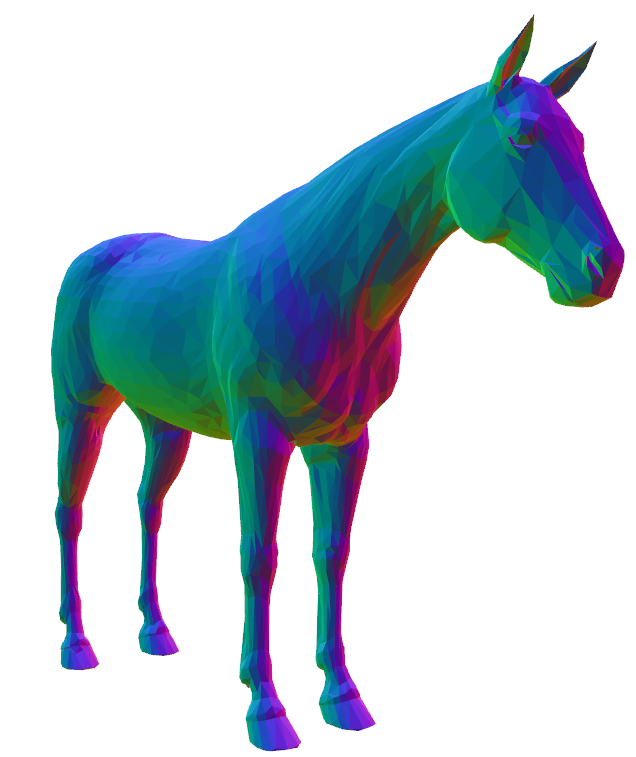}&
\includegraphics[width=0.15\textwidth]{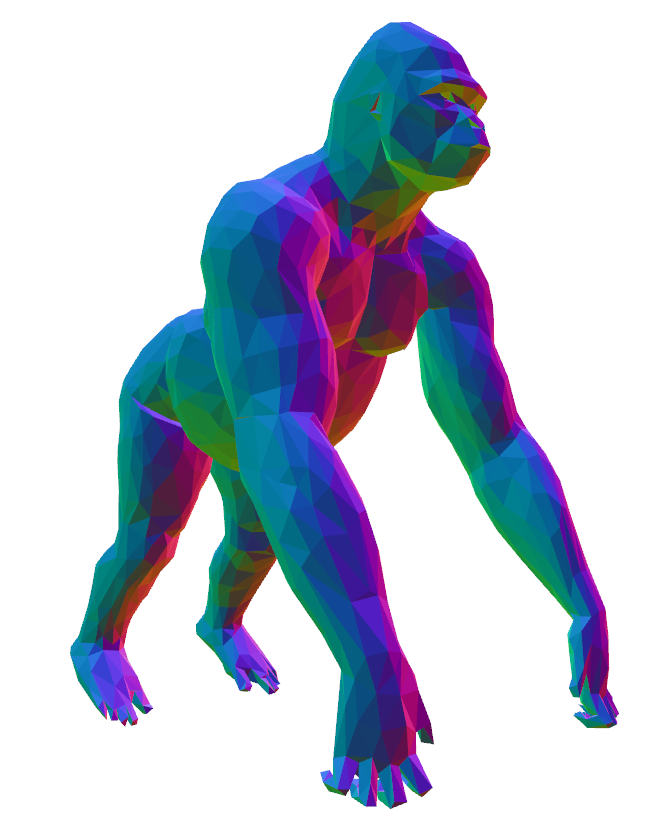}&
\includegraphics[width=0.25\textwidth]{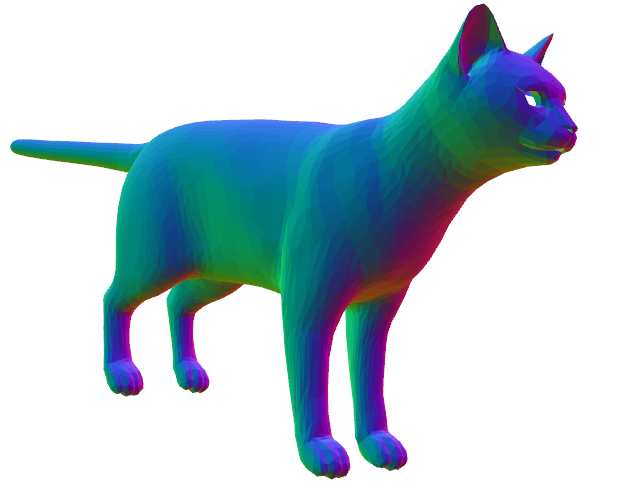} \\
\hline
\end{tabular}

\caption{Fuzzy Point Correspondences: Each face of the source surface ($S_2$) on the right is colored according to its normal vector. The faces of the target surfaces ($S_1$) on the left are colored according to the optimal discrete semi-coupling from $\psi(S_1)$ to $\psi(S_2)$.}\label{fig:correspo}
\end{figure}

The phenomenon of distinct closed surfaces that were indistinguishable by the SRNF shape pseudo-distance was first studied in ~\cite{klassen2020closed}. As a result of Theorem ~\ref{thm:PLequiv}, we obtain a full characterization of this phenomenon: Note that the Wasserstein-Fisher-Rao distance is a true distance and so the SRNF distance between two surfaces is zero if and only if they are mapped to the same measure by $\psi$. A useful observation from the study of convex polyhedra, due to Minkowski, is that every measure on $S^2$ satisfying the closure condition corresponds to a unique (up to translation) closed, convex polyhedron in $\mathbb{R}^3$, see e.g.~\cite{schneider1993convex}. Therefore, each closed PL surface has SRNF distance zero from this unique closed convex polyhedron. In Figure ~\ref{fig:ex_conv}, we give examples of PL surfaces and the convex polyhedron reconstructed from the corresponding measure. This reconstruction is performed using the Python package \textit{polyhedrec}~\cite{sellaroli2017algorithm} available at \href{https://github.com/gsellaroli/polyhedrec}{https://github.com/gsellaroli/polyhedrec}.

\begin{figure}[ht!]
\centering
\begin{tabular}{|C{3.2cm}|C{3.2cm}
|C{3.2cm}|C{3.2cm}|}
\hline
\phantom{h}&&&\\
    \includegraphics[height=0.16
\textwidth]{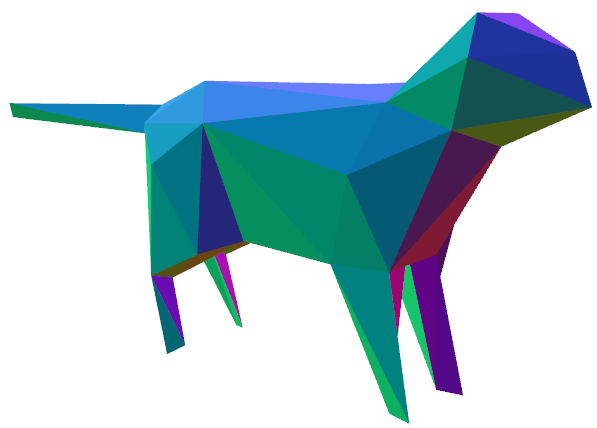}&
    \scalebox{-1}[1]{ \includegraphics[height=0.16
\textwidth]{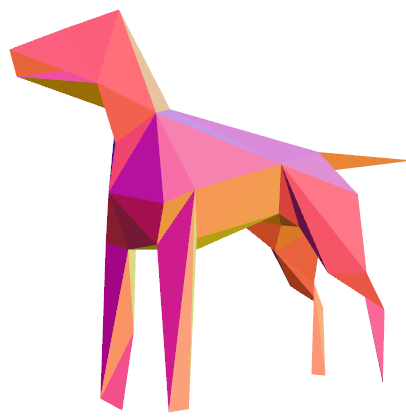}}&
    \includegraphics[height=0.16
\textwidth]{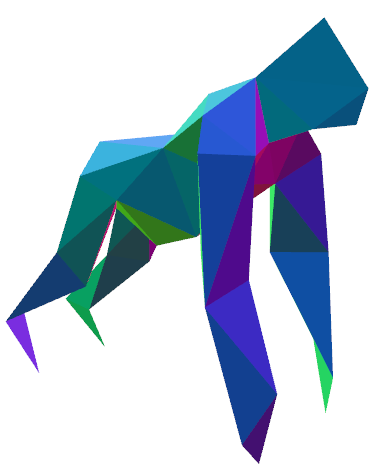}&
    \includegraphics[height=0.16
\textwidth]{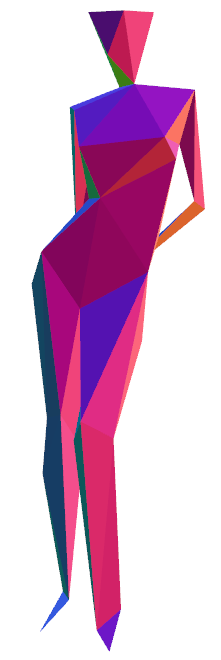}\\
     $\wr$&$\wr$&$\wr$&$\wr$\\
    \includegraphics[height=0.16
\textwidth]{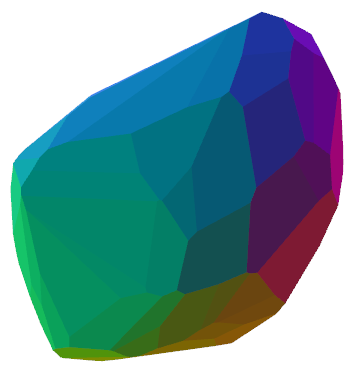}&
    \scalebox{-1}[1]{ \includegraphics[height=0.16
\textwidth]{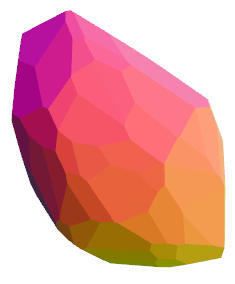}} &
    \includegraphics[height=0.16
\textwidth]{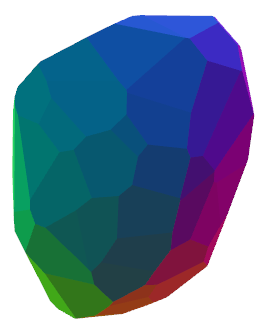} &
    \includegraphics[height=0.16
\textwidth]{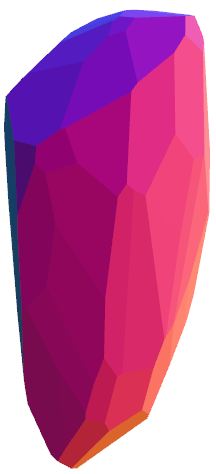}\\
\hline
\end{tabular}
\caption{Four pairs of distinct shapes indistinguishable by the SRNF metric: Each shape on the bottom row is the unique closed, convex polyhedron reconstructed from the area measure associated with the corresponding shape on the top row. Thus the SRNF shape distance between each of these pairs is zero. }
\label{fig:ex_conv}
\end{figure}

Despite these known drawbacks, the SRNF pseudo-metric has been demonstrated to be successful in applications surrounding the classification of surfaces. We demonstrate this by considering a toy example of shapes from the TOSCA dataset, that includes 4 cats, 7 dogs, 17 gorillas, 10 horses, and 9 lionesses. We then compute the SRNF pseudo-distance matrix using our Algorithm~\ref{alg:SRNF} and visualize the results using 3d multi-dimensional scaling in Figure~\ref{fig:MDS}. One can see that the SRNF distance, despite its degeneracy, produces meaningful clusters. 

\begin{figure}[ht!]
\centering
\begin{minipage}{.8\textwidth}
\centering
\includegraphics[width=1\textwidth]{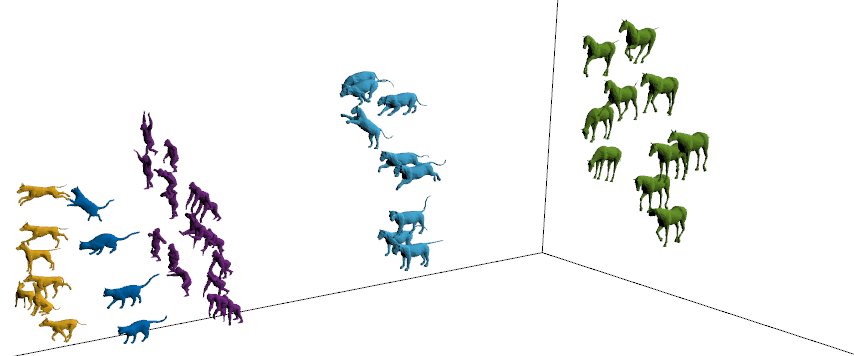}
\end{minipage}
\caption{Multi-Dimensional Scaling Plot: A subset of the TOSCA dataset is plotted according to 3d  multi-dimensional scaling of the SRNF distance matrix produced by Algorithm ~\ref{alg:SRNF}.}
\label{fig:MDS}
\end{figure}

\section{Conclusion}
In this article we propose a novel method to \emph{precisely} compute the SRNF shape distance between PL surfaces. This method follows from three results each of which has interesting implications in its own right:

First, we propose a novel method for computing the Wasserstein-Fisher-Rao distance in unbalanced optimal transport. While fast estimations of this metric can be achieved by including an entropy regularization term, we propose a new method that solves exactly for the WFR distance.

Second, we extend the SRNF framework to surfaces parameterized by Lipschitz immersions. This class of surfaces notably includes both smooth surfaces and PL surfaces. We then show that this extended framework is consistent with the original SRNF distance which was formulated in the smooth category. 

Finally, we establish an equivalence between the SRNF and the Wasserstein-Fisher-Rao distance on the space of Borel measures on $S^2$. In addition to establishing a new method to compute the SRNF shape distance, this result offers insight into theoretical problems that exist surrounding the SRNF shape distance. For instance, this result gives us tools to analyze the phenomenon of distinct closed surfaces that are indistinguishable by the SRNF shape pseudo-distance as highlighted in Figure~\ref{fig:ex_conv}.

\noindent\textbf{Open Questions and Future Work:} This project provides several open questions and opportunities for future work. The first set of open questions concerns Algorithm~\ref{alg:WFR_iter}:
in all of Section~\ref{algorithm}, we consider specifically the Wasserstein-Fisher-Rao distance between measures supported on $S^2$.  An obvious open problem is to investigate whether the results of Section~\ref{algorithm} and the associated algorithm can be generalized to a larger class of domains and general unbalanced OMT problems. Extending this algorithm to more general domains should follow easily from alternate characterizations of the distance developed in \cite{chizat2018scaling,chizat2018unbalanced, liero2018optimal}. Generalizing the algorithm to other unbalanced OMT problems may be more challenging and require restricting to unbalanced OMT problems that only optimize cost functions satisfying certain properties. There are also possibilities for future work in making our algorithm more efficient, by implementing sparse data types and/or absorption thresholds as entries of the discrete semi-coupling approach zero. Further, as future work, we would like to update our implementation to work with more general weight matrices so that our algorithm can be effectively used for other applications of unbalanced optimal transport.

The main result of this paper gives us new tools for studying the SRNF shape distance. Currently, we have only proven this result for PL shapes and we leave the extension of Theorem~\ref{thm:maintheorem} to all surfaces in $\mathcal{S}_{\operatorname{Lip}}$ as an open problem. We expect that this result could follow from Theorem~\ref{thm:maintheorem} using the density of PL surfaces in $\mathcal{S}_{\operatorname{Lip}}$ with respect to the SRNF pseudo-metric and carefully studying the continuity of all involved operations. Further, there are opportunities for future work in characterizing the relationships between shapes that are indistinguishable via the SRNF shape distance. Understanding this relationship may serve to help in developing meaningful SRNF based interpolations between shapes based on optimal discrete semi-couplings between the associated measures.

Another interesting subject for future research is to gain a better understanding of the set of all shapes of closed surfaces corresponding to a given measure on $S^2$. It seems clear that this set is, in general, infinite dimensional. See \cite{klassen2020closed} for a construction of arbitrarily high dimensional sets of this form. Consequently there are several natural questions that arise: Does the geometry of this set depend on the given measure on $S^2$? Can the shape space of surfaces be thought of as a fiber bundle over the space of measures on $S^2$?

\newpage
\bibliographystyle{plain}

\end{document}